\newcommand{\bal}{\bar{\lambda}}
\newcommand{\ubal}{\underline{\lambda}}
\newcommand{\bP}{\mathbf{P}}
\newcommand{\bJ}{\mathbf{J}}
\newcommand{\bF}{\mathbf{F}}
\newcommand{\bx}{\mathbf{x}}
\newcommand{\bSig}{\boldsymbol{\Sigma}}
\newcommand{\bsig}{\boldsymbol{\sigma}}
\newcommand{\bT}{\mathbf{T}}
\newcommand{\bz}{\mathbf{z}}
\newcommand{\ba}{\mathbf{a}}
\newcommand{\bW}{\mathbf{W}}
\newcommand{\bM}{\mathbf{M}}
\newcommand{\bb}{\mathbf{b}}
\newcommand{\bB}{\mathbf{B}}
\newcommand{\bv}{\mathbf{v}}
\newcommand{\Exp}{\mathbb{E}}
\newcommand{\bThet}{\boldsymbol{\Theta}}
\newcommand{\txb}{\tilde{\mathbf{x}}}
\newcommand{\tyb}{\tilde{\mathbf{y}}}
\newcommand{\comb}{\mathbf{x}^\bullet}
\newcommand{\com}{x^\bullet}
\newcommand{\comy}{y^\bullet}
\newcommand{\comyb}{\by^\bullet}
\newcommand{\bH}{\mathbf{H}}
\newcommand{\tlim}{\lim_{t\rightarrow \infty}}
\newcommand{\bg}{\mathbf{g}}
\newcommand{\bA}{\mathbf{A}}
\newcommand{\bI}{\mathbf{I}}
\newcommand{\by}{\mathbf{y}}
\newcommand{\bu}{\mathbf{u}}
\newcommand{\bQ}{\mathbf{Q}}
\newcommand{\bzeta}{\boldsymbol{\zeta}}
\newcommand{\beps}{\boldsymbol{\epsilon}}
\newtheorem{thm}{Theorem}[section]
\newtheorem{lem}{Lemma}[section]
\newtheorem{cor}{Corollary}[section]
\theoremstyle{remark}
\newtheorem{rmk}{Remark}[section]
\theoremstyle{definition}
\newtheorem{definition}{Definition}[section]
\theoremstyle{definition}
\newtheorem{assumption}{Assumption}[section]
\title{A continuous-time analysis of distributed stochastic gradient}
\author{%
  Nicholas M. Boffi\\
  John A. Paulson School of Engineering and Applied Sciences\\
  Harvard University\\
  Cambridge, MA 02138\\
  \texttt{boffi@g.harvard.edu} \\
   \And
   Jean-Jacques E. Slotine\\
   Nonlinear Systems Laboratory\\
   Massachusetts Institute of Technology\\
   Cambridge, MA 02139\\
   \texttt{jjs@mit.edu}
}
\begin{document}
\maketitle

\begin{abstract}
    We analyze the effect of synchronization on distributed stochastic gradient algorithms. By exploiting an analogy with dynamical models of biological quorum sensing -- where synchronization between agents is induced through communication with a common signal -- we quantify how synchronization can significantly reduce the magnitude of the noise felt by the individual distributed agents and by their spatial mean. This noise reduction is in turn associated with a reduction in the smoothing of the loss function imposed by the stochastic gradient approximation. Through simulations on model non-convex objectives, we demonstrate that coupling can stabilize higher noise levels and improve convergence.  We provide a convergence analysis for strongly convex functions by deriving a bound on the expected deviation of the spatial mean of the agents from the global minimizer for an algorithm based on quorum sensing, the same algorithm with momentum, and the Elastic Averaging SGD (EASGD) algorithm. We discuss extensions to new algorithms that allow each agent to broadcast its current measure of success and shape the collective computation accordingly. We supplement our theoretical analysis with numerical experiments on convolutional neural networks trained on the CIFAR-10 dataset, where we note a surprising regularizing property of EASGD even when applied to the non-distributed case. This observation suggests alternative second-order in-time algorithms for non-distributed optimization that are competitive with momentum methods.
\end{abstract}

\section{Introduction}
\label{sec:intro}
Stochastic gradient descent (SGD) and its variants have become the de-facto algorithms for large-scale machine learning applications such as deep neural networks \citep{bottou-2010, Goodfellow-et-al-2016, LeCun2015, Mallat2016}. SGD is used to optimize finite-sum loss functions, where a stochastic approximation to the gradient is computed using only a random selection of the input data points. Well-known results on almost-sure convergence rates to global minimizers for strictly convex functions and to stationary points for non-convex functions exist under sufficient regularity conditions \citep{Bottou1999, Robbins1971}. Classic work on iterate averaging for SGD \citep{pol-jud} and other more recent extensions \citep{Bach1, Bach2, Bach3, Bach4} can improve convergence under a set of reasonable assumptions typically satisfied in the machine learning setting. Convergence proofs rely on a suitably chosen decreasing step size; for constant step sizes and strictly convex functions, the parameters ultimately converge to a distribution peaked around the optimum.

For large-scale machine learning applications, parallelization of SGD is a critical problem of significant modern research interest \citep{large-scale-dist, Recht2013, hogwild, Soatto3}. Recent work in this direction includes the Elastic Averaging SGD (EASGD) algorithm, in which $p$ distributed agents coupled through a common signal optimize the same loss function. EASGD can be derived from a single SGD step on a global variable consensus objective with a quadratic penalty, and the common signal takes the form of an average over space and time of the parameter vectors of the individual agents \citep{EASGD, boyd2010}. At its core, the EASGD algorithm is a system of identical, coupled, discrete-time dynamical systems. And indeed, the EASGD algorithm has exactly the same structure as earlier mathematical models of synchronization~\citep{russo-slot,soon-jo} inspired by \emph{quorum sensing} in bacteria~\citep{bass1, bass2}. In these models, which have typically been analyzed in continuous-time, 
the dynamics of the common (quorum) signal can be arbitrary \citep{russo-slot}, and in fact may simply consist of a weighted average of individual signals. Motivated by this immediate analogy, we present here a continuous-time analysis of distributed stochastic gradient algorithms, of which EASGD is a special case. A significant focus of this work is the interaction between the degree of synchronization of the individual agents, characterized rigorously by a bound on the expected distance between all agents and governed by the coupling strength, and the amount of noise induced by their stochastic gradient approximations.

The effect of coupling between identical continuous-time dynamical systems has a rich history. In particular, synchronization phenomena in such coupled systems have been the subject of much mathematical \citep{Wang2005}, biological \citep{russo-slot}, neuroscientific \citep{tab-slot}, and physical interest \citep{PhysRevE.78.011108}. In nonlinear dynamical systems, synchronization has been shown to play a crucial role in \emph{protection} of the individual systems from independent sources of noise \citep{tab-slot}. The interaction between synchronization and noise has also been posed as a possible source of regularization in biological learning, where quorum sensing-like mechanisms could be implemented between neurons through local field potentials \citep{bouv-slot}. Given the significance of stochastic gradient \citep{Zhang2018} and externally injected \citep{grad-noise} noise in regularization of large-scale machine learning models such as deep networks \citep{understanding-DL}, it is natural to expect that the interplay between synchronization of the individual agents and the noise from their stochastic gradient approximations is of central importance in distributed SGD algorithms.

Recently, there has been renewed interest in a continuous-time view of optimization algorithms \citep{Wilson2016, Wibisono2016, Wibisono2015, Betancourt2018}. Nesterov's accelerated gradient method \citep{nest-mom} was fruitfully analyzed in continuous-time in \citet{Su2015}, and a unifying extension to other algorithms can be found in \citet{Wibisono2016}. Continuous-time analysis has also enabled \emph{discrete-time} algorithm development through classical discretization techniques from numerical analysis \citep{ZhangRK}. This paper further adds to this line of work by deriving new results with the mathematical tools afforded by the continuous-time view, such as stochastic calculus and nonlinear contraction analysis \citep{Lohmiller-Slot}.

The paper is organized as follows. In Sec.~\ref{sec:prelim}, we provide some necessary mathematical preliminaries: a review of SGD in continuous-time, a continuous-time limit of the EASGD algorithm, a review of stochastic nonlinear contraction theory, and a statement of some needed assumptions. In Sec.~\ref{sec:sync_noise}, we demonstrate that the effect of synchronization of the distributed SGD agents is to reduce the magnitude of the noise felt by each agent and by their spatial mean. We derive this for an algorithm where all-to-all coupling is implemented through communication with the spatial mean of the distributed parameters, and we refer to this algorithm as quorum SGD (QSGD). In the appendix, a similar derivation is presented with arbitrary dynamics for the quorum variable, of which EASGD is a special case. In Sec.~\ref{sec:kleinberg}, we connect this noise reduction property with a recent analysis in \citet{kleinberg}, which shows SGD can be interpreted as performing gradient descent on a smoothed loss in expectation. We use this derivation to garner intuition about the qualitative performance of distributed SGD algorithms as the coupling strength is varied, and we verify this intuition with simulations on model nonconvex loss functions in low and high dimensions. In Sec.~\ref{sec:conv}, we provide new convergence results for QSGD, QSGD with momentum, and EASGD for a strongly convex objective. In Sec.~\ref{sec:dnn}, we explore the properties of EASGD and QSGD on deep neural networks, and in particular, test the stability and performance of variants proposed throughout the paper. We also propose a new class of second-order in time algorithms motivated by the EASGD algorithm with a single agent, which consists of standard SGD coupled in feedback to the output of a nonlinear filter of the parameters. We close with some concluding remarks in Sec.~\ref{sec:conc}.

\section{Mathematical preliminaries}
\label{sec:prelim}
\noindent In this section, we provide a brief review of the necessary mathematical tools employed in this work.
\subsection{Convex optimization}
For the convergence proofs in Sec.~\ref{sec:conv}, and for synchronization of momentum methods, we will require a few standard definitions from convex optimization.
\begin{definition}{(Strong Convexity)}
A function $f \in \mathcal{C}^2(\mathbb{R}^n, \mathbb{R})$ is $l$-strongly convex with $l > 0$ if its Hessian is uniformly lower bounded by $l \bI$ with respect to the positive semidefinite order, $\nabla^2 f(\bx) > l \bI$ for all $\bx \in \mathbb{R}^n$.
\end{definition}
\begin{definition}{(L-Smoothness)}
A function $f \in \mathcal{C}^2(\mathbb{R}^n, \mathbb{R})$ is $L$-smooth with $L > 0$ if its Hessian is uniformly upper bounded by $L \bI$ with respect to the positive semidefinite order, $\nabla^2 f(\bx) < L \bI$ for all $\bx \in \mathbb{R}^n$.
\end{definition}
\subsection{Stochastic gradient descent in discrete-time}
\label{ssec:sgd}
Minibatch SGD has been essential for training large-scale machine learning models such as deep neural networks, where empirical risk minimization leads to finite-sum loss functions of the form 
\begin{equation}
    f(\bx) = \frac{1}{N}\sum_{i=1}^N l(\bx, \by^i)\nonumber.
\end{equation}
Above, $\by^i$ is the $i^{th}$ input data example and the vector $\bx$ holds the model parameters. In the typical machine learning setting where $N$ is very large, the gradient of $f$ requires $N$ gradient computations of $l$, which is prohibitively expensive. 

To avoid this calculation, a stochastic gradient is computed by taking a random selection $\mathcal{B}$ of size $b < N$, typically known as a minibatch. It is simple to see that the stochastic gradient
\begin{equation}
    \hat{\mathbf{g}}(\bx) = \frac{1}{b}\sum_{\by \in \mathcal{B}}\nabla l(\bx, \by)\nonumber
\end{equation}
is an unbiased estimator of the true gradient. The parameters are updated according to the iteration
\begin{equation}
    \bx_{t+1} = \bx_t - \eta \hat{\mathbf{g}}(\bx)\nonumber.
\end{equation}
By adding and subtracting the true gradient, the SGD iteration can be rewritten
\begin{equation}
    \bx_{t+1} = \bx_t - \eta \nabla f(\bx_t) - \frac{\eta}{\sqrt{b}} \bzeta_t,
    \label{eqn:sgd_disc}
\end{equation}
where $\bzeta_t \sim N(0, \bSig(\bx_t))$ is a data-dependent noise term. $\bzeta_t$ can be taken to be Gaussian under a central limit theorem argument, assuming that the size of the minibatch is large enough \citep{3fac, Mandt-cont}. $\bSig(\bx)$ is then given by the variance of a single-element stochastic gradient
\begin{equation}
    \bSig(\bx) = \frac{1}{N}\sum_{i=1}^N\left[\left(\nabla l\left(\bx, \by^i\right) - \nabla f\left(\bx\right)\right)\left(\nabla l\left(\bx, \by^i\right) - \nabla f\left(\bx\right)\right)^T\right]\nonumber.
\end{equation}

\subsection{Stochastic gradient descent in continuous-time}
\label{ssec:sgd_cont}
A significant difficulty in a continuous-time analysis of SGD is formulating an accurate stochastic differential equation (SDE) model. Recent works have proved rigorously \citep{Hu2017a, Feng2018, Li2018} that the sequence of values $\bx(k\eta)$ generated by the SDE
\begin{equation}
    d\bx = \left(-\nabla f(\bx) - \frac{1}{4}\eta\nabla\left\Vert\nabla f(\bx)\right\Vert^2\right)dt + \sqrt{\frac{\eta}{b}}\mathbf{B}(\bx)d\bW\nonumber
\end{equation}
approximates the SGD iteration with weak error $\mathcal{O}(\eta^2)$, where $\mathbf{W}$ is a Wiener process, $\Vert\cdot\Vert$ denotes the Euclidean 2-norm\footnote{\normalsize For the remainder of this paper, unless otherwise specified, we will use $\Vert \cdot \Vert$ to denote the 2-norm.}, and where $\bB\bB^T = \bSig$. Dropping the small term proportional to $\eta$ reduces the weak error to $\mathcal{O}(\eta)$ \citep{Hu2017a}. This leads to the SDE
\begin{equation}
    d\bx = -\nabla f(\bx)dt  + \sqrt{\frac{\eta}{b}}\bB(\bx)d\bW.
    \label{eqn:sgd_cont_typical}
\end{equation}
Equation (\ref{eqn:sgd_cont_typical}) has appeared in a number of recent works \citep{Mandt2017, Mandt2016, Mandt-cont, soatto1, soatto2, 3fac}, and is generally obtained by making the replacement $\eta \rightarrow dt$ and $\sqrt{\eta}\bzeta \rightarrow \bB d\bW$ in (\ref{eqn:sgd_disc}), as a sort of reverse Euler-Maruyama discretization \citep{kloe-plat}.

\subsection{EASGD in continuous-time}
\label{ssec:EASGD}
Following \citep{EASGD}, we provide a brief introduction to the EASGD algorithm, and convert the resulting sequences to continuous-time. We imagine a distributed optimization setting with $p \in \mathbb{N}$ agents and a single master. We are interested in solving a stochastic optimization problem
\begin{equation}
    \min_{\bx} F(\bx) = \Exp_{\bzeta}\left[f(\bx, \bzeta)\right]\nonumber,
    \label{eqn:obj}
\end{equation}
where $\bx \in \mathbb{R}^n$ is the vector of parameters and $\bzeta$ is a random variable representing the stochasticity in the objective. This is equivalent to the distributed optimization problem \citep{boyd2010}
\begin{equation}
    \min_{\bx^1, \hdots, \bx^p, \txb}\sum_{i=1}^p \left(\Exp_{\bzeta^i}[f(\bx^i, \bzeta^i)] + \frac{k}{2}\Vert \bx^i - \txb \Vert^2\right),
    \label{eqn:dist_obj}
\end{equation}
where each $\bx^i$ is a local vector of parameters and $\txb$ is the quorum variable. The quadratic penalty ensures that all local agents remain close to $\txb$, and $k$ sets the coupling strength. Smaller values of $k$ allow for more exploration, while larger values ensure a greater degree of synchronization. Intuitively, the interaction between agents mediated by $\txb$ is expected to help individual trajectories escape local minima, saddle points, and flat regions, unless they all fall into the same deep or wide minimum together.

We assume the expectation in (\ref{eqn:obj}) is approximated by a sum over input data points, and that the stochastic gradient is computed by taking a minibatch of size $b$. After taking an SGD step, the updates for each agent and the quorum variable become
\begin{align*}
    \bx^i_{t+1} &= \bx^i_t - \eta \nabla f(\bx^i_t) + \eta k \left(\txb_t - \bx^i_t\right) - \frac{\eta}{\sqrt{b}}\bzeta^i,\\
    \txb_{t+1} &= \txb_t + \eta p k \left(\comb_t - \txb_t\right),
\end{align*}
where $\comb_t = \frac{1}{p}\sum_{i=1}^p \bx^i_t$ and $\Exp\left[\bzeta^i \left(\bzeta^i\right)^T\right] = \bSig(\bx^i_t)$. Transferring to the continuous-time limit, these equations become,
\begin{align}
    \label{eqn:EASGD_cont_x}
    d\bx^i &= \left(-\nabla f(\bx^i) + k \left(\txb - \bx^i \right)\right)dt + \sqrt{\frac{\eta}{b}}\bB(\bx^i) d\bW^i,\\
    d\txb &= k p \left(\comb - \txb\right)dt,
    \label{eqn:EASGD_cont_quor}
\end{align}
with $\bB \bB^T = \bSig$. Note that in (\ref{eqn:EASGD_cont_quor}), the dynamics of $\txb$ represent a simple low-pass filter of the center of mass (spatial mean) variable $\comb$. In the limit of large $p$, the dynamics of this filter will be much faster than the SGD dynamics, and the continuous-time EASGD system can be approximately replaced by
\begin{equation}
    d\bx^i = \left(-\nabla f(\bx^i) + k \left(\comb - \bx^i\right)\right)dt + \sqrt{\frac{\eta}{b}}\bB(\bx^i) d\bW^i.
    \label{eqn:QSGD}
\end{equation}
We refer to (\ref{eqn:QSGD}) as Quorum SGD (QSGD), and it will be a significant focus of this work.

\subsection{Background on nonlinear contraction theory}
The main mathematical tool used in this work is nonlinear contraction theory, a form of incremental stability for nonlinear systems. In particular, we specialize to the case of time- and state-independent metrics; further details can be found in \citet{Lohmiller-Slot}.
\begin{definition}{(Contraction)}
The nonlinear dynamical system
\begin{equation}
    \dot{\bx} = \mathbf{f}(\bx, t),
    \label{eqn:gen_dyn}
\end{equation}
with $\bx \in \mathbb{R}^n$ and $\mathbf{f} \in \mathcal{C}^1(\mathbb{R}^n \times \mathbb{R}, \mathbb{R}^n)$ is said to be contracting with rate $\lambda > 0$ and invertible metric transformation $\bThet \in \mathbb{R}^{n \times n}$ if the symmetric part of the generalized Jacobian
\begin{equation}
    \left(\bThet \nabla \mathbf{f}(\bx, t) \bThet^{-1}\right)_s \leq -\lambda \bI
    \label{eqn:contr_jac}
\end{equation}
is uniformly negative definite for all $\bx \in \mathbb{R}^n$ and all $t \in \mathbb{R}$. Above, subscript $s$ denotes the symmetric part of a matrix, $\mathbf{A}_s = \frac{1}{2}\left(\mathbf{A} + \mathbf{A}^T\right)$. Equivalently, the system is said to be contracting in the corresponding metric $\bM = \bThet^T \bThet$.
\end{definition}

If condition (\ref{eqn:contr_jac}) is satisfied, all trajectories exponentially converge to one another regardless of initial conditions. That is, for two solutions $\bx_1(t)$ and $\bx_2(t)$ of (\ref{eqn:gen_dyn}),
\begin{equation}
    \Vert \bx_1(t) - \bx_2(t) \Vert_{\bM} \leq e^{-\lambda t}\Vert \bx_1(0) - \bx_2(0)\Vert_{\bM}
    \label{eqn:contr_forget}
\end{equation}
where $\Vert\bx\Vert_{\bM} = \sqrt{\bx^T \bM \bx}$. Intuitively, because of the property (\ref{eqn:contr_forget}), a nonlinear system is called contracting if differences in system trajectories due to initial conditions and temporary disturbances are exponentially forgotten. This behavior is proved differentially, by considering the time evolution of the squared Euclidean norm of the virtual displacement $\delta \bz = \bThet \delta \bx$, which formally obeys the differential equation $\delta\dot{\bz} = \bThet \nabla \mathbf{f}(\bx)\bThet^{-1}\delta \bz$ \citep{Lohmiller-Slot}. As an immediate and powerful corollary, if the system is contracting and a single trajectory is known, then all trajectories must converge to the single known trajectory exponentially. 

In this work, we will interchangeably refer to $\mathbf{f}$, the system, and the generalized Jacobian as contracting depending on the context. In particular, for stochastic differential equations, we will refer to $\mathbf{f}$ as contracting if the deterministic system is contracting. Two specific robustness results for contracting systems needed for the derivations in this work are summarized below.

\begin{lem}
\label{lem:pert}
     Consider the dynamical system (\ref{eqn:gen_dyn}), and assume that it is contracting with metric transformation $\bThet$ and contraction rate $\lambda$. Let $\chi = \Vert \bThet^{-1}\Vert \Vert \bThet \Vert$ denote the condition number of $\bThet$, where $\Vert \bThet \Vert = \sup_{\Vert \by \Vert = 1} \Vert \bThet \by \Vert$ denotes the induced matrix 2-norm. Consider the perturbed dynamical system
    \begin{equation}
        \dot{\bx} = \mathbf{f}(\bx, t) + \beps(\bx, t).
        \label{eqn:gen_dyn_pert}
    \end{equation}
    Then, for a solution $\bx_1$ of (\ref{eqn:gen_dyn}) and a solution $\bx_2$ of (\ref{eqn:gen_dyn_pert}), with $R = \Vert \bThet\left(\bx_1 - \bx_2\right)\Vert$,
    \begin{equation}
        \dot{R} + \lambda R \leq \Vert \bThet \beps(\bx, t)\Vert
        \label{eqn:rob_basic}
    \end{equation}
    Furthermore, if $\Vert \beps \Vert \leq A e^{-a t} + B$ with $A, B \in \mathbb{R}$ and $a \in \mathbb{R}^+$, then after exponential transients of rates $a$ and $\lambda$,
    \begin{equation}
        \Vert \bx_1(t) - \bx_2(t) \Vert \leq \frac{\chi B}{\lambda}
        \label{eqn:rob}
    \end{equation}
\end{lem}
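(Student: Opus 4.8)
The plan is to prove \eqref{eqn:rob_basic} by a virtual-displacement/comparison argument and then integrate that differential inequality to obtain \eqref{eqn:rob}. Write the nominal and perturbed trajectories as $\dot{\bx}_1 = \mathbf{f}(\bx_1,t)$ and $\dot{\bx}_2 = \mathbf{f}(\bx_2,t) + \beps(\bx_2,t)$, and set $\mathbf{d} = \bThet(\bx_1-\bx_2)$, so that $R = \Vert\mathbf{d}\Vert$. Using the mean-value identity $\mathbf{f}(\bx_1,t) - \mathbf{f}(\bx_2,t) = \bA(t)(\bx_1-\bx_2)$ with $\bA(t) = \int_0^1 \nabla\mathbf{f}\!\left(\bx_2 + s(\bx_1-\bx_2),\,t\right)ds$, together with the fact that $\bThet$ is constant, one obtains $\dot{\mathbf{d}} = \bThet\bA\bThet^{-1}\mathbf{d} - \bThet\beps$. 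Since the contraction hypothesis \eqref{eqn:contr_jac} holds pointwise for every spatial argument, and $\bA$ is a convex average of such Jacobians, the symmetric part of $\bThet\bA\bThet^{-1}$ is also uniformly bounded above by $-\lambda\bI$.

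Next I would differentiate $R^2 = \mathbf{d}^T\mathbf{d}$ along the trajectories: $R\dot{R} = \mathbf{d}^T\dot{\mathbf{d}} = \mathbf{d}^T\!\left(\bThet\bA\bThet^{-1}\right)_s\mathbf{d} - \mathbf{d}^T\bThet\beps \leq -\lambda R^2 + R\,\Vert\bThet\beps\Vert$, using the bound just established together with Cauchy--Schwarz on the last term. Dividing through by $R$ at points where $R > 0$ (and passing to upper Dini derivatives to handle $R = 0$, where $t\mapsto R(t)$ is Lipschitz but possibly not differentiable) gives $\dot{R} + \lambda R \leq \Vert\bThet\beps(\bx_2,t)\Vert$, which is \eqref{eqn:rob_basic}.

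For the asymptotic bound, estimate $\Vert\bThet\beps\Vert \leq \Vert\bThet\Vert\,\Vert\beps\Vert \leq \Vert\bThet\Vert\left(Ae^{-at}+B\right)$ and apply the comparison lemma (Gr\"onwall) to $\dot{R} + \lambda R \leq \Vert\bThet\Vert\left(Ae^{-at}+B\right)$. Integrating this scalar linear inequality yields $R(t) \leq R(0)e^{-\lambda t} + \Vert\bThet\Vert\!\left[\tfrac{A}{\lambda-a}\!\left(e^{-at}-e^{-\lambda t}\right) + \tfrac{B}{\lambda}\!\left(1-e^{-\lambda t}\right)\right]$, with the prefactor $A\,t\,e^{-\lambda t}$ replacing the first bracketed term in the degenerate case $\lambda = a$. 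Every term other than $\Vert\bThet\Vert B/\lambda$ decays with rate $a$ or $\lambda$, so once these transients have passed, $R(t) \leq \Vert\bThet\Vert B/\lambda$. Finally, $\Vert\bx_1-\bx_2\Vert = \Vert\bThet^{-1}\mathbf{d}\Vert \leq \Vert\bThet^{-1}\Vert\,R$, and combining with the previous line gives $\Vert\bx_1(t)-\bx_2(t)\Vert \leq \Vert\bThet^{-1}\Vert\,\Vert\bThet\Vert\,B/\lambda = \chi B/\lambda$, which is \eqref{eqn:rob}.

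The only points requiring a little care — none of them substantial — are: verifying that the averaged Jacobian $\bA$ inherits the uniform contraction bound (immediate from the pointwise hypothesis, since the set of matrices whose $\bThet$-symmetrized part is $\leq -\lambda\bI$ is convex); the non-smoothness of $R$ at $R = 0$, handled by working with the upper right Dini derivative; and making precise the phrase ``after exponential transients,'' i.e.\ that \eqref{eqn:rob} is the bound that survives once the $e^{-at}$ and $e^{-\lambda t}$ contributions are discarded. I expect the only genuinely mechanical item to be the case $\lambda = a$ in the integral, which alters just a transient prefactor and not the limiting constant $\chi B/\lambda$.
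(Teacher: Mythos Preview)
Your proof is correct and follows essentially the same route as the paper: establish the differential inequality \eqref{eqn:rob_basic}, bound $\Vert\bThet\beps\Vert\le\Vert\bThet\Vert(Ae^{-at}+B)$, integrate (convolve with $e^{-\lambda t}$), and pass from $R$ to $\Vert\bx_1-\bx_2\Vert$ via $\Vert\bThet^{-1}\Vert$. The only difference is that the paper cites \citet{Lohmiller-Slot} for \eqref{eqn:rob_basic} rather than deriving it, whereas you supply a self-contained mean-value/averaged-Jacobian argument and are more careful about the $R=0$ and $\lambda=a$ edge cases---all of which is fine.
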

\begin{proof}
See point (vii) of ``linear properties of generalized contraction analysis'' in \citet{Lohmiller-Slot} for the derivation of (\ref{eqn:rob_basic}). From (\ref{eqn:rob_basic}), $\dot{R} + \lambda R \leq \Vert \bThet \Vert \Vert \beps \Vert \leq \Vert \bThet \Vert\left(A e^{-at} + B\right)$. Convolving $e^{-\lambda t}$ with the right-hand side yields the inequality 
\begin{equation}
    R(t) \leq \Vert \bThet \Vert \left(\frac{B}{\lambda} + \frac{A e^{-\lambda t}}{a-\lambda} - \frac{B e^{-\lambda t}}{\lambda} -\frac{A e^{-a t}}{a-\lambda}\right)
\end{equation}
Noting that $\Vert \bx_1(t) - \bx_2(t)\Vert = \Vert \bThet^{-1} \bThet \left(\bx_1 - \bx_2\right)\Vert \leq \Vert \bThet^{-1}\Vert \Vert \bThet\left(\bx_1 - \bx_2\right)\Vert = \Vert \bThet^{-1}\Vert R$ yields the result (\ref{eqn:rob}).
\end{proof}

\begin{thm}
\label{thm:stoch}
 Consider the stochastic differential equation
    \begin{equation}
        d\bx = \mathbf{f}(\bx, t)dt + \bsig(\bx, t)d\bW,
        \label{eqn:stoch_contr}
    \end{equation}
    with $\bx \in \mathbb{R}^n$ and where $\bW$ denotes an $n$-dimensional Wiener process. Assume that there exists a positive definite metric $\bM = \bThet^T\bThet$ such that $\bx^T \bM \bx \geq \beta \Vert \bx \Vert^2$ with $\beta > 0$, and that $\mathbf{f}$ is contracting in this metric. Further assume that $Tr\left(\bsig(\bx, t)^T\bM\bsig(\bx, t)\right) \leq C$ where $C \in \mathbb{R}^+$. Then, for two trajectories $\ba(t)$ and $\bb(t)$ driven by independent sources of noise with stochastic initial conditions given by a probability distribution $p(\bzeta_1, \bzeta_2)$,
    \begin{align}
        \Exp\left[\Vert \ba(t) - \bb(t)\Vert^2\right] &\leq \frac{1}{\beta}\left(\Exp\left[\left(\Vert \ba(0) - \bb(0)\Vert_{\bM}^2 - \frac{C}{\lambda}\right)^+\right]e^{-2\lambda t} + \frac{C}{\lambda}\right)\nonumber
    \end{align}
    where $(\cdot)^+$ denotes the unit ramp (or ReLU) function. The expectation on the left-hand side is over the noise $d\bW(s)$ for all $s < t$, and the expectation on the right-hand side is over the distribution of initial conditions.
\end{thm}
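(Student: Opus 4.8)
The plan is to track the squared distance between the two trajectories in the fixed metric $\bM = \bThet^T\bThet$ and reduce the problem to a scalar linear differential inequality. Write $\bu(t) = \ba(t) - \bb(t)$ and $V(t) = \bu(t)^T\bM\bu(t) = \Vert \ba(t)-\bb(t)\Vert_\bM^2$. Since $\ba$ and $\bb$ each solve (\ref{eqn:stoch_contr}) but with independent Wiener processes $\bW_a$, $\bW_b$,
\begin{equation}
    d\bu = \left(\mathbf{f}(\ba,t) - \mathbf{f}(\bb,t)\right)dt + \bsig(\ba,t)\,d\bW_a - \bsig(\bb,t)\,d\bW_b.\nonumber
\end{equation}
Because $V$ is quadratic and $\bM$ is state- and time-independent, Itô's formula gives, with $dN_t = 2\bu^T\bM\left(\bsig(\ba,t)d\bW_a - \bsig(\bb,t)d\bW_b\right)$ a local martingale increment,
\begin{equation}
    dV = 2\bu^T\bM\left(\mathbf{f}(\ba,t)-\mathbf{f}(\bb,t)\right)dt + Tr\!\left(\bM\bsig(\ba,t)\bsig(\ba,t)^T + \bM\bsig(\bb,t)\bsig(\bb,t)^T\right)dt + dN_t,\nonumber
\end{equation}
where the cross term $d\bW_a\,d\bW_b^T$ dropped by independence, which is exactly why both noise intensities enter additively.

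I would then bound the two $dt$-terms separately. For the drift, the fundamental theorem of calculus along the segment from $\bb$ to $\ba$ gives $\mathbf{f}(\ba,t)-\mathbf{f}(\bb,t) = \int_0^1 \nabla\mathbf{f}(\bb+s\bu,t)\,\bu\,ds$, so
\begin{equation}
    2\bu^T\bM\left(\mathbf{f}(\ba,t)-\mathbf{f}(\bb,t)\right) = 2\int_0^1 (\bThet\bu)^T\left(\bThet\nabla\mathbf{f}(\bb+s\bu,t)\bThet^{-1}\right)_s(\bThet\bu)\,ds \leq -2\lambda\Vert\bThet\bu\Vert^2 = -2\lambda V,\nonumber
\end{equation}
using that a quadratic form only sees the symmetric part of its matrix and that (\ref{eqn:contr_jac}) holds pointwise. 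For the diffusion term, $Tr(\bM\bsig\bsig^T) = Tr(\bsig^T\bM\bsig) \le C$ for each trajectory, so it is bounded by $2C$. Taking expectations conditional on the initial data and writing $m(t) = \Exp[V(t)\mid \ba(0),\bb(0)]$, the martingale term drops --- here one must check the integrability that makes $N_t$ a true martingale, which follows from the trace bound together with standard moment estimates for the SDE --- leaving
\begin{equation}
    \dot m(t) \le -2\lambda\,m(t) + 2C.\nonumber
\end{equation}

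Finally I would integrate this by the comparison lemma: the solution of the associated equality with the same initial value is $\phi(t) = \left(m(0) - C/\lambda\right)e^{-2\lambda t} + C/\lambda$, so $m(t) \le \phi(t)$; since $m \ge 0$ and $\phi$ is monotone toward $C/\lambda$, this is equivalent to $m(t) \le (m(0) - C/\lambda)^+ e^{-2\lambda t} + C/\lambda$, where for fixed initial data $m(0) = \Vert\ba(0)-\bb(0)\Vert_\bM^2$. Taking expectation over the initial distribution $p(\bzeta_1,\bzeta_2)$ and then using $\bx^T\bM\bx \ge \beta\Vert\bx\Vert^2$ to convert the left-hand side from the $\bM$-norm to the Euclidean norm gives precisely the stated bound. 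I expect the main obstacle to be bookkeeping rather than a hard estimate: correctly handling the two independent noise sources (so that the cross-variation vanishes and the factor of two is carried through --- it is what turns $2C/(2\lambda)$ into $C/\lambda$), and confirming that the stochastic integral is a genuine martingale so that the expectation step is legitimate; the ramp function in the statement is then merely the footprint of the comparison lemma in the regime $m(0) < C/\lambda$.
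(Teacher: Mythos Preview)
Your proposal is correct and follows the standard stochastic contraction argument. The paper does not actually supply its own proof of this theorem: it simply cites an external reference (\citet{stoch-cont}, Thm.~2), and your It\^o-formula-plus-comparison-lemma derivation is precisely the argument found there. One minor quibble: when $m(0) < C/\lambda$ the comparison solution $\phi(t)$ is strictly below $C/\lambda$, so replacing $\phi$ by the ReLU form is a weakening rather than an equivalence --- but that weakening is exactly what the stated bound asserts, so the proof goes through as written.
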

See \citet{stoch-cont}, Thm.~2 for a proof of Thm.~\ref{thm:stoch}. A corollary that will be useful in Sec.~\ref{sec:conv} is as follows.
\begin{cor}
\label{cor:stoch_det}
Assume that the conditions of Thm.~\ref{thm:stoch} are satisfied. Then, for a trajectory $\bx_{nf}(t)$ of (\ref{eqn:gen_dyn}) and a trajectory $\bx(t)$ of (\ref{eqn:stoch_contr}),
\begin{equation}
    \Exp\left[\Vert \bx(t) - \bx_{nf}(t)\Vert^2\right] \leq \frac{1}{\beta}\left(\Exp\left[\left(\Vert \bx(0) - \bx_{nf}(0)\Vert_{\bM}^2 - \frac{C}{2\lambda}\right)^+\right]e^{-2\lambda t} + \frac{C}{2 \lambda}\right)\nonumber.
\end{equation}
\end{cor}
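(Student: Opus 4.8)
At first sight this is the special case of Theorem~\ref{thm:stoch} in which the second trajectory $\bb$ is replaced by the noiseless flow $\bx_{nf}$. It is not quite a literal corollary, though: the bound asserted here carries $C/(2\lambda)$ where a naive substitution into Theorem~\ref{thm:stoch} would leave $C/\lambda$. That extra factor of two is real — it reflects the fact that only one of the two trajectories is driven by noise, so only one Itô correction term appears — and recovering it means reopening the estimate behind Theorem~\ref{thm:stoch} rather than invoking it as a black box.

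The plan is as follows. Set $V(t) = \Vert \bx(t) - \bx_{nf}(t)\Vert_{\bM}^2$ and write $\bz = \bx - \bx_{nf}$, so that $d\bz = \big(\mathbf{f}(\bx,t) - \mathbf{f}(\bx_{nf},t)\big)dt + \bsig(\bx,t)\,d\bW$. Applying It\^o's formula to the quadratic form $V$ produces a drift term $2\bz^T\bM\big(\mathbf{f}(\bx,t) - \mathbf{f}(\bx_{nf},t)\big)dt$, a martingale term $2\bz^T\bM\bsig(\bx,t)\,d\bW$, and — because only $\bx$ carries noise — a \emph{single} It\^o correction $Tr\big(\bsig(\bx,t)^T\bM\bsig(\bx,t)\big)dt$, which the hypothesis bounds by $C\,dt$. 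For the drift term I would use the finite-difference form of the contraction condition: writing $\mathbf{f}(\bx,t)-\mathbf{f}(\bx_{nf},t)=\int_0^1 \nabla\mathbf{f}(\bx_{nf}+s\bz,t)\,\bz\,ds$ and inserting $\bM=\bThet^T\bThet$ turns the integrand into $(\bThet\bz)^T\big(\bThet\nabla\mathbf{f}\,\bThet^{-1}\big)(\bThet\bz)$, so~(\ref{eqn:contr_jac}) gives $2\bz^T\bM\big(\mathbf{f}(\bx,t) - \mathbf{f}(\bx_{nf},t)\big) \leq -2\lambda\,\bz^T\bM\bz = -2\lambda V$ — exactly the device used in the proof of Lemma~\ref{lem:pert}. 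Taking expectations, the stochastic integral drops out (by the integrability afforded by the trace bound together with $\bx^T\bM\bx\geq\beta\Vert\bx\Vert^2$), leaving the scalar differential inequality $\tfrac{d}{dt}\Exp[V(t)] \leq -2\lambda\,\Exp[V(t)] + C$.

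The rest is routine bookkeeping. Conditioning on the initial condition and applying the comparison lemma to $\dot u \leq -2\lambda u + C$ gives $\Exp[V(t)\mid \bx(0),\bx_{nf}(0)] \leq \tfrac{C}{2\lambda} + \big(V(0) - \tfrac{C}{2\lambda}\big)e^{-2\lambda t}$; checking the two cases $V(0)\geq \tfrac{C}{2\lambda}$ and $V(0)<\tfrac{C}{2\lambda}$ shows the right-hand side is dominated by $\big(V(0) - \tfrac{C}{2\lambda}\big)^+ e^{-2\lambda t} + \tfrac{C}{2\lambda}$, and taking the outer expectation over the initial conditions produces the $\Exp\big[(\,\cdot\,)^+\big]$ term in the statement. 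Finally $\bx^T\bM\bx\geq\beta\Vert\bx\Vert^2$ converts $\Exp[V(t)]$ into $\beta\,\Exp\big[\Vert\bx(t)-\bx_{nf}(t)\Vert^2\big]$, and dividing through yields the claimed bound. The only genuinely substantive step — and the one I would be most careful about — is the passage from the infinitesimal condition~(\ref{eqn:contr_jac}) to the finite inequality $2\bz^T\bM(\mathbf{f}(\bx,t)-\mathbf{f}(\bx_{nf},t))\leq -2\lambda V$: it is standard in contraction analysis but is precisely where the constant-metric and uniform-definiteness hypotheses get used, and it is what makes the estimate close. The remaining technical points — legitimacy of It\^o's formula and of exchanging expectation with the stochastic integral, and using the integral rather than the differential form of the comparison step if one prefers not to assume $t\mapsto\Exp[V(t)]$ is differentiable — I would handle exactly as in \citet{stoch-cont}.
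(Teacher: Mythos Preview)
Your proposal is correct and is precisely what the paper intends: it states only that Cor.~\ref{cor:stoch_det} ``is obtained by following the proof of Thm.~2 in \citet{stoch-cont} with the restriction that one system is deterministic,'' and your argument fills in exactly those details, correctly isolating the single It\^o correction as the source of the improved constant $C/(2\lambda)$.
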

Cor.~\ref{cor:stoch_det} is obtained by following the proof of Thm.~2 in \citet{stoch-cont} with the restriction that one system is deterministic. To reduce the appearance of decaying exponential terms, in applications of Thm.~\ref{thm:stoch}, Cor.~\ref{cor:stoch_det}, and other related contraction-based bounds, we will simply state the final constant and the corresponding rate of exponential transients. The conditions of Thm.~\ref{thm:stoch} are worthy of their own definition.
\begin{definition}{(Stochastic contraction)}
If the conditions of Thm.~\ref{thm:stoch} are satisfied, the system (\ref{eqn:stoch_contr}) is said to be stochastically contracting in the metric $\bM$ (or with metric transformation $\bThet$) with bound $C$ and rate $\lambda$.
\end{definition}

In this work, we will also make use of an extension of contraction known as partial contraction originally introduced in \citet{Wang2005}. The procedure is summarized below.

\begin{thm}
    \label{thm:part_cont}
    Consider the nonlinear dynamical system (\ref{eqn:gen_dyn}) -- not assumed to be contracting -- and consider a contracting auxiliary system of the form
    \begin{equation}
        \dot{\by} = \mathbf{g}(\by, \bx, t)
        \label{eqn:virt_g}
    \end{equation}
    with the requirement that $\mathbf{g}(\bx, \bx, t) = \mathbf{f}(\bx, t)$.\footnote{\normalsize For example, say $\mathbf{f}(\bx, t) = -\bP(\bx) \bx$ with $\bP(\bx)$ a symmetric and uniformly positive definite matrix. Then $\mathbf{g}(\by, \bx, t) = -\bP(\bx)\by$ satisfies this restriction requirement. The $\by$ system is also contracting in $\by$, as the symmetric part of the Jacobian $\bJ_s = -\bP(\bx) < 0$ uniformly. On the other hand, the $\mathbf{f}(\bx, t)$ system has Jacobian $\frac{\partial f_i}{\partial x_j} = -P_{ij}(\bx) - \sum_k \frac{\partial P_{ik}(\bx)}{\partial x_j}x_k$, which has symmetric part with unknown definiteness without further assumptions on $\bP$.} Assume a single trajectory $\by(t)$ of (\ref{eqn:virt_g}) is known. Then all trajectories of (\ref{eqn:gen_dyn}) converge to $\by(t)$.
\end{thm}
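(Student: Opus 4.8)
The plan is to exploit the defining feature of the auxiliary system: once a particular solution $\bx(t)$ of (\ref{eqn:gen_dyn}) is substituted into the second slot of $\mathbf{g}$, the resulting non-autonomous system in $\by$ admits \emph{two} solutions, namely the known trajectory $\by(t)$ and $\bx(t)$ itself, and contraction forces them together exponentially. No new estimate is required beyond the basic contraction property (\ref{eqn:contr_forget}).

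First I would fix an arbitrary trajectory $\bx(t)$ of (\ref{eqn:gen_dyn}) and define the time-varying vector field $\mathbf{h}(\by, t) := \mathbf{g}(\by, \bx(t), t)$, so that the closed system $\dot{\by} = \mathbf{h}(\by, t)$ is obtained from (\ref{eqn:virt_g}) by treating $\bx(t)$ as an exogenous signal. Because the auxiliary system (\ref{eqn:virt_g}) is assumed contracting in $\by$ — meaning the generalized Jacobian condition (\ref{eqn:contr_jac}) holds for the Jacobian $\nabla_\by \mathbf{g}$ with some fixed metric transformation $\bThet$ and rate $\lambda$, uniformly over the frozen argument $\bx$ — the system $\dot{\by} = \mathbf{h}(\by, t)$ inherits contraction with the same $\bThet$ and $\lambda$.

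Next I would record two facts. First, $\by(t)$, the known trajectory of (\ref{eqn:virt_g}) along $\bx(t)$, is by construction a solution of $\dot{\by} = \mathbf{h}(\by, t)$. Second, the restriction requirement $\mathbf{g}(\bx, \bx, t) = \mathbf{f}(\bx, t)$ gives $\mathbf{h}(\bx(t), t) = \mathbf{g}(\bx(t), \bx(t), t) = \mathbf{f}(\bx(t), t) = \dot{\bx}(t)$, so $\bx(t)$ is \emph{also} a solution of $\dot{\by} = \mathbf{h}(\by, t)$. Applying (\ref{eqn:contr_forget}) to these two solutions of the contracting system $\dot{\by} = \mathbf{h}(\by, t)$ yields $\Vert \bx(t) - \by(t)\Vert_{\bM} \leq e^{-\lambda t}\Vert \bx(0) - \by(0)\Vert_{\bM} \to 0$. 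Since $\bx(t)$ was an arbitrary trajectory of (\ref{eqn:gen_dyn}), all trajectories converge to $\by(t)$.

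The only delicate point — and the thing I would make sure to state precisely — is the bookkeeping about \emph{which} variable the auxiliary system contracts in: "contracting" for (\ref{eqn:virt_g}) must mean contraction in $\by$, with the Jacobian taken with respect to $\by$ alone and the negative-definiteness in (\ref{eqn:contr_jac}) holding uniformly in the frozen argument $\bx$, exactly as in the footnoted example $\mathbf{g}(\by, \bx, t) = -\bP(\bx)\by$. Granting this reading, the substitution trick does all the work. Existence and uniqueness of the trajectories involved, and hence the legitimacy of regarding $\bx(t)$ as a solution of the $\by$-system, are guaranteed by the $\mathcal{C}^1$ assumptions on $\mathbf{f}$ and $\mathbf{g}$.
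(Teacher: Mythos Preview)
Your proposal is correct and takes essentially the same approach as the paper: observe that any trajectory $\bx(t)$ of (\ref{eqn:gen_dyn}) is, via the restriction $\mathbf{g}(\bx,\bx,t)=\mathbf{f}(\bx,t)$, a particular solution of the contracting auxiliary system, and then invoke contraction to force it onto the known trajectory $\by(t)$. The paper's proof is the same two-line argument; you simply make the bookkeeping (freezing $\bx(t)$ in the second slot, contraction being in $\by$ uniformly in the exogenous argument) more explicit.
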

\begin{proof}
    By assumption, (\ref{eqn:virt_g}) is contracting, and so all trajectories converge to $\by(t)$. Because $\mathbf{g}(\bx, \bx, t) = \mathbf{f}(\bx, t)$, any solution $\bx(t)$ of (\ref{eqn:gen_dyn}) is also a solution of (\ref{eqn:virt_g}), and hence must converge to $\by(t)$.
\end{proof}
We will commonly refer to the auxiliary $\by$ system in the above theorem as a \emph{virtual system}, and $\mathbf{f}$ is said to be partially contracting. Thm.~\ref{thm:part_cont} enables the application of contraction to systems which in themselves are not contracting, but can be embedded in a virtual system which is. 

This notion also extends to stochastic systems through the use of stochastic contraction. If a stochastically contracting system
\begin{equation}
    d\by = g(\by, \bx, t)dt + \boldsymbol{\Xi}(\by, \bx, t)d\bW
    \label{eqn:stoch_virt_sys}
\end{equation}
can be found such that $\mathbf{g}(\bx, \bx, t) = \mathbf{f}(\bx, t)$ and $\boldsymbol{\Xi}(\bx, \bx) = \boldsymbol{\sigma}(\bx, t)$, then trajectories of (\ref{eqn:stoch_virt_sys}) can be compared to trajectories of (\ref{eqn:gen_dyn}) through the application of Cor.~\ref{cor:stoch_det} or (\ref{eqn:stoch_contr}) through the application of Thm.~\ref{thm:stoch}.

\subsection{Assumptions}
\label{ssec:assump}
We require two main assumptions about the objective function $f(\bx)$, both of which have been employed in previous work analyzing synchronization and noise in nonlinear systems \citep{tab-slot}. The first is an assumption on the nonlinearity of the \emph{components} of the gradient.

\begin{assumption}
\label{asmpt:hess}
Assume that the Hessian matrix of each component of the negative gradient has bounded maximum eigenvalue, $\nabla^2 \left[\left(-\nabla f(\bx)\right)_j\right] \leq Q \bI$ for all $j$.
\end{assumption}

The second assumption is a condition on the robustness of the distributed gradient flows studied in this work to small, potentially stochastic perturbations.

\begin{assumption}
\label{asmpt:conv}
Consider two dynamical systems 
\begin{align}
    \dot{\bx} &= -\nabla f(\bx) + k (\bz - \bx),
    \label{eqn:asmp2_x}\\
    d\by&= \left(-\nabla f(\by) + k (\bz - \by) + \mathbf{P}_{l}\right)dt + \beta_q d\bW,
    \label{eqn:asmp2_y}
\end{align}
where $\mathbf{P}_{l}$ is a continuous-time stochastic process dependent on a parameter $l$ and $\beta_q \in \mathbb{R}$ is a real coefficient dependent on a parameter $q$. Denote by $\bx(t)$ the solution to (\ref{eqn:asmp2_x}) and by $\by_{l, q}(t)$ the solution to (\ref{eqn:asmp2_y}) with the same initial condition, $\bx(0) = \by_{l, q}(0)$.  We assume that $\lim_{l\rightarrow \infty}\mathbb{E}\left(\Vert\mathbf{P}_l\Vert\right) = 0$ and $\lim_{q \rightarrow \infty}\beta_q = 0$ implies that $\lim_{l\rightarrow \infty}\lim_{q\rightarrow \infty}\Vert \bx - \by_{l, q}\Vert = 0$ almost surely.
\end{assumption}

Continuous dependence of trajectories on parameters of the dynamics in the sense of Assumption \ref{asmpt:conv} can be characterized for deterministic systems through continuity assumptions on the dynamics --  see, for example, Section 3.2 in \citet{Khalil} -- here we assume a natural stochastic extension. Assumption \ref{asmpt:conv} has been verified for FitzHugh-Nagumo oscillators where $\bP_l$ is a white noise process \citep{Tuckwell1998}, and validated in simulation for more complex nonlinear oscillators \citep{tab-slot}. We remark that $\mathbb{E}\left[\Vert \bP\Vert\right] \rightarrow 0$ implies that $\Vert \bP \Vert \rightarrow 0$ almost surely, and hence that $\bP \rightarrow \mathbf{0}$ almost surely.

\section{Synchronization and noise}
\label{sec:sync_noise}
In this section, we analyze the interaction between synchronization of the distributed QSGD agents and the noise they experience. We begin with a derivation of a quantitative measure of synchronization that applies to a class of distributed SGD algorithms involving coupling to a common external signal with no communication delays. We then present the section's primary contribution, which will serve as a basis for the theory in the remainder of the paper, as well as for the intuition for various experiments.

\subsection{A measure of synchronization}
\label{ssec:sync_measure}
We now present a simple theorem on synchronization in the deterministic setting, which will allow us to prove a bound on synchronization in the stochastic setting using Thm.~\ref{thm:stoch}.
\begin{thm}
    \label{thm:sync}
    Consider the coupled gradient descent system
    \begin{equation}
        \dot{\bx}^i = -\nabla f(\bx^i) + k (\bz - \bx^i),
        \label{eqn:dist_GD}
    \end{equation}
    where $\bz$ represents a common external signal. Let $\bal$ denote the maximum eigenvalue of $-\nabla^2 f(\bx)$. For $k > \bal$, the individual $\bx^i$ trajectories synchronize exponentially with rate $k - \bal$ regardless of initial conditions.
\end{thm}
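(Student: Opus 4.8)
The plan is to use partial contraction (Thm.~\ref{thm:part_cont}) applied to the coupled gradient system. The key observation is that each agent $\bx^i$ obeys the \emph{same} dynamics $\dot{\bx}^i = -\nabla f(\bx^i) + k(\bz - \bx^i)$ driven by the \emph{common} signal $\bz$, so if I build a virtual system $\dot{\by} = -\nabla f(\by) + k(\bz - \by)$ and show it is contracting, then all the $\bx^i$ are particular trajectories of this virtual system and hence converge to one another at the contraction rate. So the real content is: show the virtual system is contracting with rate $k - \bal$ in the identity metric.

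The main step is the Jacobian estimate. The Jacobian of the virtual vector field $\mathbf{g}(\by, \bz, t) = -\nabla f(\by) + k(\bz - \by)$ with respect to $\by$ is $-\nabla^2 f(\by) - k\bI$, which is already symmetric, so its symmetric part is itself. By the definition of $\bal$ as the maximum eigenvalue of $-\nabla^2 f(\bx)$ (uniformly over $\bx$), we have $-\nabla^2 f(\by) \leq \bal \bI$, hence the symmetric Jacobian satisfies $-\nabla^2 f(\by) - k\bI \leq (\bal - k)\bI = -(k - \bal)\bI$, which is uniformly negative definite exactly when $k > \bal$. This verifies the contraction condition (\ref{eqn:contr_jac}) with $\bThet = \bI$ and rate $\lambda = k - \bal$.

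From there I invoke Thm.~\ref{thm:part_cont}: since $\mathbf{g}(\bx, \bx, t) = -\nabla f(\bx) + k(\bz - \bx)$ need not literally match the form in that theorem's hypothesis (there $\mathbf{g}$ depends on a comparison trajectory, not an external signal), it is cleaner to argue directly as in the proof of that theorem --- the $\by$-system is contracting, so any two of its solutions satisfy (\ref{eqn:contr_forget}) with $\bM = \bI$ and $\lambda = k - \bal$; and each $\bx^i(t)$ solves the $\by$-system with $\by = \bx^i$, so $\Vert \bx^i(t) - \bx^j(t)\Vert \leq e^{-(k-\bal)t}\Vert \bx^i(0) - \bx^j(0)\Vert$ for every pair $i, j$. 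Taking $\bz$ to be any fixed common signal (in particular $\bz = \comb$), this is exactly exponential synchronization with rate $k - \bal$ independent of initial conditions.

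I expect the only subtlety --- not really an obstacle --- to be the regularity needed to instantiate contraction theory: $-\nabla^2 f$ should be bounded above so that $\bal$ is finite, and $f \in \mathcal{C}^2$ so the Jacobian exists; these are consistent with the standing assumptions (e.g.\ Assumption~\ref{asmpt:hess}, $L$-smoothness when invoked). One should also note that $\bz(t)$ being an arbitrary common external signal is harmless: it enters the virtual dynamics identically for all agents and, having zero $\by$-Jacobian contribution, does not affect the contraction estimate --- it plays the role of a shared time-varying input. A minor bookkeeping point is that the theorem statement's strict inequality $\nabla^2 f > l\bI$ in the strong-convexity definition uses ``$>$'' for the PSD order in the loose sense used throughout the paper; I would keep the same convention and not belabor it.
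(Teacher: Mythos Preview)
Your proposal is correct and follows essentially the same approach as the paper: introduce the virtual system $\dot{\by} = -\nabla f(\by) + k(\bz - \by)$, compute its Jacobian $-\nabla^2 f(\by) - k\bI$, observe it is symmetric and uniformly $\leq -(k-\bal)\bI$ for $k > \bal$, and conclude that all agents---being particular solutions of this contracting virtual system---synchronize exponentially with rate $k-\bal$. Your remarks on regularity and the role of the common input $\bz$ are accurate but peripheral; the paper's proof is slightly terser and simply notes the Jacobian is negative definite ``for any external input $\bz$.''
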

\begin{proof}
    Consider the auxiliary virtual system 
    \begin{equation}
        \dot{\by} = -\nabla f(\by) + k(\bz - \by),
        \label{eqn:virt_sync}
    \end{equation} 
    where $\bz$ is an external input. Note that with $\by = \bx^i$, we recover (\ref{eqn:dist_GD}) -- i.e., (\ref{eqn:virt_sync}) admits the trajectories of each agent $\bx^i$ as particular solutions. The Jacobian of (\ref{eqn:virt_sync}) is given by
    \begin{equation}
        \bJ = -\nabla^2 f(\by) - k \bI.
        \label{eqn:sync_jac}
    \end{equation}
    Equation (\ref{eqn:sync_jac}) is symmetric and negative definite for $k > \bal$ for any external input $\bz$. Because the individual $\bx^i$ are particular solutions of this virtual system, contraction implies that for all $i$ and $j$, $\Vert \bx^i - \bx^j\Vert \rightarrow 0$ exponentially. The contraction rate is given by $k - \bal$.
\end{proof}
This theorem motivates a definition.
\begin{definition}{(Global exponential synchronization)}
We will say the agents in a distributed algorithm globally exponentially synchronize if they all converge to one another exponentially regardless of initial conditions.
\end{definition}
Thm.~\ref{thm:sync} gives a simple condition on the coupling gain $k$ for synchronization of the individual agents in (\ref{eqn:dist_GD}). Because $\bz$ can represent any input, Thm.~\ref{thm:sync} applies to any dynamics of the quorum variable: with $\bz = \comb$, it applies to the QSGD algorithm, and with $\bz = \txb$, it applies to the EASGD algorithm. Under the assumption of a contracting \emph{deterministic} system, we can use the stochastic contraction results in Thm.~\ref{thm:stoch} to bound the expected distance between individual agents in the stochastic setting.
\begin{lem}
    \label{lem:sync_nomom}
    Assume that $k > \bar{\lambda}$ and that $Tr(\bB\bB^T) = Tr(\bSig) < C$ uniformly. Then, after exponential transients of rate $2(k - \bal)$, 
    \begin{equation}
        \mathbb{E}\left[\sum_{i}\Vert \bx^i - \comb\Vert^2\right] \leq \frac{\left(p-1\right)C\eta}{2 b\left(k - \bal\right)}
        \label{eqn:sync_com}
    \end{equation}
    where each $\bx^i$ is a solution of (\ref{eqn:EASGD_cont_x}) or (\ref{eqn:QSGD}).
\end{lem}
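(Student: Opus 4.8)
The plan is to reduce the collective dispersion $\Exp\bigl[\sum_i\|\bx^i-\comb\|^2\bigr]$ to a family of pairwise bounds $\Exp\bigl[\|\bx^i-\bx^j\|^2\bigr]$, each of which follows from the stochastic contraction theorem (Thm.~\ref{thm:stoch}), in exact parallel to the way the deterministic statement Thm.~\ref{thm:sync} is upgraded to the noisy setting. The algebraic bridge is the elementary variance identity
\[
\sum_{i=1}^p \|\bx^i - \comb\|^2 \;=\; \frac{1}{p}\sum_{i<j}\|\bx^i - \bx^j\|^2,
\]
which holds because $\comb=\tfrac1p\sum_j\bx^j$; taking expectations, it suffices to control $\Exp\bigl[\|\bx^i-\bx^j\|^2\bigr]$ for a single pair and then sum over the $\binom{p}{2}$ distinct pairs.

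For a fixed pair, I would invoke the stochastic partial-contraction machinery described after Thm.~\ref{thm:part_cont}: regarding the common signal $\bz$ (equal to $\comb$ for (\ref{eqn:QSGD}), or $\txb$ for (\ref{eqn:EASGD_cont_x})) as an exogenous input, the virtual stochastic system
\[
d\by = \bigl(-\nabla f(\by) + k(\bz - \by)\bigr)dt + \sqrt{\tfrac{\eta}{b}}\,\bB(\by)\,d\bW
\]
admits every agent $\bx^i$ as a particular solution, each driven by its own independent Wiener process $\bW^i$. Its deterministic part has Jacobian $-\nabla^2 f(\by) - k\bI$, which for $k>\bal$ is symmetric with $\bigl(-\nabla^2 f(\by) - k\bI\bigr)\le -(k-\bal)\bI$ uniformly in $\by$ and $\bz$; hence the system is stochastically contracting in the identity metric, so $\bThet=\bM=\bI$, $\beta=1$, and $\lambda=k-\bal$. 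The diffusion obeys $Tr\bigl(\bsig^T\bM\bsig\bigr)=\tfrac{\eta}{b}Tr(\bB\bB^T)=\tfrac{\eta}{b}Tr(\bSig)<\tfrac{\eta C}{b}$ uniformly. Applying Thm.~\ref{thm:stoch} to $(\ba,\bb)=(\bx^i,\bx^j)$ then gives, after exponential transients of rate $2(k-\bal)$, the bound $\Exp\bigl[\|\bx^i-\bx^j\|^2\bigr]\le \tfrac{\eta C}{b(k-\bal)}$. Summing over pairs and dividing by $p$ via the identity above yields $\Exp\bigl[\sum_i\|\bx^i-\comb\|^2\bigr]\le \tfrac{1}{p}\cdot\tfrac{p(p-1)}{2}\cdot\tfrac{\eta C}{b(k-\bal)}=\tfrac{(p-1)C\eta}{2b(k-\bal)}$, which is (\ref{eqn:sync_com}).

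The step that needs the most care is the licence to treat $\bz$ as an exogenous input when applying Thm.~\ref{thm:stoch}, since $\comb$ (resp.\ $\txb$) is itself a functional of all agents and all noises. The clean way to legitimize this is to difference the two agents' equations directly: the $k\bz$ terms are common and cancel, so
\[
d(\bx^i - \bx^j) = \bigl(-\nabla f(\bx^i) + \nabla f(\bx^j) - k(\bx^i - \bx^j)\bigr)dt + \sqrt{\tfrac{\eta}{b}}\bigl(\bB(\bx^i)d\bW^i - \bB(\bx^j)d\bW^j\bigr),
\]
and this difference process has autonomously contracting drift (by the mean-value bound $(\bx^i-\bx^j)^T\bigl(\nabla f(\bx^i)-\nabla f(\bx^j)\bigr)\ge -\bal\|\bx^i-\bx^j\|^2$ together with the $-k\bI$ coupling), while independence of $\bW^i$ and $\bW^j$ makes the quadratic-variation term at most $\tfrac{2\eta}{b}Tr(\bSig)<\tfrac{2\eta C}{b}$ --- precisely the configuration Thm.~\ref{thm:stoch} handles, with the factor $2$ absorbed into the $C/\lambda$ form of its bound. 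Since this argument is indifferent to whether $\bz=\comb$ or $\bz=\txb$, the lemma covers (\ref{eqn:QSGD}) and (\ref{eqn:EASGD_cont_x}) simultaneously.
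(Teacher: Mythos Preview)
Your proposal is correct and follows essentially the same route as the paper: the same stochastic virtual system $d\by = (-\nabla f(\by)+k(\bz-\by))dt + \sqrt{\eta/b}\,\bB(\by)\,d\bW$, the same application of Thm.~\ref{thm:stoch} in the Euclidean metric to obtain the pairwise bound $\Exp[\|\bx^i-\bx^j\|^2]\le \eta C/(b(k-\bal))$, and the same variance identity to pass from pairs to the dispersion about $\comb$. Your closing paragraph on the legitimacy of treating $\bz$ as exogenous---by differencing the agents directly so the common $k\bz$ term cancels---is a point the paper simply glosses over, so your version is if anything slightly more careful.
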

\begin{proof}
    Consider the systems for $i = 1, \hdots, p$
    \begin{equation}
        d\bx^i = \left(-\nabla f(\bx^i) + k\left(\bz - \bx^i\right)\right)dt + \sqrt{\frac{\eta}{b}}\bB(\bx^i) d\bW^i
        \label{eqn:stoch_gen}
    \end{equation}
    which reproduces (\ref{eqn:EASGD_cont_x}) with $\bz = \txb$ and (\ref{eqn:QSGD}) with $\bz = \comb$. Each solution $\bx^i$ to (\ref{eqn:stoch_gen}) is a solution of the stochastic virtual system
    \begin{equation}
        d\by = \left(-\nabla f(\by) + k\left(\bz - \by\right)\right)dt + \sqrt{\frac{\eta}{b}}\bB(\by) d\bW,
        \nonumber
    \end{equation}
    which has contracting deterministic part under the assumptions of the lemma and by Thm.~\ref{thm:sync}. For fixed $i$ and $j$, applying the results of Thm.~\ref{thm:stoch} in the Euclidean metric leads to
    \begin{equation}
        \Exp\left[\Vert\bx^i - \bx^j\Vert^2\right] \leq \frac{C\eta}{b(k - \bal)}
        \label{eqn:sync_bound_indiv}
    \end{equation}
    after exponential transients of rate $2(k-\bal)$. Summing (\ref{eqn:sync_bound_indiv}) over $i$ and $j$ leads to 
     \begin{equation}
         \mathbb{E}\left[\sum_{i < j}\Vert\bx^i - \bx^j\Vert^2\right] \leq \frac{p \left(p-1\right)\eta C}{2 b \left(k - \bal\right)}\nonumber.
        \label{eqn:sync_bound_total}
    \end{equation}
    Finally, as in \citet{tab-slot}, we can rewrite
    \begin{equation}
        \sum_{i<j}\Vert \bx^i - \bx^j\Vert^2 = p \sum_i \Vert \bx^i - \comb\Vert^2\nonumber,
    \end{equation}
    which proves the result. $\square$
\end{proof}
We will refer to (\ref{eqn:sync_com}) as a synchronization condition.

\subsection{Reduction of noise due to synchronization}
\label{ssec:sync_noise_analysis}
We now provide a mathematical characterization of how synchronization reduces the amount of noise felt by the individual QSGD agents. The derivation follows the mathematical procedure first employed in \citet{tab-slot} in the study of neural oscillators.
\begin{thm}{(The effect of synchronization on stochastic gradient noise)}
\label{thm:sync_noise}
Let $\comb_{k, p}(t)$ denote the center of mass trajectory of the continuous-time QSGD system (\ref{eqn:QSGD}) with coupling gain $k$ and $p$ agents. In the simultaneous limits $k \rightarrow \infty$ and $p \rightarrow \infty$, the difference between $\comb_{k, p}(t)$ and a trajectory of the noise-free dynamics
\begin{equation}
    \dot{\bx}_{\text{nf}} = - \nabla f(\bx_{\text{nf}})
\end{equation}
tends to zero, $\lim_{k \rightarrow \infty}\lim_{p \rightarrow \infty} \Vert \comb_{k, p} - \bx_{\text{nf}}\Vert \rightarrow 0$ almost surely, with $\bx_{\text{nf}}(0) = \comb_{k, p}(0)$.
\end{thm}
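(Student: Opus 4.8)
The plan is to exhibit the center-of-mass trajectory as a gradient flow corrupted by two perturbations -- a deterministic one that vanishes as $k \to \infty$ and a stochastic one that vanishes as $p \to \infty$ -- and then invoke the robustness of gradient flow to such perturbations, Assumption~\ref{asmpt:conv}. First I would average the QSGD dynamics (\ref{eqn:QSGD}) over the $p$ agents. Since $\sum_i \left(\comb - \bx^i\right) = p\comb - \sum_i \bx^i = 0$, the coupling term disappears entirely from the center-of-mass equation, leaving
\begin{equation}
    d\comb = -\frac{1}{p}\sum_{i=1}^p \nabla f(\bx^i)\,dt + \sqrt{\frac{\eta}{b}}\,\frac{1}{p}\sum_{i=1}^p \bB(\bx^i)\,d\bW^i. \nonumber
\end{equation}
Adding and subtracting $\nabla f(\comb)$ recasts this as $d\comb = \left(-\nabla f(\comb) + \bP_{k,p}\right)dt + \bsig_p\,d\bW$, where $\bP_{k,p} := \nabla f(\comb) - \frac{1}{p}\sum_i \nabla f(\bx^i)$ and $\bsig_p$ is any square root of $\frac{\eta}{b p^2}\sum_i \bSig(\bx^i)$ (the $\bW^i$ are independent, so the sum of their increments is a martingale with this quadratic variation).

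The stochastic perturbation is controlled immediately: $Tr(\bsig_p \bsig_p^T) = \frac{\eta}{b p^2}\sum_i Tr(\bSig(\bx^i)) \leq \frac{\eta C}{b p}$ using the uniform trace bound $Tr(\bSig) < C$, so $\Vert\bsig_p\Vert \leq \beta_p := \sqrt{\eta C / (b p)}$, which tends to $0$ as $p \to \infty$. For the deterministic perturbation, I would Taylor-expand each $\nabla f(\bx^i)$ about $\comb$. The key cancellation is that the first-order term $\nabla^2 f(\comb)\cdot\frac{1}{p}\sum_i\left(\bx^i - \comb\right)$ vanishes by the very definition of the center of mass, so $\bP_{k,p}$ is governed by the quadratic Taylor remainder. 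Bounding each component of that remainder with Assumption~\ref{asmpt:hess} gives $\Vert\bP_{k,p}\Vert \leq \frac{c Q}{p}\sum_i \Vert\bx^i - \comb\Vert^2$ for a dimension-dependent constant $c$. Taking expectations and invoking the synchronization bound (\ref{eqn:sync_com}) of Lemma~\ref{lem:sync_nomom}, valid for $k > \bal$ after transients of rate $2(k-\bal)$, the factors of $p$ cancel and yield $\Exp[\Vert\bP_{k,p}\Vert] \leq \frac{c Q C \eta}{2 b (k - \bal)}$, a bound uniform in $p$ and tending to $0$ as $k \to \infty$.

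It remains to conclude. The center-of-mass equation is precisely of the form (\ref{eqn:asmp2_y}) with the coupling gain set to zero (equivalently, with the external signal identified with $\comb$ itself so that the coupling term is identically zero), perturbation process $\bP_l = \bP_{k,p}$ parametrized by $l = k$, and noise coefficient $\beta_q = \beta_p$ parametrized by $q = p$; the noise-free flow $\dot{\bx}_{\text{nf}} = -\nabla f(\bx_{\text{nf}})$ is (\ref{eqn:asmp2_x}) with the same zero coupling; and the initial conditions are matched by hypothesis. Since $\lim_{k\to\infty}\Exp[\Vert\bP_{k,p}\Vert] = 0$ uniformly in $p$ and $\lim_{p\to\infty}\beta_p = 0$, Assumption~\ref{asmpt:conv} gives $\lim_{k\to\infty}\lim_{p\to\infty}\Vert\comb_{k,p} - \bx_{\text{nf}}\Vert = 0$ almost surely, which is the claim.

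The main obstacle I anticipate is the estimate on $\Exp[\Vert\bP_{k,p}\Vert]$: it relies on the first-order Taylor term cancelling -- a feature special to the spatial mean and the reason the argument works at all -- on Assumption~\ref{asmpt:hess} to close off the quadratic remainder, and on the $O(1/(k-\bal))$ scaling of the synchronization estimate. A secondary subtlety is that $\bP_{k,p}$ depends on $p$ as well as $k$, whereas Assumption~\ref{asmpt:conv} envisions a perturbation depending only on the ``$l$'' parameter; what makes the application legitimate is that the bound on $\Exp[\Vert\bP_{k,p}\Vert]$ is uniform in $p$. Finally, the synchronization bound and the contraction-based robustness underlying Assumption~\ref{asmpt:conv} both take hold only after exponential transients, but these decay at rates $O(k)$ and $O(1)$ respectively and so do not affect the limiting statement.
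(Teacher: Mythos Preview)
Your proposal is correct and follows essentially the same route as the paper: average the agent dynamics so the coupling cancels, split the resulting center-of-mass SDE into the gradient flow plus a drift disturbance $\beps$ and an averaged noise term, bound the noise by $\mathcal{O}(1/p)$ via the trace bound, bound $\Exp[\Vert\beps\Vert]$ by Taylor-expanding componentwise (using Assumption~\ref{asmpt:hess} on the integral remainder and the first-order cancellation at the spatial mean) and then applying the synchronization estimate (\ref{eqn:sync_com}), and finally invoke Assumption~\ref{asmpt:conv}. Your dimension constant $c$ is the paper's $\sqrt{n}/2$, and your observation that the $\beps$-bound is uniform in $p$ (the paper's $(p-1)/p$ prefactor) is exactly what licenses the iterated limit.
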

\begin{proof}
Summing the stochastic dynamics (\ref{eqn:QSGD}) over $p$, we find
\begin{equation}
    d\comb = \left[-\frac{1}{p}\sum_i \nabla f(\bx^i)\right]dt + \sqrt{\frac{\eta}{bp^2}}\sum_i \bB(\bx^i) d\bW^i.
    \label{eqn:com_ito_1}
\end{equation}
To make clear the dependence of the dynamics on $\comb$, we define the disturbance term
\begin{equation}
    \beps = -\frac{1}{p}\sum_i \nabla f(\bx^i) + \nabla f(\comb)\nonumber,
\end{equation}
so that we can rewrite (\ref{eqn:com_ito_1}) as
\begin{equation}
    d\comb = \left[-\nabla f(\comb) + \beps\right]dt + \sqrt{\frac{\eta}{bp^2}}\sum_i \bB(\bx^i) d\bW^i.
    \label{eqn:com_ito_2}
\end{equation}
Each term $\sqrt{\frac{\eta}{bp^2}}\bB(\bx^i) d\bW^i$ is a Gaussian random variable with covariance $\frac{\eta}{bp^2}\bSig(\bx^i)$, and each $d\bW^i$ is independent of all other $d\bW^j$. Hence the sum over the noise terms in (\ref{eqn:com_ito_2}) can also be written as a single Gaussian random variable with covariance $\frac{\eta}{bp^2}\sum_i\bSig(\bx^i)$,
\begin{equation}
    \sqrt{\frac{\eta}{bp^2}}\sum_i \bB(\bx^i) d\bW^i = \sqrt{\frac{\eta}{bp^2}}\bT d\bW.
    \label{eqn:avg_noise}
\end{equation}
where $\bT = \bT(\bx^1, \hdots, \bx^p)$ and $\bT\bT^T = \sum_i \bSig(\bx^i)$. (\ref{eqn:avg_noise}) leads to an additional simplification of (\ref{eqn:com_ito_2}),
\begin{equation}
    d\comb = \left[-\nabla f(\comb) + \beps\right]dt + \sqrt{\frac{\eta}{bp^2}}\bT d\bW.
    \label{eqn:com_ito_3}
\end{equation}
(\ref{eqn:com_ito_3}) shows that the effect of the additive noise is eliminated as the number of agents $p\rightarrow \infty$\footnote{\normalsize Indeed, the covariance $\frac{\eta}{bp^2}\sum_i \bSig(\bx^i) \leq \frac{\eta}{bp}\bar{\bSig}$ where $\bar{\bSig} = \max_i \bSig(\bx^i)$ and the $\max$ and $\leq$ are with respect to the positive semidefinite order. The covariance $\frac{\eta}{bp}\bar{\bSig}$ tends to zero as $p\rightarrow \infty$, so that Gaussian random variables drawn from a distribution with this covariance will become increasingly concentrated around zero with increasing $p$. Because the true covariance $\frac{\eta}{bp^2}\bT\bT^T$ is less positive semidefinite, random variables drawn from the true distribution will too become concentrated around zero as $p\rightarrow \infty$.}. We now let $\bF_j$ denote the gradient of $\left(-\nabla f(\bx)\right)_j$, and we let $\bH_j$ denote its Hessian. We apply the Taylor formula with integral remainder to $\left(-\nabla f(\bx)\right)_j$,
\begin{align}
    \left(-\nabla f(\bx^i)\right)_j &+ \left(\nabla f(\comb)\right)_j - \bF_j^T(\comb)(\bx^i - \comb)\nonumber\\
    &= \int_0^1 (1-s)\left(\bx^i - \comb\right)^T\bH_j\left((1-s)\bx^i + s \comb\right)\left(\bx^i - \comb\right).
    \label{eqn:bound}
\end{align}
Summing (\ref{eqn:bound}) over $i$ and applying the assumed bound $\bH_j \leq Q \bI$ leads to the inequality
\begin{equation}
    \left|p \left(\nabla f(\comb)\right)_j - \sum_i \left(\nabla f(\bx^i)\right)_j\right| \leq \frac{Q}{2}\sum_i\Vert \bx^i - \comb\Vert^2\nonumber.
\end{equation}
The left-hand side of the above inequality is $p|\beps_j|$. Squaring both sides and summing over $j$ provides a bound on $p^2\Vert \beps\Vert^2$. Taking a square root of this bound, we find
\begin{equation}
    \Vert \beps \Vert \leq \frac{\sqrt{n}Q}{2p}\sum_i \Vert \bx^i - \comb\Vert^2,
    \nonumber
\end{equation}
where the factor of $\sqrt{n}$ originates from the sum over the components of $\beps$. Performing an expectation over the noise $d\bW(s)$ for all $s < t$ and using the synchronization condition in (\ref{eqn:sync_com}), we conclude that after exponential transients of rate $2(k-\bal)$,
\begin{equation}
    \mathbb{E}\left[\left\Vert\beps\right\Vert\right] \leq \frac{(p-1) \sqrt{n} Q C \eta}{4p \left(k - \bal\right) b}.
    \label{eqn:eps_bound}
\end{equation}
The bound in (\ref{eqn:eps_bound}) depends on the synchronization rate of the agents $k - \bal$, the dimensionality of space $n$, the bound on the third derivative of the objective $Q$, and the bound on the noise strength $\frac{\eta}{b}C$. In the limit of large $p$, the dependence on $p$ becomes negligible. The expected effect of the disturbance term $\beps$ tends to zero as the coupling gain $k$ tends to infinity, corresponding to the fully synchronized limit. 

By Assumption \ref{asmpt:conv} and Thm.~\ref{thm:stoch}, as $k \rightarrow \infty$ and $p \rightarrow \infty$, the difference between trajectories of (\ref{eqn:com_ito_3}) and the unperturbed, noise-free system tends to zero almost surely, as the effects of both the stochastic disturbance $\beps$ and the additive noise term are eliminated in this simultaneous limit.
\end{proof}

\subsection{Discussion}
\label{ssec:sync_noise_disc}
Thm.~\ref{thm:sync_noise} demonstrates that for distributed SGD algorithms, roughly speaking, the noise strength is set by the ratio parameter $\frac{\eta}{bp}$ at the expense of a distortion term which tends to zero with synchronization. Whether this noise reduction is a benefit or a drawback for non-convex optimization depends on the problem at hand. 

If the use of a stochastic gradient is purely as an approximation of the true gradient -- for example, due to single-node or single-GPU memory limitations -- then synchronization can be seen as improving this approximation and eliminating undesirable noise while simultaneously parallelizing the optimization problem. The analysis in this section then gives rigorous bounds on the magnitude of noise reduction. The $\beps$ term could be measured in practice to understand the empirical size of the distortion, and $k$ could be increased until $\beps$ tends approximately to zero and the noise is reduced to a desired level.

On the other hand, many studies have reported the importance of stochastic gradient noise in deep learning, particularly in the context of generalization performance \citep{Poggio2017, Zhu2018, soatto1, understanding-DL}. Furthermore, large batches are known to cause issues with generalization, and this has been hypothesized to be due to a reduction in the noise magnitude due to a higher $b$ in the ratio $\frac{\eta}{b}$ \citep{Keskar2016}. In this context, reduction of noise may be undesirable, and one may only be interested in parallelization of the problem. The above analysis then suggests choosing $k$ high enough such that the quorum variable represents a meaningful average of the parameters, but low enough that the noise in the SGD iterations is not reduced. Indeed, in Sec.~\ref{sec:dnn}, we will find the best generalization performance for low values of $k$ which still result in convergence of the quorum variable. For deep networks, the level of synchronization for a given value of $k$ will be both architecture and dataset-dependent. 

We remark that the condition in Thm.~\ref{thm:sync} is merely a sufficient condition for synchronization, and synchronization may occur for significantly lower values of $k$ than predicted by contraction in the Euclidean metric. However, independent of when synchronization exactly occurs, so long as there is a fixed upper bound as in (\ref{eqn:sync_com}), the results in this section will apply with the corresponding estimate of $\Exp[\Vert\beps\Vert]$.

\subsection{Extension to multiple learning rates}
Our analysis can be extended to the case when each individual agent has a \emph{different} learning rate $\eta_i$ (or equivalently, different batch size), and thus a different noise level. In effect, this is because each agent still follows the same dynamics, though with different integration errors, and at a different rate. In this case, the synchronization condition (\ref{eqn:sync_com}) is modified to
\begin{equation}
    \Exp\left[\sum_i \Vert \bx^i - \comb \Vert^2\right] \leq \frac{C}{2 p b (k - \bar{\lambda)}}\sum_{i < j}\left(\eta_i + \eta_j\right),
    \nonumber
\end{equation}
so that
\begin{equation}
    \Exp\left[\Vert \beps \Vert\right] \leq \frac{\sqrt{n} Q C}{4 p^2 b (k - \bal)}\sum_{i < j}\left(\eta_i + \eta_j\right).
\end{equation}
The noise term $\sum_i \sqrt{\frac{\eta^i}{bp^2}}\bB(\bx^i) d\bW^i$ becomes a sum of $p$ independent Gaussians each with covariance $\frac{\eta^i}{b p^2}\bSig(\bx^i)$, and can be written as a single Gaussian random variable $\sqrt{\frac{1}{bp^2}}\bT d\bW$ with $\bT\bT^T = \sum_i \eta^i \bSig(\bx^i)$. An analogous argument as given in Sec.~\ref{ssec:sync_noise_analysis} shows that the effect of this additive noise will tend to zero as $p\rightarrow \infty$. This could allow, for example, for multiresolution optimization, where agents with larger learning rates may help avoid sharper local minima, saddle points, and flat regions of the parameter space, while agents with finer learning rates may help converge to robust local minima which generalize well. Standard learning rate schedules can also be applied agent-wise using the validation loss of individual agents, rather than decreasing all learning rates using the validation loss of the quorum variable.

\subsection{Extension to momentum methods}
Our analysis can also be extended to momentum methods, modeled using the differential equation \citep{Su2015}
\begin{equation*}
    \ddot{\bx}^i + \mu(t)\dot{\bx}^i + \nabla f(\bx^i) = 0,
\end{equation*}
in component-wise form
\begin{align*}
    \dot{\bx}_1^i &= \bx_2^i,\\
    \dot{\bx}_2^i &= -\nabla f(\bx_1^i) - \mu(t)\bx_2^i.
\end{align*}
Coupling the agents in both position and velocity leads to the dynamics,
\begin{align}
    \label{eqn:qsgd_mom_1}
    \dot{\bx}_1^i &= \bx_2^i + k_1 (\comb_1 - \bx_1^i),\\
    \label{eqn:qsgd_mom_2}
    \dot{\bx}_2^i &= -\nabla f(\bx_1^i) - \mu(t)\bx_2^i + k_2 (\comb_2 - \bx_2^i),
\end{align}
where $\comb_l = \frac{1}{p}\sum_j \bx_l^j$. 
\begin{lem}
  \label{lem:qsgd_mom_sync}
  Consider the QSGD with momentum system given by (\ref{eqn:qsgd_mom_1}) and (\ref{eqn:qsgd_mom_2}). Assume that $f$ is $\underline{\lambda}$-strongly convex and $\bar{\lambda}$-smooth. For $k_1 > \frac{1}{4\left(\inf_t \mu(t) + k_2\right)}\max\left(\left(1-\bal\right)^2, \left(1-\ubal\right)^2\right)$, the individual $\bx^i$ systems globally exponentially synchronize with rate $\xi$, where
  \begin{equation}
      \xi \geq \frac{k_1 + \inf_t \mu(t) + k_2}{2} - \sqrt{\left(\frac{k_1 - (\inf_t \mu(t) + k_2)}{2}\right)^2 + \frac{\max\left(\left(1-\bal\right)^2, \left(1-\ubal\right)^2\right)}{4}}
      \label{eqn:qsgd_mom_sync_bound}
  \end{equation}
\end{lem}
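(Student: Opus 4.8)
The plan is to repeat the partial‑contraction argument of Thm.~\ref{thm:sync}, now in the doubled state space $(\bx_1,\bx_2)\in\mathbb{R}^{2n}$. First I would introduce the virtual system
\begin{align*}
  \dot{\by}_1 &= \by_2 + k_1(\bz_1 - \by_1),\\
  \dot{\by}_2 &= -\nabla f(\by_1) - \mu(t)\by_2 + k_2(\bz_2 - \by_2),
\end{align*}
with $\bz_1,\bz_2$ treated as external inputs. Setting $\bz_1=\comb_1$, $\bz_2=\comb_2$ and $(\by_1,\by_2)=(\bx_1^i,\bx_2^i)$ recovers (\ref{eqn:qsgd_mom_1})--(\ref{eqn:qsgd_mom_2}), so every agent trajectory is a particular solution of one and the same virtual system. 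By Thm.~\ref{thm:part_cont}, it then suffices to show that this virtual system is contracting in the Euclidean metric ($\bThet=\bI$) with the rate $\xi$ asserted in (\ref{eqn:qsgd_mom_sync_bound}); global exponential synchronization of the $\bx^i$ at that rate follows, with the quantitative bound (\ref{eqn:contr_forget}).

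The Jacobian of the virtual system is the block matrix
\[
  \bJ = \begin{pmatrix} -k_1\bI & \bI \\ -\nabla^2 f(\by_1) & -(\mu(t)+k_2)\bI \end{pmatrix},
  \qquad
  \bJ_s = \begin{pmatrix} -k_1\bI & \tfrac{1}{2}\bigl(\bI-\nabla^2 f(\by_1)\bigr) \\ \tfrac{1}{2}\bigl(\bI-\nabla^2 f(\by_1)\bigr) & -(\mu(t)+k_2)\bI \end{pmatrix}.
\]
Since $\nabla^2 f(\by_1)$ is symmetric, an orthogonal change of basis block‑diagonalizes $\bJ_s$ into $2\times2$ blocks
\[
  M_\lambda = \begin{pmatrix} -k_1 & \tfrac{1}{2}(1-\lambda) \\ \tfrac{1}{2}(1-\lambda) & -(\mu(t)+k_2) \end{pmatrix},
\]
one for each eigenvalue $\lambda$ of the Hessian, with $\lambda\in[\ubal,\bal]$ by $\ubal$‑strong convexity and $\bal$‑smoothness. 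The largest eigenvalue of $M_\lambda$ equals
\[
  -\frac{k_1+\mu(t)+k_2}{2} + \sqrt{\left(\frac{k_1-(\mu(t)+k_2)}{2}\right)^2 + \frac{(1-\lambda)^2}{4}}.
\]
Two monotonicity observations close the estimate: this quantity is increasing in $(1-\lambda)^2$, which (being convex in $\lambda$) is maximized over $[\ubal,\bal]$ at an endpoint, producing the factor $\max\bigl((1-\bal)^2,(1-\ubal)^2\bigr)$; and it is nonincreasing in $\mu(t)+k_2$, so replacing $\mu(t)$ by $\inf_t\mu(t)$ gives a bound uniform in $t$. Hence $\bJ_s(\by,t)\le -\xi\bI$ for all $\by$ and $t$ with $\xi$ as in (\ref{eqn:qsgd_mom_sync_bound}), and the stated lower bound on $k_1$ is precisely the inequality $k_1\bigl(\inf_t\mu(t)+k_2\bigr)>\tfrac{1}{4}\max\bigl((1-\bal)^2,(1-\ubal)^2\bigr)$ that forces $\xi>0$.

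I expect the only genuine subtlety to be the time dependence of $\mu(t)$: the generalized Jacobian is time‑varying, so I would stress that we use a \emph{constant} metric transformation ($\bThet=\bI$) and that the negative‑definiteness bound $\bJ_s\le-\xi\bI$ is required to — and does — hold uniformly in $\by$ and $t$, so no $\dot{\bThet}$ terms enter the definition of contraction. The remaining pieces (the $2\times2$ eigenvalue formula and the two monotonicity checks) are routine, and the optimization over the metric is sidestepped entirely by the choice $\bThet=\bI$, at the (mild) cost of the factor $\max\bigl((1-\bal)^2,(1-\ubal)^2\bigr)$ rather than a sharper weighted expression.
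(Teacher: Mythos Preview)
Your proposal is correct and follows essentially the same approach as the paper: the virtual system is identical, and both arguments establish contraction of its $2n$-dimensional Jacobian in the Euclidean metric. The only difference is presentational: where the paper invokes the block-matrix contraction condition of \citet{Wang2005} for the threshold on $k_1$ and the estimate of \citet{slot-modular}, Example~3.8, for the rate bound, you carry out the same computation explicitly by block-diagonalizing $\bJ_s$ along the Hessian eigenbasis and reading off the $2\times2$ eigenvalues, which makes the argument self-contained.
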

\begin{proof}
    The virtual system
    \begin{align}
        \label{eqn:mom_virt_1}
        \dot{\by}_1 &= \by_2 + k_1 (\comb_1 - \by_1),\\
        \dot{\by}_2 &= -\nabla f(\by_1) - \mu(t)\by_2 + k_2 (\comb_2 - \by_2),
        \label{eqn:mom_virt_2}
    \end{align}
    has system Jacobian
    \begin{equation*}
        \mathbf{J} = \begin{pmatrix} -k_1\mathbf{I} & \mathbf{I} \\ -\nabla^2 f(\by_1) & -\left(\mu(t) + k_2\right)\mathbf{I} \end{pmatrix},
    \end{equation*}
    and will be contracting for $\left(\inf_{t}\mu(t) + k_2\right)k_1 > \sup_\bx\left(\sigma^2\left(\frac{1}{2}\left(-\nabla^2 f(\bx) + \mathbf{I}\right)\right)\right)$, where $\sigma^2(\cdot)$ denotes the largest squared singular value \citep{Wang2005}. Because $\bI - \nabla^2f$ is symmetric, the square singular values are simply the square eigenvalues. This leads to the condition $\left(\inf_{t}\mu(t) + k_2\right)k_1 > \frac{1}{4}\max\left((1-\bal^2), (1-\ubal)^2\right)$, which may be rearranged to yield the condition in the theorem.
    
    (\ref{eqn:mom_virt_1}) and (\ref{eqn:mom_virt_2}) also admit the $\bx_l^i$ as particular solutions, so that the agents globally exponentially synchronize with a rate $\xi = |\lambda_{\text{max}}(\bJ)|$. The lower bound on $\xi$ can be obtained by application of the result in \citet{slot-modular}, Example 3.8.
\end{proof}
Hence, a bound similar to (\ref{eqn:sync_com}) can be derived just as in Lemma \ref{lem:sync_nomom}. Because the $\comb_1$ dynamics are linear, and because the $\comb_2$ dynamics are only nonlinear through the gradient of the loss, Assumption \ref{asmpt:hess} does not need to be modified. For $\inf_t \mu(t) > 0$, $k_2$ can be set to zero, so that coupling is only through the position variables.

\section{An alternative view of distributed stochastic gradient descent}
\label{sec:kleinberg}
In this section, we connect the above discussion of synchronization and noise reduction with the analysis in~\citet{kleinberg}, which interprets SGD as performing gradient descent on a smoothed loss in expectation. Specifically, we show that the reduction of noise due to synchronization can be viewed as a reduction in the smoothing of the loss function. This provides further geometrical intuition for the effect of synchronization on distributed SGD algorithms. It furthermore sheds light as to why one may want to use low values of $k$ to prevent noise reduction in learning problems involving generalization, where optimization of the empirical risk rather than the expected risk introduces spurious defects into the loss function that may be removed by sufficient smoothing.

Defining the auxiliary sequence $\by_t = \bx_t - \eta \nabla f(\bx_t)$  and comparing with (\ref{eqn:sgd_disc}) shows that $\bx_{t+1} = \by_t - \frac{\eta}{\sqrt{b}} \bzeta_t$, yielding
\begin{equation}
    \by_{t+1} = \by_t - \eta \nabla f\left(\by_t - \frac{\eta}{\sqrt{b}} \bzeta_t\right) - \frac{\eta}{\sqrt{b}} \bzeta_t \nonumber,
    \label{eqn:y_disc}
\end{equation}
so that
\begin{equation}
    \Exp_{\bzeta_t}\left[\by_{t+1}\right] = \by_t - \eta \nabla \Exp_{\bzeta_t}\left[f\left(\by_t - \frac{\eta}{\sqrt{b}} \bzeta_t\right)\right] \nonumber.
    \label{eqn:y_disc_exp}
\end{equation}
This demonstrates that the $\by$ sequence performs gradient descent on the loss function convolved with the $\frac{\eta}{\sqrt{b}}$-scaled noise in expectation\footnote{\normalsize In \citet{kleinberg}, the authors group the factor of $\sqrt{1/b}$ with the covariance of the noise.}. Using this argument, it is shown in \citet{kleinberg} that SGD can converge to minimizers for a much larger class of functions than just convex functions, though the convolution operation can disturb the locations of the minima.

\subsection{The effect of synchronization on the convolution scaling}
The analysis in Sec.~\ref{sec:sync_noise} suggests that synchronization of the $\bx^i$ variables should reduce the convolution prefactor for a $\by$ variable related to the center of mass, and we now make this intuition more precise for the QSGD algorithm. We have that
\begin{equation}
    \Delta \bx^i_t = - \eta \nabla f(\bx^i_t) + \eta k (\comb_t - \bx^i_t) - \frac{\eta}{\sqrt{b}} \bzeta_t^i\nonumber,
\end{equation}
so that
\begin{align}
    \Delta \comb_t &= -\eta \nabla f(\comb_t) + \eta \beps_t - \frac{\eta}{\sqrt{bp}}\bzeta_t\nonumber,
\end{align}
with $\beps_t = \nabla f(\comb_t) - \frac{1}{p}\sum_i \nabla f(\bx^i_t)$ as usual. Define the auxiliary variable $\comyb_t = \comb_t - \eta \nabla f(\comb_t)$, so that
\begin{equation}
    \comb_{t+1} = \comyb_t + \eta \beps_t - \frac{\eta}{\sqrt{bp}}\bzeta_t.
    \label{eqn:comb_y_nq}
\end{equation}
Equation (\ref{eqn:comb_y_nq}) can then be used to state
\begin{align}
    \comyb_{t+1} &= \comyb_t - \eta \nabla f(\comb_{t+1}) + \eta \beps_t - \frac{\eta}{\sqrt{bp}}\bzeta_t\nonumber,\\
    &= \comyb_t - \eta \nabla f\left(\comyb_t - \frac{\eta}{\sqrt{bp}}\bzeta_t + \eta \beps_t\right) + \eta \beps_t - \frac{\eta}{\sqrt{bp}}\bzeta_t\nonumber.
\end{align}
Taylor expanding the gradient term, we find
\begin{equation}
    \nabla f\left(\comyb_t - \frac{\eta}{\sqrt{bp}}\bzeta_t + \eta \beps_t\right) = \nabla f\left(\comyb_t - \frac{\eta}{\sqrt{bp}}\bzeta_t\right) + \eta\nabla^2f\left(\comyb_t - \frac{\eta}{\sqrt{bp}}\bzeta_t\right)\beps_t + \mathcal{O}(\eta^2)\nonumber,
\end{equation}
which alters the discrete $\comy$ update to
\begin{align}
    \Delta \comyb_{t} &= -\eta \nabla f\left(\comyb_t - \frac{\eta}{\sqrt{bp}}\bzeta_t\right) + \eta\left(1 - \eta \nabla^2 f\left(\comyb_t - \frac{\eta}{\sqrt{bp}}\bzeta_t\right)\right)\beps_t - \frac{\eta}{\sqrt{bp}}\bzeta_t + \mathcal{O}(\eta^3).
    \label{eqn:comb_y_nq_disc_final}
\end{align}

Equation (\ref{eqn:comb_y_nq_disc_final}) says that, in expectation, $\comy$ performs gradient descent on a convolved loss with noise scaling reduced by a factor of $\frac{1}{\sqrt{p}}$. The reduced scaling comes at the expense of the usual disturbance term $\beps$, which decreases to zero with increasing synchronization in expectation over the noise $\zeta_s$ for $s < t$. Equation (\ref{eqn:comb_y_nq_disc_final}) differs from the non-distributed case by an additional $\mathcal{O}(\eta^2)$ factor of the Hessian.

\subsection{Discussion}
\label{ssec:klein_disc}
To better understand the interplay of synchronization and noise in SGD, we can consider several limiting cases. Consider a choice of $\eta$ corresponding to a fairly high noise level, so that the loss function is sufficiently smoothed for the iterates of SGD ($k= 0$) to avoid local minima, saddle points, and flat regions, but so that the iterates would not reliably converge to a desirable region of parameter space, such as a deep and robust minimum.

For $k\rightarrow \infty$ and $p$ sufficiently large, the quorum variable will effectively perform gradient descent on a minimally smoothed loss, and will converge to a local minimum of the true loss function close to its initialization. Due to the strong coupling, the agents will likely get pulled into this minimum, leading to convergence as if a single agent had been initialized using deterministic gradient descent at $\comb(t=0)$, despite the high value of $\eta$.

With an intermediate value of $k$ so that the agents remain in close proximity to each other, but not so strong that $\Vert \beps \Vert \rightarrow 0$, the $\bx$ variables will be concentrated around the minima of the smoothed loss (the coupling will pull the agents together, but because $\Vert \beps \Vert \neq 0$, the smoothing will not be reduced in the sense of (\ref{eqn:comb_y_nq_disc_final})). The stationary distribution of SGD is thought to be biased towards concentration around degenerate minima of high volume \citep{Poggio2019}; the coupling force should thus amplify this effect, and lead to an accumulation of agents in wider and deeper minima in which all agents can approximately fit. Eventually, if sufficiently many agents arrive in a single minimum, it will be extremely difficult for any one agent to escape, leading to a consensus solution chosen by the agents even at a high noise level.

\subsection{Numerical simulations in nonconvex optimization}
\label{ssec:klein_sims}
In this subsection, we consider simulations on a model one-dimensional nonconvex loss function, as well as one possible high-dimensional generalization. There are several goals of the discussion. The first is to show that the intuition presented in Sec.~\ref{ssec:klein_disc} is correct. The second is to provide a setting where visualization of the loss function, its analytically smoothed counterpart, and the distribution of possible convergent points is straightforward. The third is to elucidate qualitative trends in distributed nonconvex optimization as a function of $k$ in low- and high-dimensional settings, and to show to what extent properties of the low-dimensional setting translate to the high-dimensional setting. We consider the loss function
\begin{equation}
    f(x) = \frac{\left(x^4-4 x^2+ \frac{1}{5}x + \frac{2}{5} \left(3 \sin (20 x)-\frac{7}{2} \sin (2 \pi  x)+\cos \left(\frac{10 e x}{3}\right)\right)\right)}{F},
    \label{eqn:double_well}
\end{equation}
where the sinusoidal oscillations in (\ref{eqn:double_well}) introduce spurious local minima. The constant factor $F \in \mathbb{R}^+$ is used for numerical stability for a wider range of $\eta$ values, in order to reduce the large gradient magnitudes introduced by the high-frequency modes. We simulate the dynamics of QSGD using a forward Euler discretization,
\begin{equation}
    x^i_{t+1} = x^i_t - \eta \nabla f(x^i) + \eta k (\com_t - x^i_t) - \eta \zeta^i_t \hspace{20mm} i = 1 \hdots p,
    \label{eqn:euler_disc}
\end{equation}
with $f(x)$ given by (\ref{eqn:double_well}). We include $1000$ agents in each of $250$ simulations per $k$ value. Each simulation is allowed to run for $20,000$ iterations with $\eta = .15$\footnote{\normalsize We choose a relatively high value of $\eta$ so that the convolved loss will be qualitatively different from the true loss to a degree that is visible by eye. This enables us to distinguish convergence to true minima from convergence to minima of the convolved loss. An alternative and equivalent choice would be to choose $\eta$ smaller, with a correspondingly wider distribution of the noise.}. The corresponding distributions of final points, computed via a kernel density estimate, are plotted over a range of $k$ values in Fig.~\ref{fig:kleinberg}. In each subfigure, the true loss function is plotted in orange and the loss function convolved with the noise distribution is plotted in blue. The loss functions are normalized so they can appear on the same scale as the distributions, and the $y$ scale is thus omitted. The agents are initialized uniformly over the interval $[-3, 3]$, and each experiences an i.i.d. uniform noise term $\zeta^i_t \sim U(-1.5, 1.5)$ per iteration. $F$ is fixed at $150$.

\begin{figure}

    \begin{tabular}{cc}
        \begin{subfigure}{.5\textwidth}
            \centering
            \includegraphics[width=\textwidth]{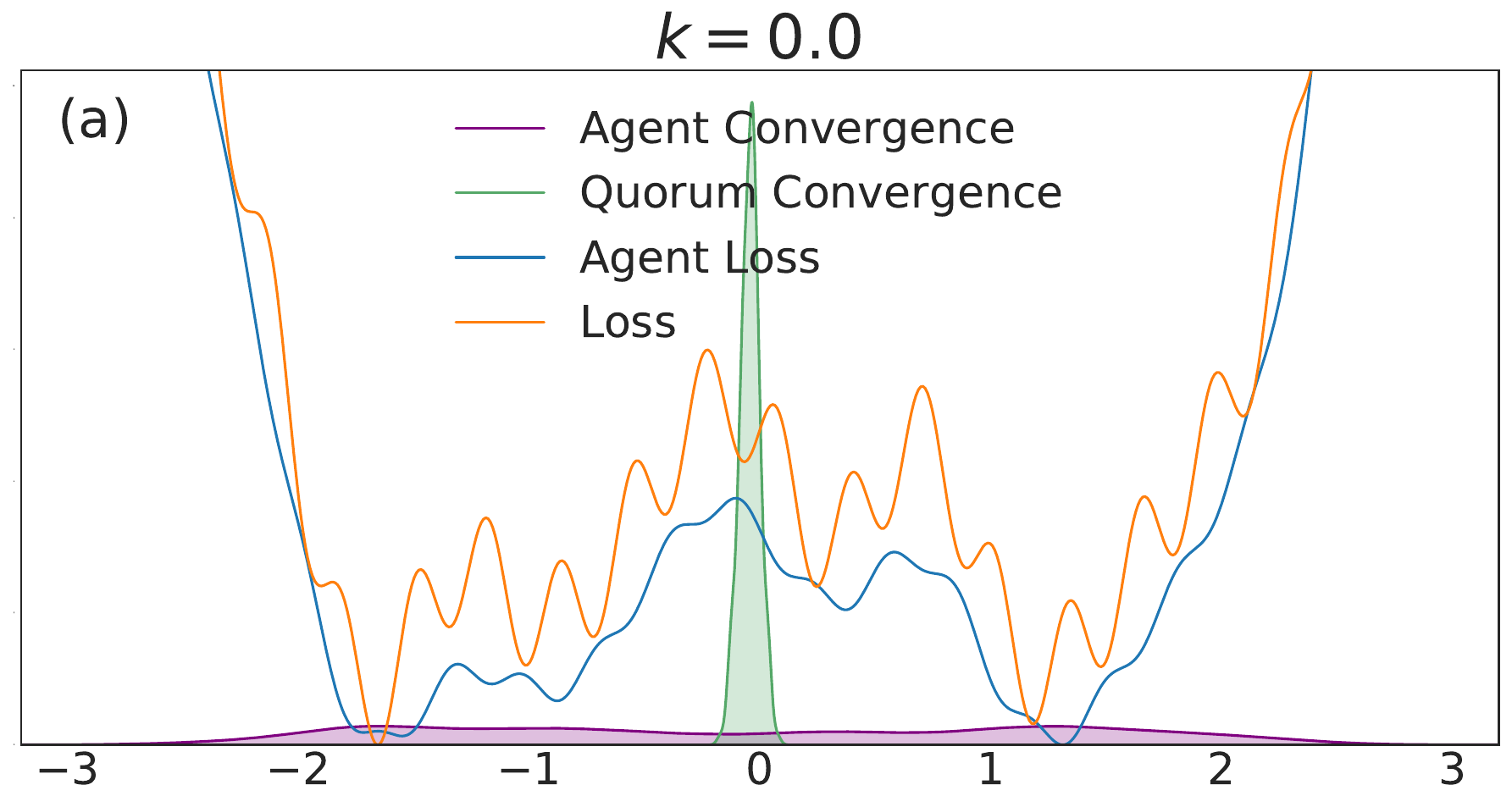}
        \end{subfigure}&
        
        \begin{subfigure}{.5\textwidth}
            \centering
            \includegraphics[width=\textwidth]{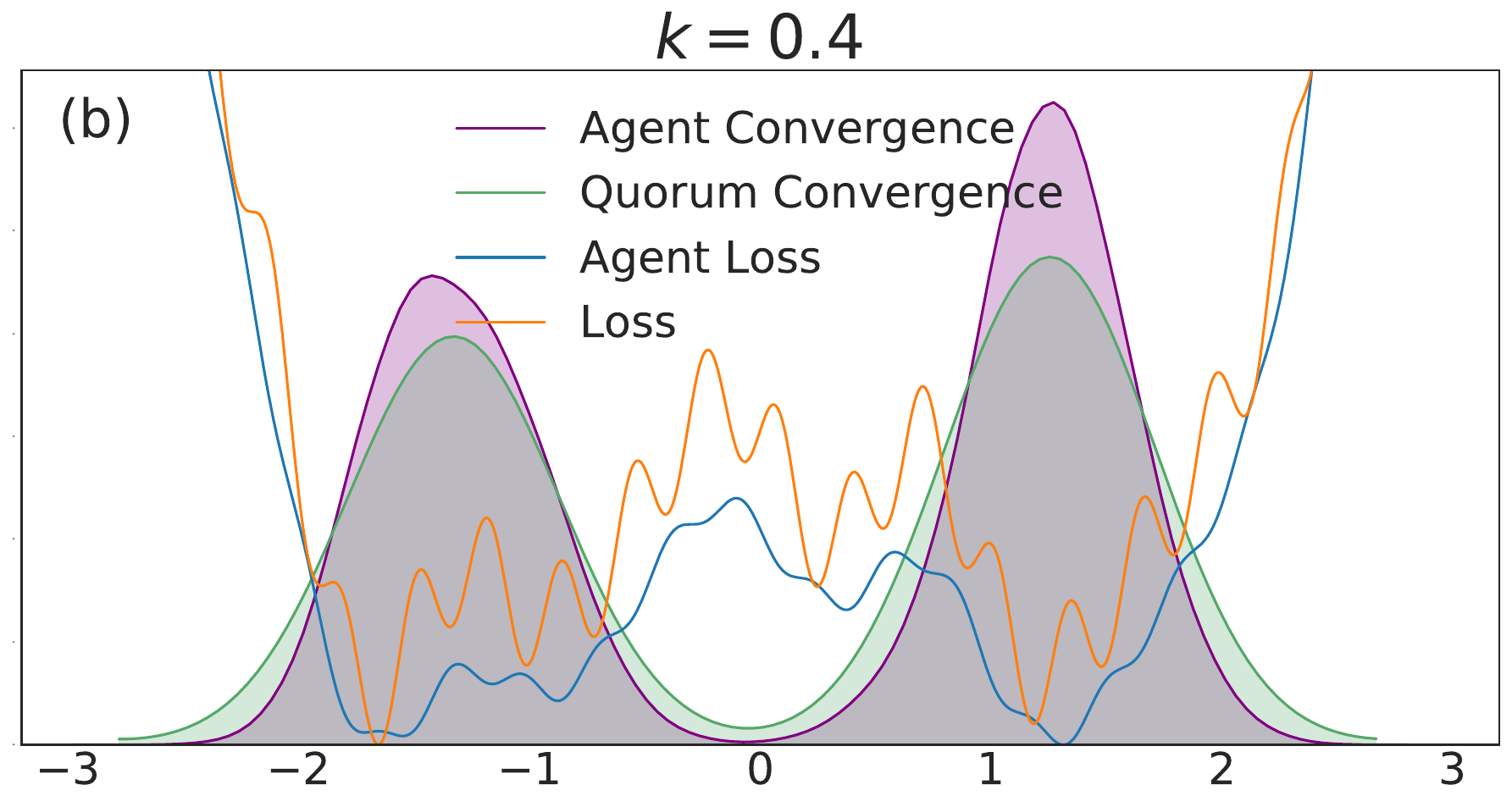}
        \end{subfigure}\\
        
        \begin{subfigure}{.5\textwidth}
            \centering
            \includegraphics[width=\textwidth]{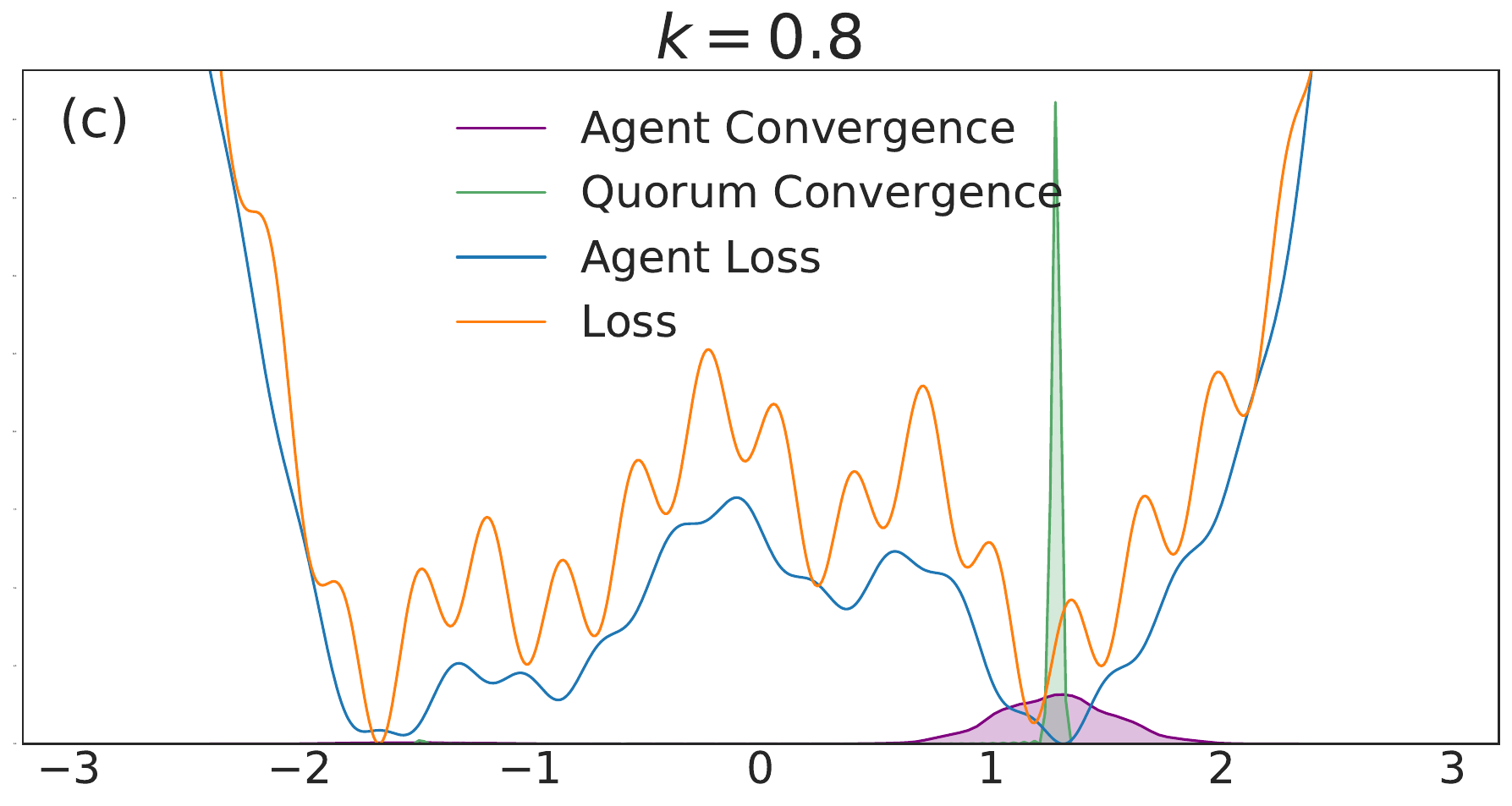}
        \end{subfigure}&
        
        \begin{subfigure}{.5\textwidth}
            \centering
            \includegraphics[width=\textwidth]{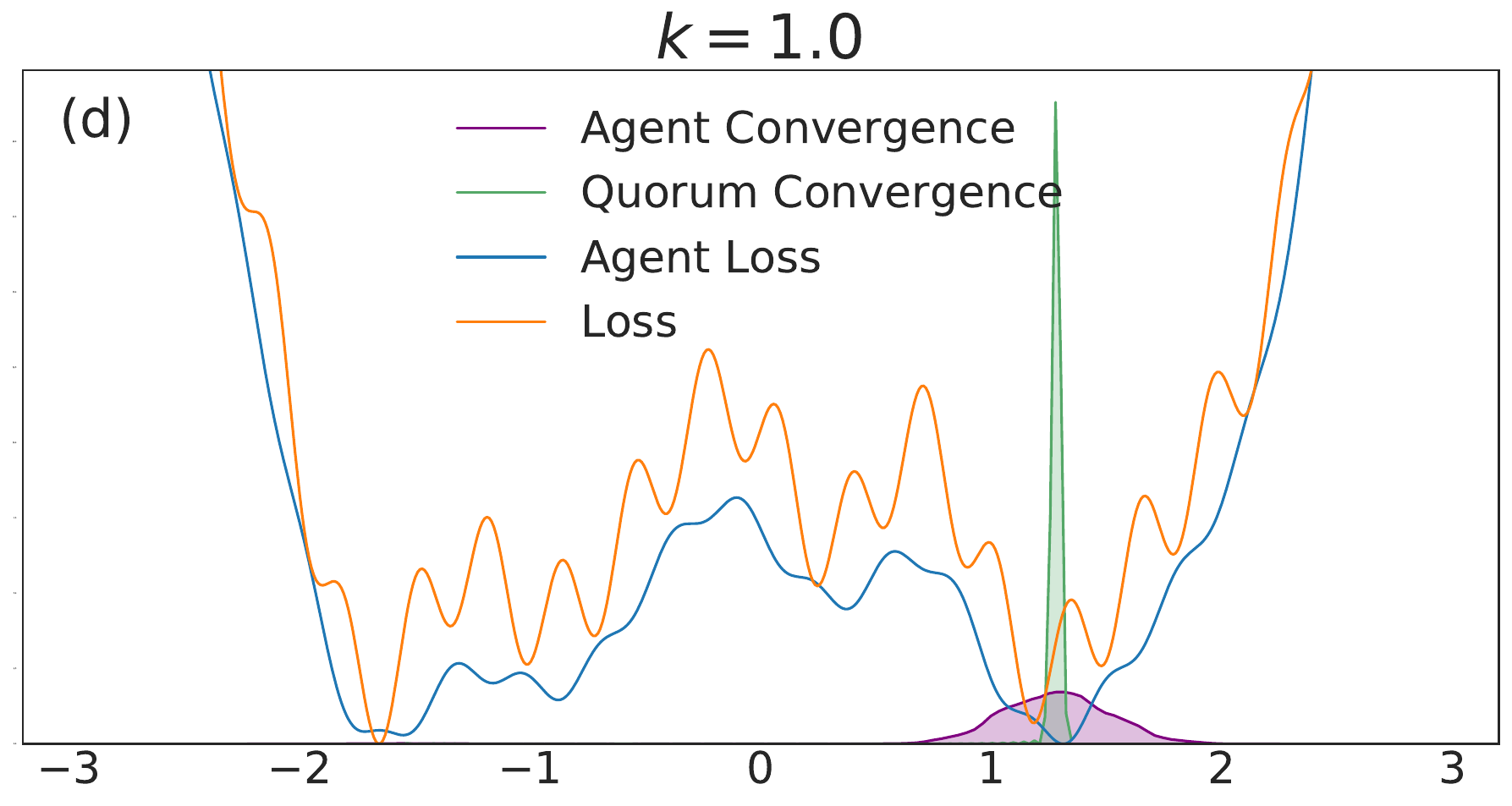}
        \end{subfigure}\\
        
        \begin{subfigure}{.5\textwidth}
            \centering
            \includegraphics[width=\textwidth]{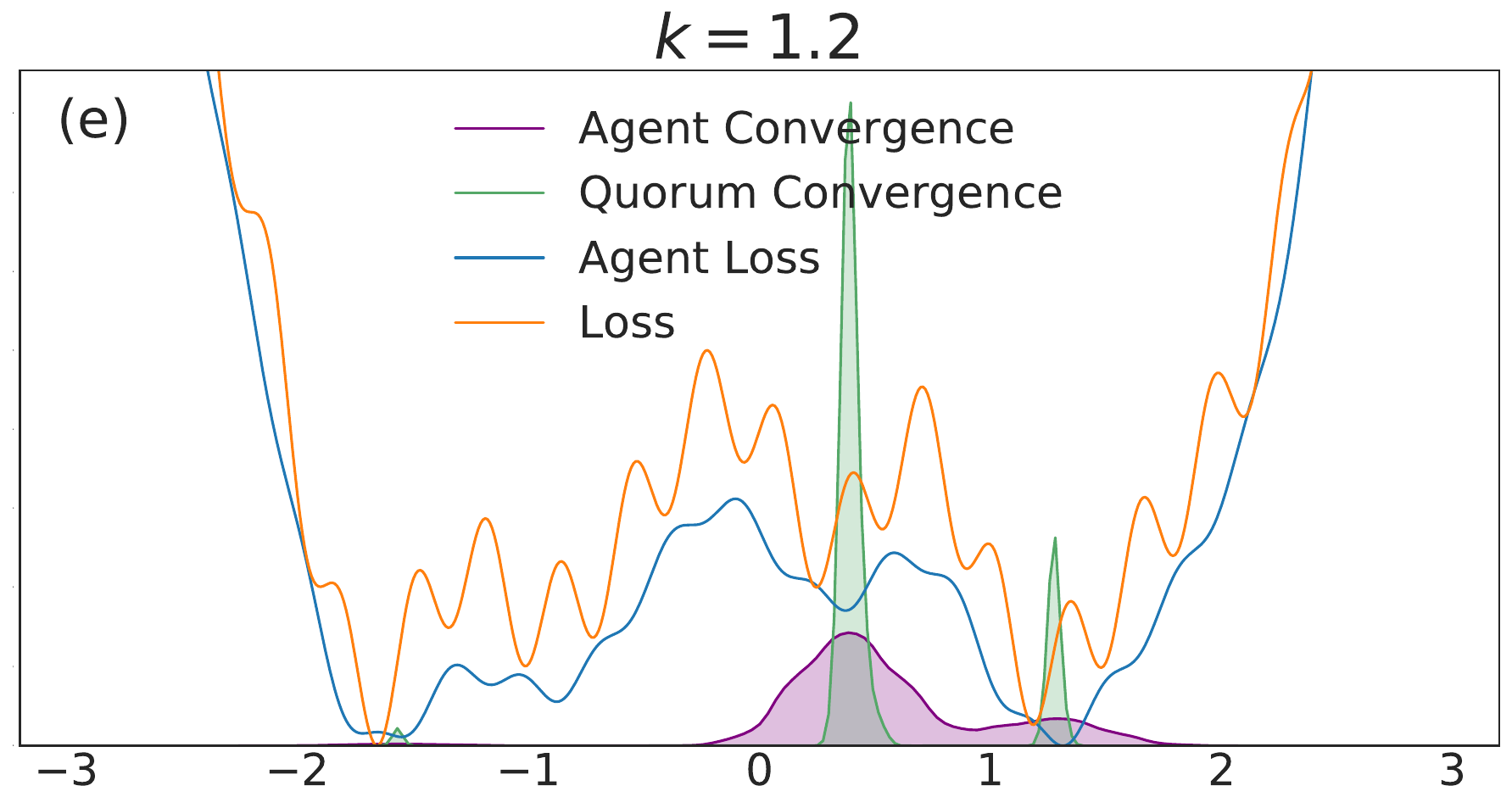}
        \end{subfigure}&
    
        \begin{subfigure}{.5\textwidth}
            \centering
            \includegraphics[width=\textwidth]{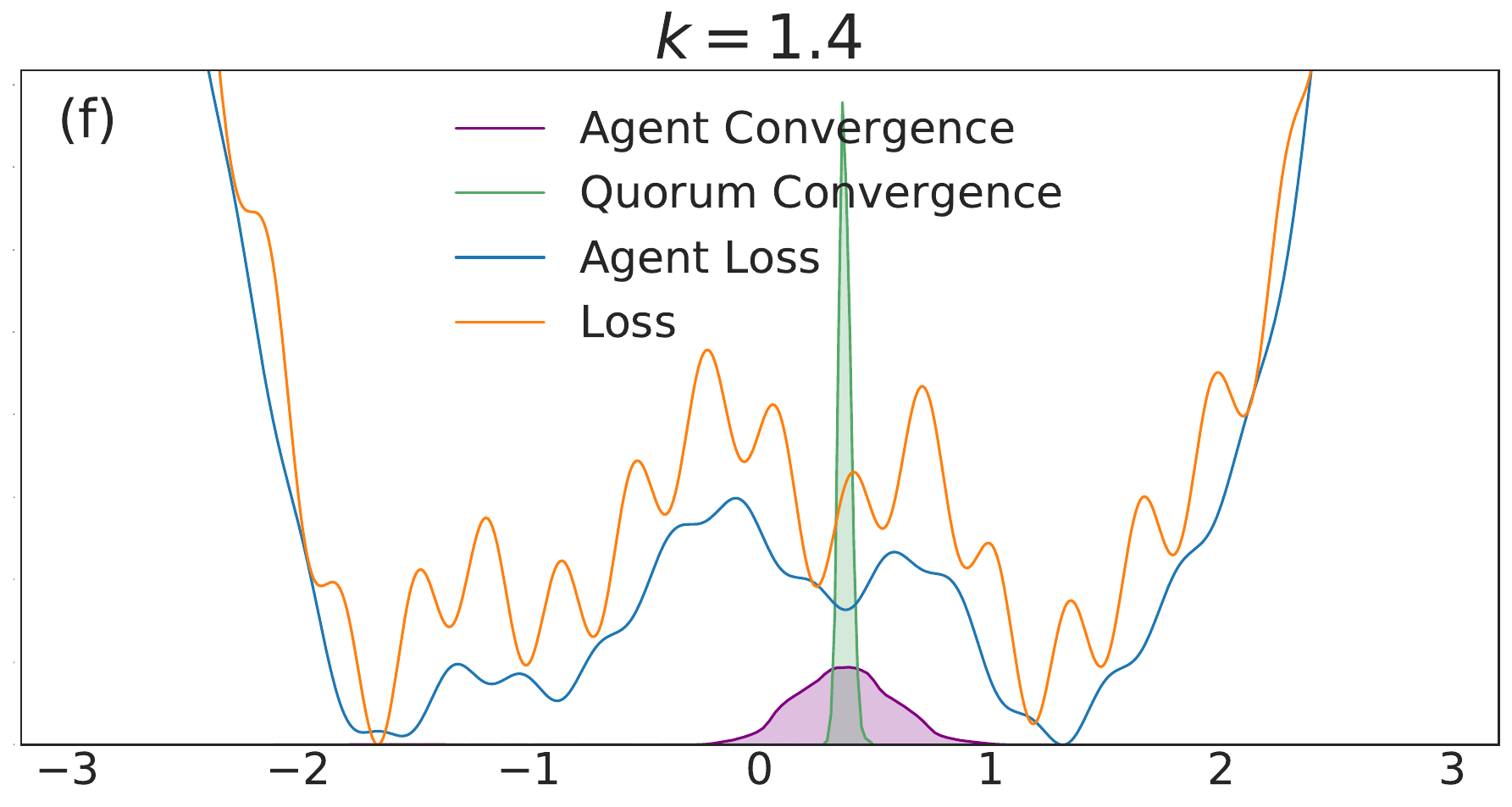}
        \end{subfigure}
    \end{tabular}
    \caption{\normalsize A demonstration of the effect of coupling in the high-noise regime. As the gain is increased, the agents transition from uniform convergence across parameter space, to sharply peaked convergence around deep minima of the smoothed loss, to convergence around minima of the smoothed loss near the initialization. The true loss is shown in orange, the smoothed loss is shown in blue, and the distributions of final iterates for the agents and the quorum variable are shown in purple and green respectively. These simulations use a value of $\eta = 0.15$. Each plot contains the final iterates over 250 simulations with 20,000 iterations each and 1000 agents per simulation. Best viewed in color.}
    \label{fig:kleinberg}
    
\end{figure}

In Fig.~\ref{fig:kleinberg}(a), there is no coupling and the distribution of final iterates for the agents is nearly uniform across the parameter space with a slightly increased probability of convergence to the two deepest regions. The distribution of the quorum variable is sharply peaked around zero\footnote{\normalsize Note that without coupling each agent performs basic SGD. Hence, the results in Fig.~\ref{fig:kleinberg}(a) are equivalent to $p\times n$ single-agent SGD simulations, where $n$ is the total number of simulations and $p$ is the number of agents per simulation}. As $k$ increases to $k=0.4$ in Fig.~\ref{fig:kleinberg}(b), the agents concentrate around the wide basins of the convolved loss function and avoid the sharp local minima of the true loss function. The distribution for the quorum variable is similar, but is too wide to imply reliable convergence to a minimum with loss near the global optimum.

As $k$ is increased further to $k=0.8$ in Fig.~\ref{fig:kleinberg}(c) and $k=1.0$ in Fig.~\ref{fig:kleinberg}(d), performance increases significantly. The distribution of the agents is centered around the global optimum of the smoothed loss, and the distribution of the quorum variable is very sharp around the same minimum; this represents the regime in which the agents have chosen a consensus solution. As demonstrated by Fig.~\ref{fig:kleinberg}(a), this improved convergence is not possible with standard SGD. As $k$ is increased again in Figs.~\ref{fig:kleinberg}(e) and (f), the coupling force becomes too great, and performance decreases -- there is no initial exploratory phase to find the deeper regions of the landscape, and convergence is simply near the initialization of $\com$.

\begin{figure}

    \begin{tabular}{cc}
        \begin{subfigure}{.5\textwidth}
            \centering
            \includegraphics[width=\textwidth]{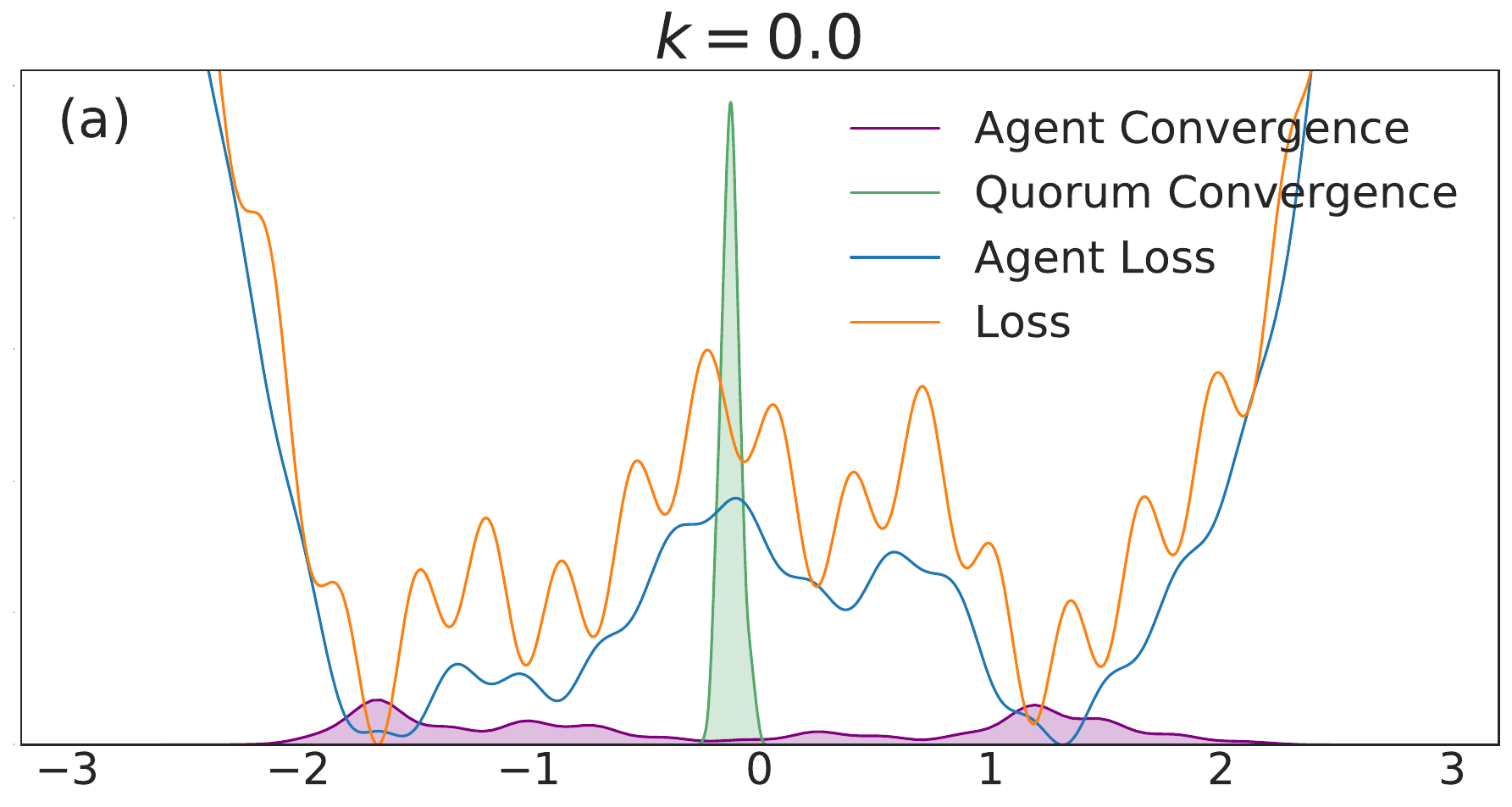}
        \end{subfigure}&
        
        \begin{subfigure}{.5\textwidth}
            \centering
            \includegraphics[width=\textwidth]{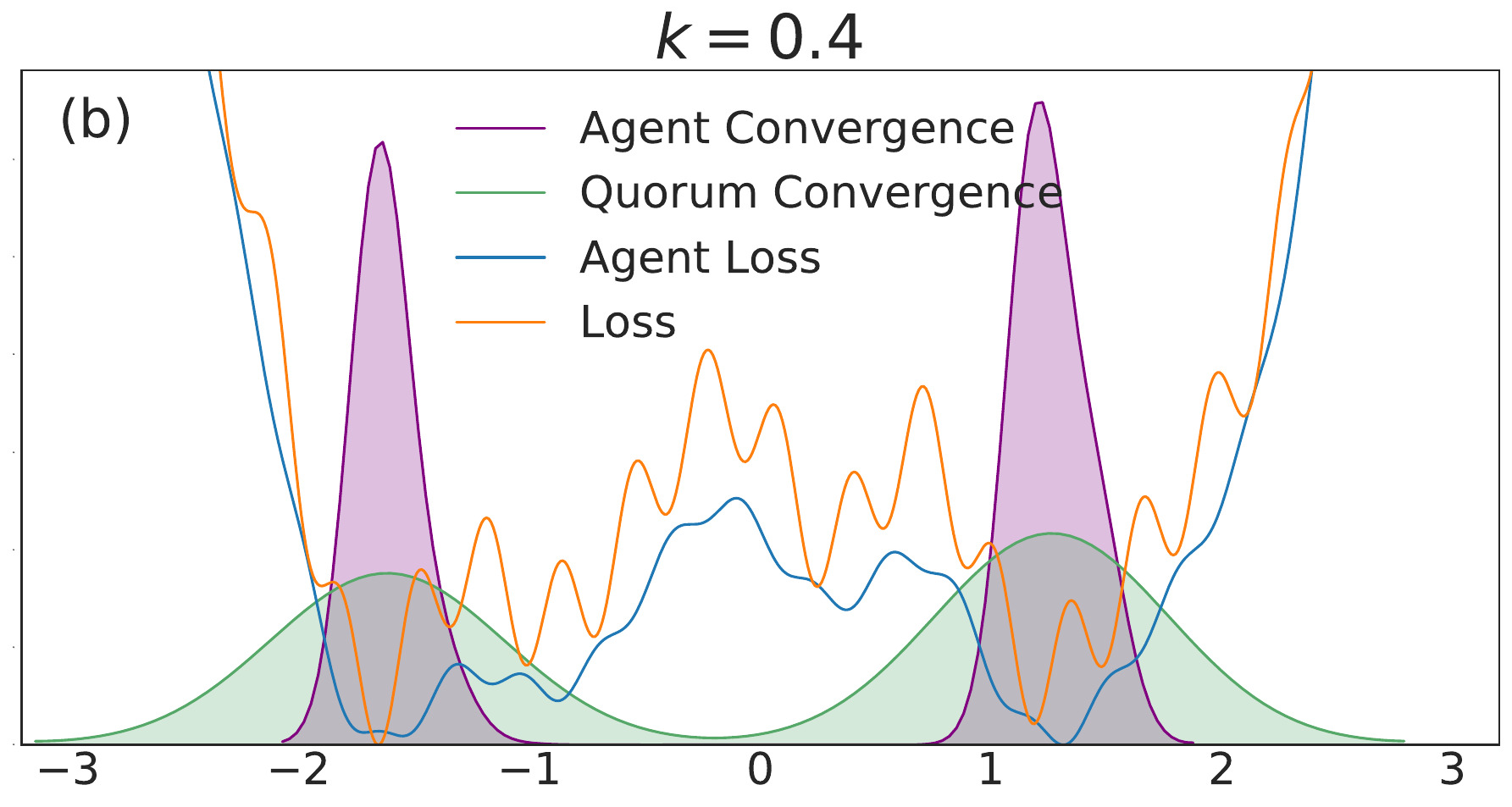}
        \end{subfigure}\\
        
        \begin{subfigure}{.5\textwidth}
            \centering
            \includegraphics[width=\textwidth]{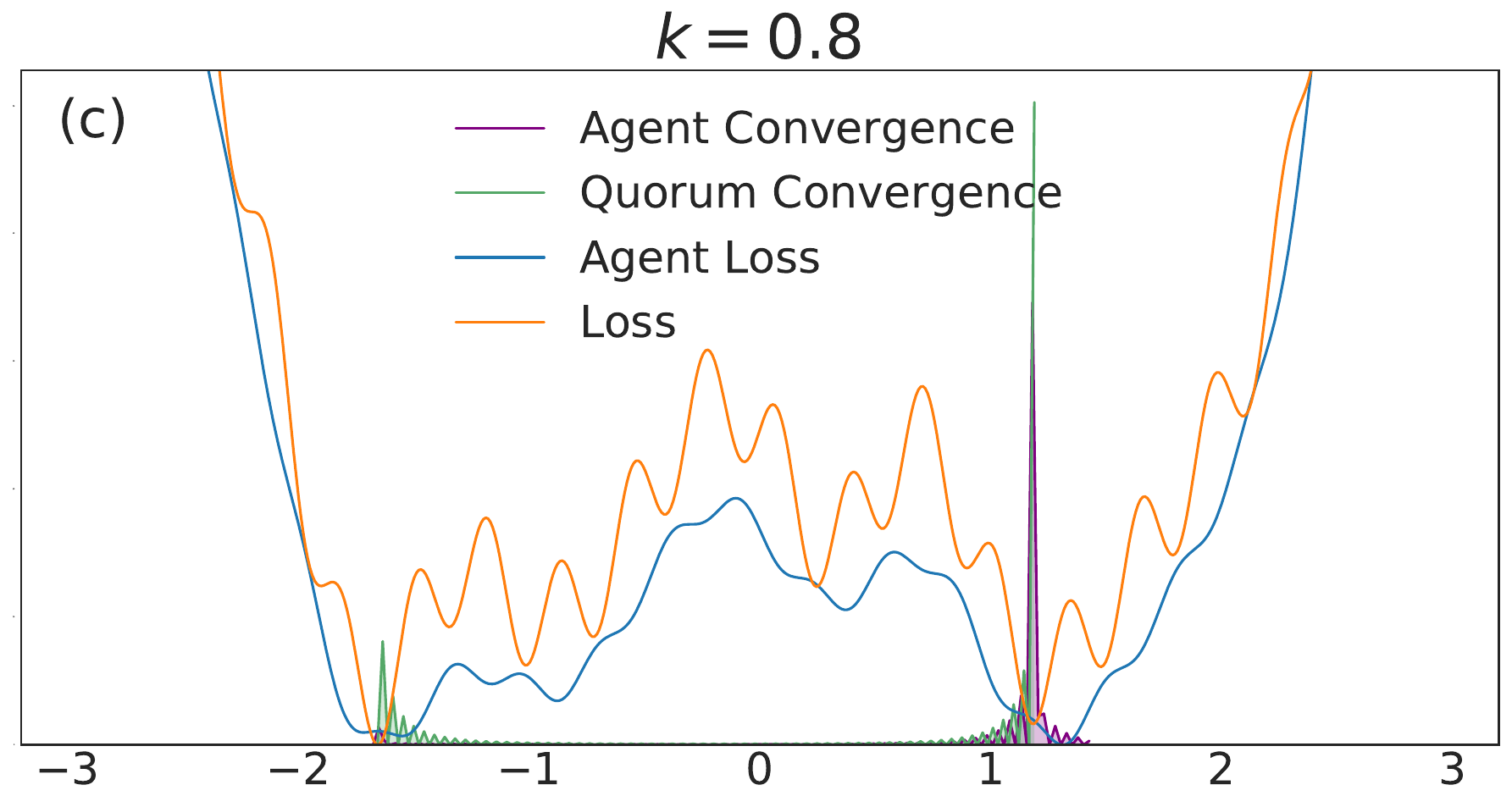}
        \end{subfigure}&
        
        \begin{subfigure}{.5\textwidth}
            \centering
            \includegraphics[width=\textwidth]{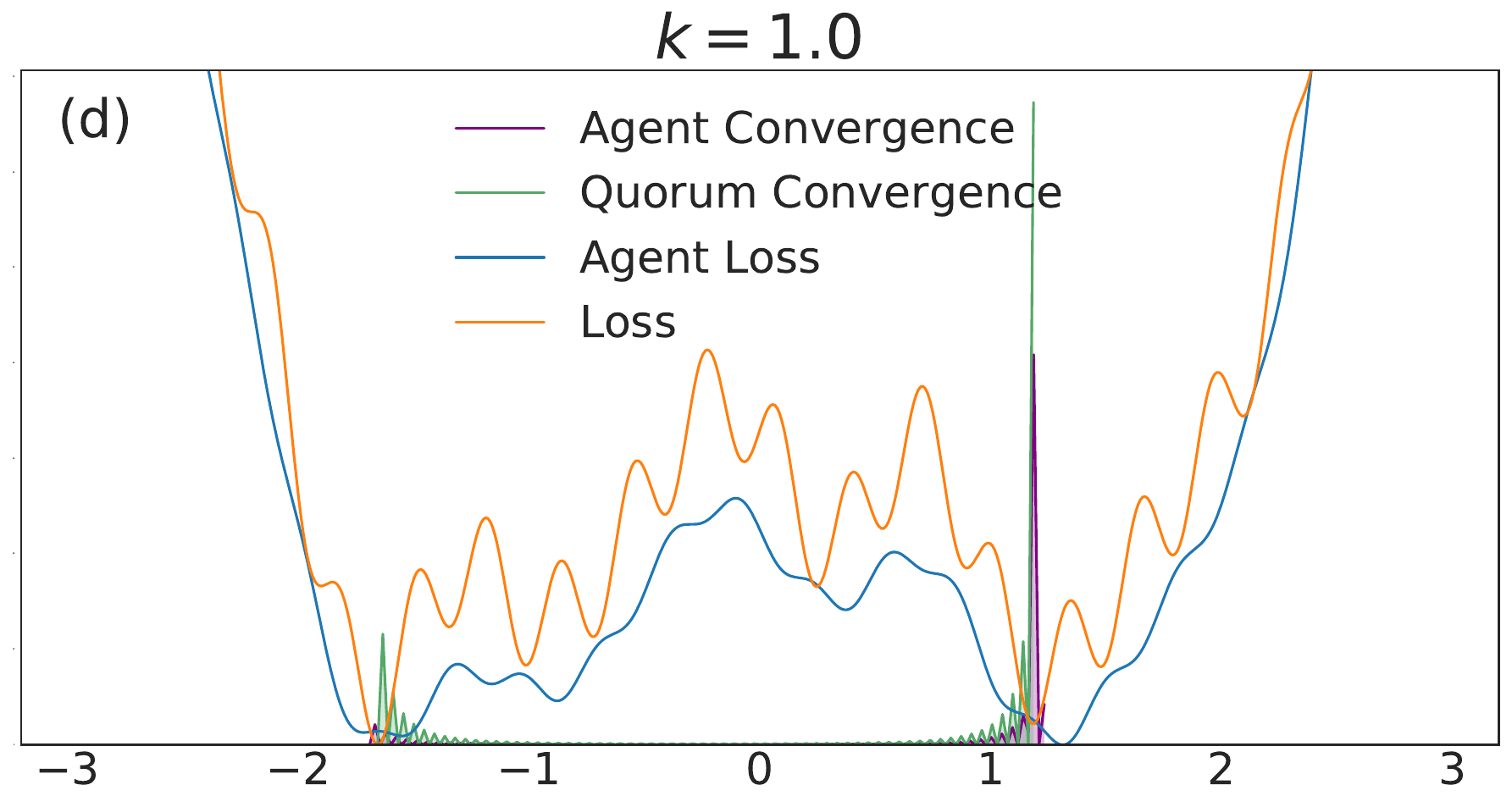}
        \end{subfigure}\\
        
        \begin{subfigure}{.5\textwidth}
            \centering
            \includegraphics[width=\textwidth]{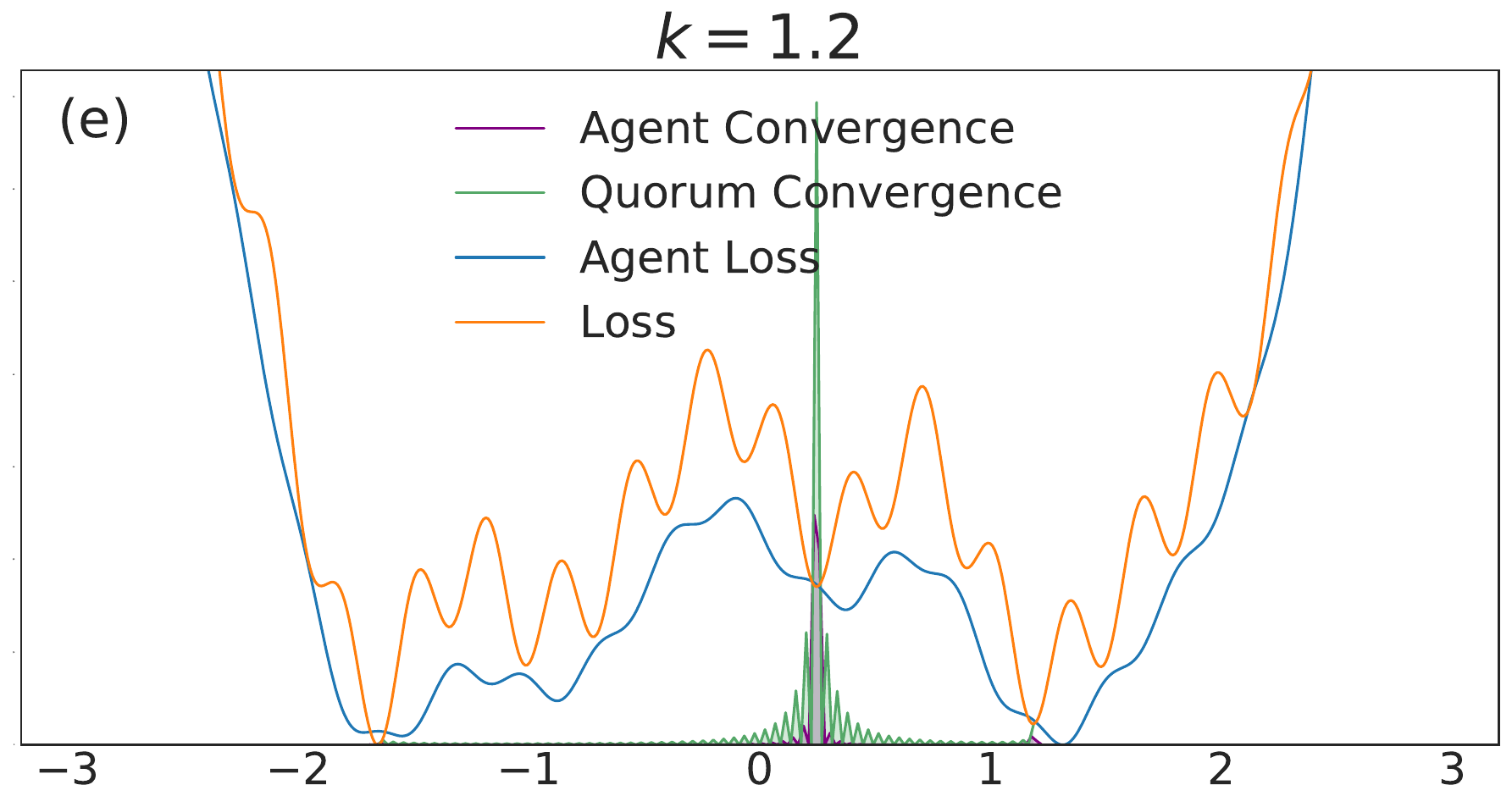}
        \end{subfigure}&
    
        \begin{subfigure}{.5\textwidth}
            \centering
            \includegraphics[width=\textwidth]{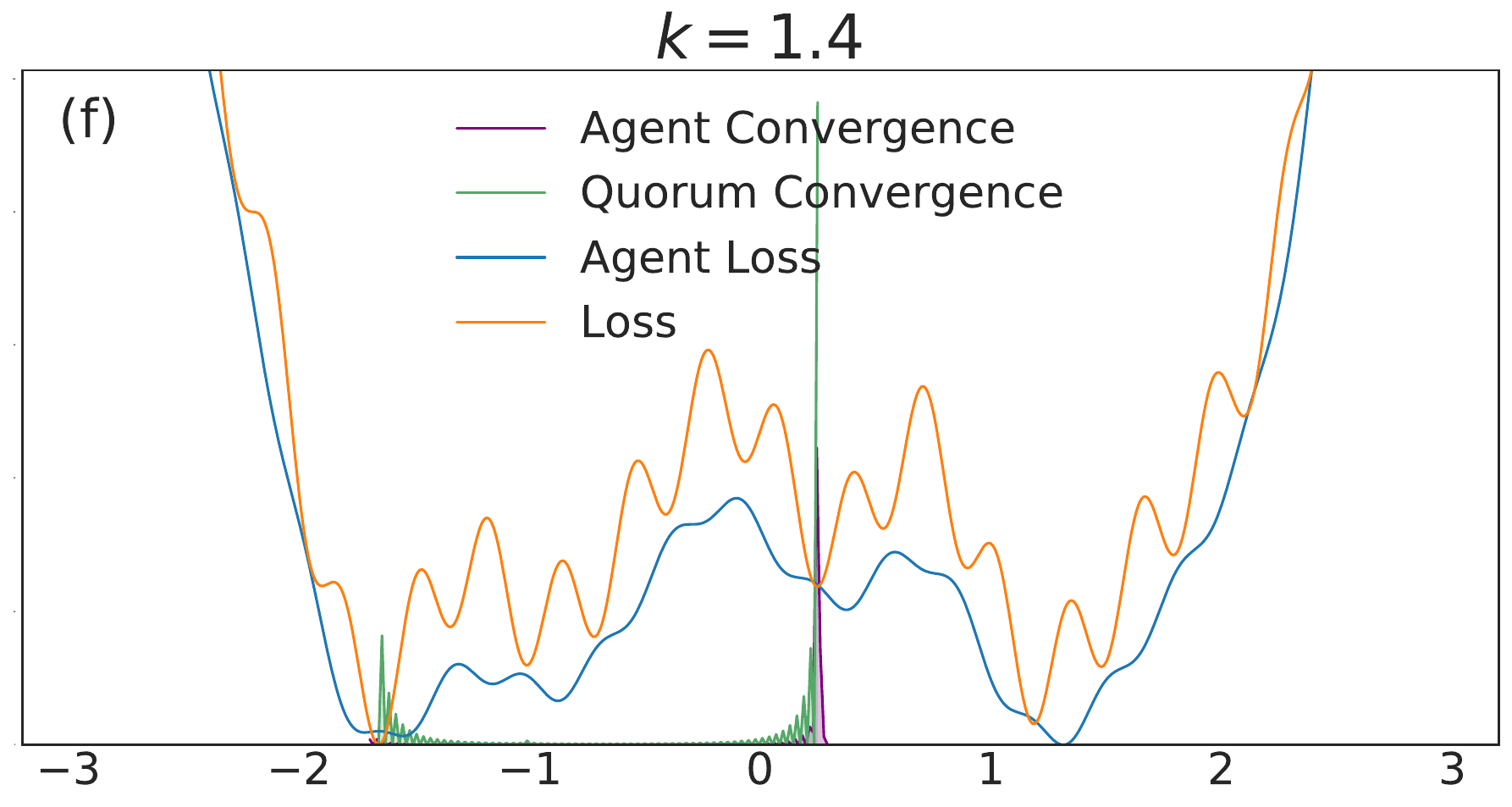}
        \end{subfigure}
    \end{tabular}
    \caption{\normalsize A demonstration of the effect of combining a learning rate schedule with coupling in the high-noise regime. The combination of coupling and learning rate scheduling significantly improves convergence for values of $k$ that concentrate around the global optimum of the smoothed loss in the non-annealed case ($k=0.8$ and $k=1.0$), and the combination leads to sharp peaks around minima of the true loss function. The true loss is shown in orange, the smoothed loss is shown in blue, and the distributions of final iterates for the agents and the quorum variable are shown in purple and green respectively. These simulations use an initial learning rate of $\eta = 0.15$. Each plot contains the final iterates over 250 simulations with 20,000 iterations each and 1000 agents per simulation. Best viewed in color.}
    \label{fig:kleinberg_lrsched}
    
\end{figure}

These simulation results suggest a useful combination of high noise, coupling, and traditional learning rate schedules. High noise levels can lead to rapid exploration and avoidance of problematic regions in parameter space -- such as local minima, saddle points, or flat regions -- while coupling can stabilize the dynamics towards a distribution around a wide and deep minimum of the convolved loss. The learning rate can then be decreased to improve convergence to minima of the true loss that lie within the spread of the distribution. In the uncoupled case, similar levels of noise would lead to a random walk.

This intuition is supported by the simulation results in Fig.~\ref{fig:kleinberg_lrsched}. The same simulation parameters are used, except the learning rate is now decreased by a factor of two every $4000$ iterations until $\eta \leq 0.001$ where it is fixed. In the uncoupled case in Fig.~\ref{fig:kleinberg_lrsched}(a), the schedule only slightly improves convergence around minima of the smoothed loss when compared to Fig.~\ref{fig:kleinberg}(a). Fig.~\ref{fig:kleinberg_lrsched}(b) again reflects a mild improvement relative to Fig.~\ref{fig:kleinberg}(b). For the two best values of $k=0.8$ and $k=1.0$ in Figs.~\ref{fig:kleinberg_lrsched}(c) and (d), convergence of the agents and the quorum variable around the deepest minimum of the true loss that lies within the distribution of the agents in Figs.~\ref{fig:kleinberg}(c) and (d) is excellent. In the very high $k$ regime in Figs.~\ref{fig:kleinberg_lrsched}(e) and (f), the coupling force is too strong to enable exploration, and convergence is again near the initialization of $\comb$, but now to the minima of the true loss.

The preceding results also qualitatively apply to momentum methods. We now turn to simulate the following iteration
\begin{align}
    \label{eqn:mom_disc_1}
    v^i_{t+1} &= \delta v^i_t - \eta \nabla f(x^i_t + \delta v^i_t) - \eta\zeta_t^i,\\
    x^i_{t+1} &= x^i_t + v^i_{t+1} + \eta k \left(\com_t - x^i_t\right),
    \label{eqn:mom_disc_2}
\end{align}
with the loss function again given by (\ref{eqn:double_well}). The distributions of final iterates after $20,000$ steps with $\eta = 0.1$, computed from $250$ simulations per $k$ value with $1000$ agents per simulation, are shown in Fig.~\ref{fig:klein_mom}.

Fig.~\ref{fig:klein_mom}(a) is identical to Fig.~\ref{fig:kleinberg}(a) except for the difference in learning rate: the agents converge uniformly across the parameter space. As $k$ is increased to $k=2$ in Fig.~\ref{fig:klein_mom}(b), the distribution of the agents becomes more localized around the center of parameter space, but not around any minima. When $k$ is increased to $k=4$ in Fig.~\ref{fig:klein_mom}(c), $k=8$ in Fig.~\ref{fig:klein_mom}(d), and $k=10$ in Fig.~\ref{fig:klein_mom}(e), the distributions of the agents and the quorum variable become localized on the two deepest minima of the convolved loss, but are still too wide for reliable convergence. The value $k=15$ in Fig.~\ref{fig:klein_mom}(f) leads to reliable convergence around the deep minimum on the right, and would combine well with a learning rate schedule as in Fig.~\ref{fig:kleinberg_lrsched}. Overall, the trend is similar to the case without momentum, though much higher values of $k$ are tolerated before degradation in performance. Despite high $k$ values rapidly pulling the agent positions close to $\comb(t=0)$, significant differences in the velocities of the agents prevents convergence to a local minimum nearby $\comb(t=0)$ in the high $k$ regime.
\begin{figure}
    \begin{tabular}{cc}
        \begin{subfigure}{.5\textwidth}
            \centering
            \includegraphics[width=\textwidth]{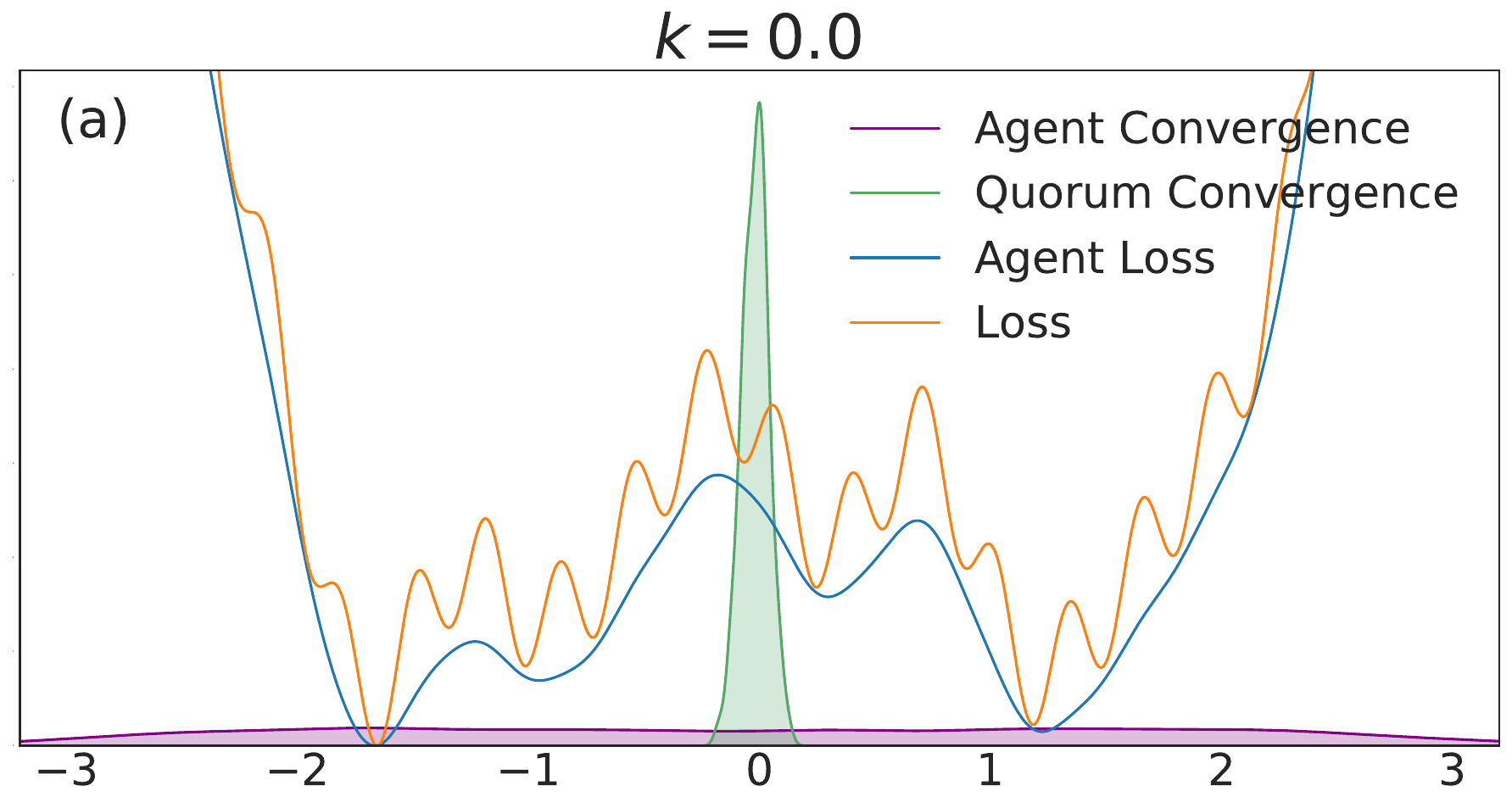}
        \end{subfigure}&
        
        \begin{subfigure}{.5\textwidth}
            \centering
            \includegraphics[width=\textwidth]{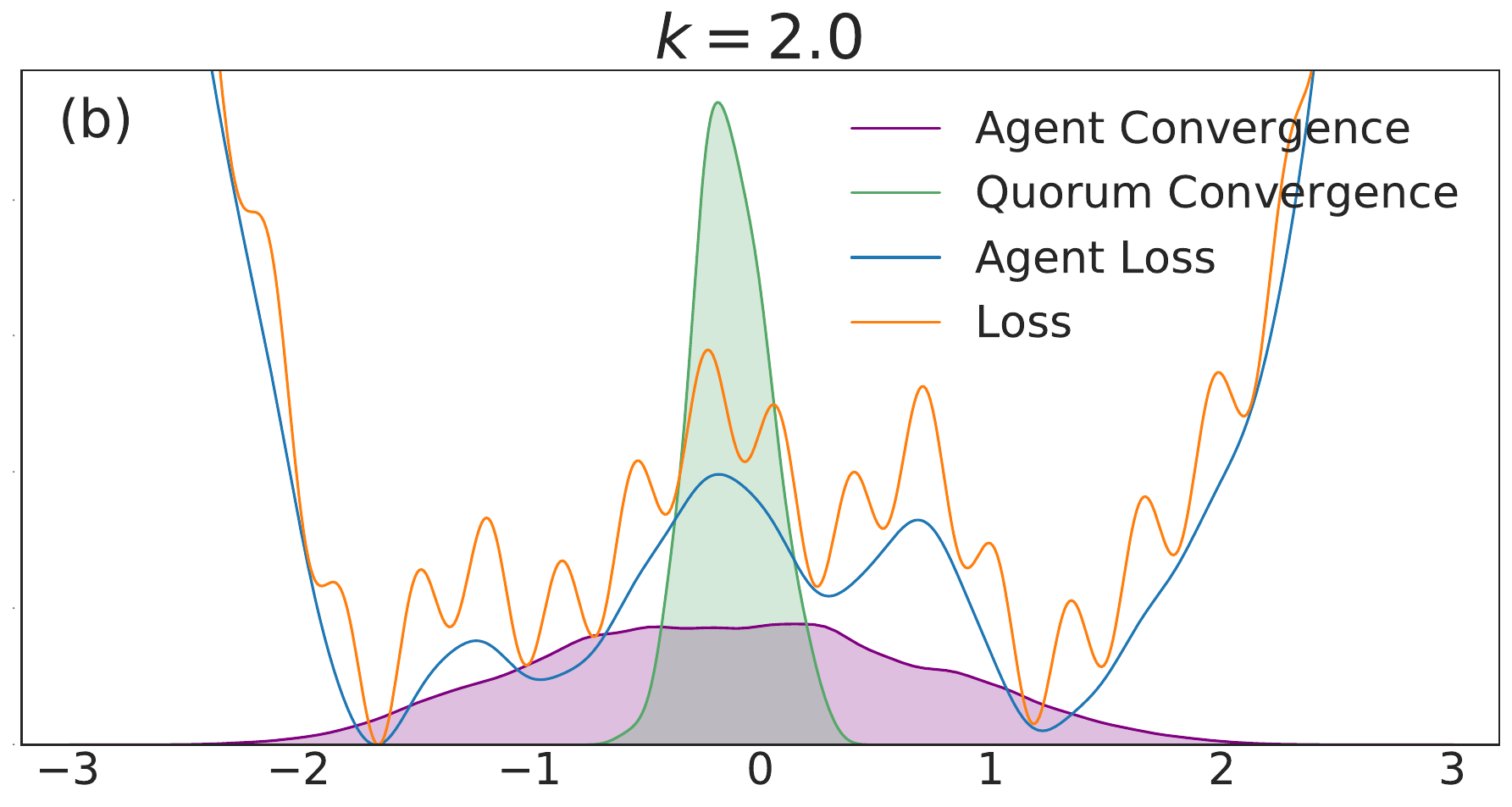}
        \end{subfigure}\\
        
        \begin{subfigure}{.5\textwidth}
            \centering
            \includegraphics[width=\textwidth]{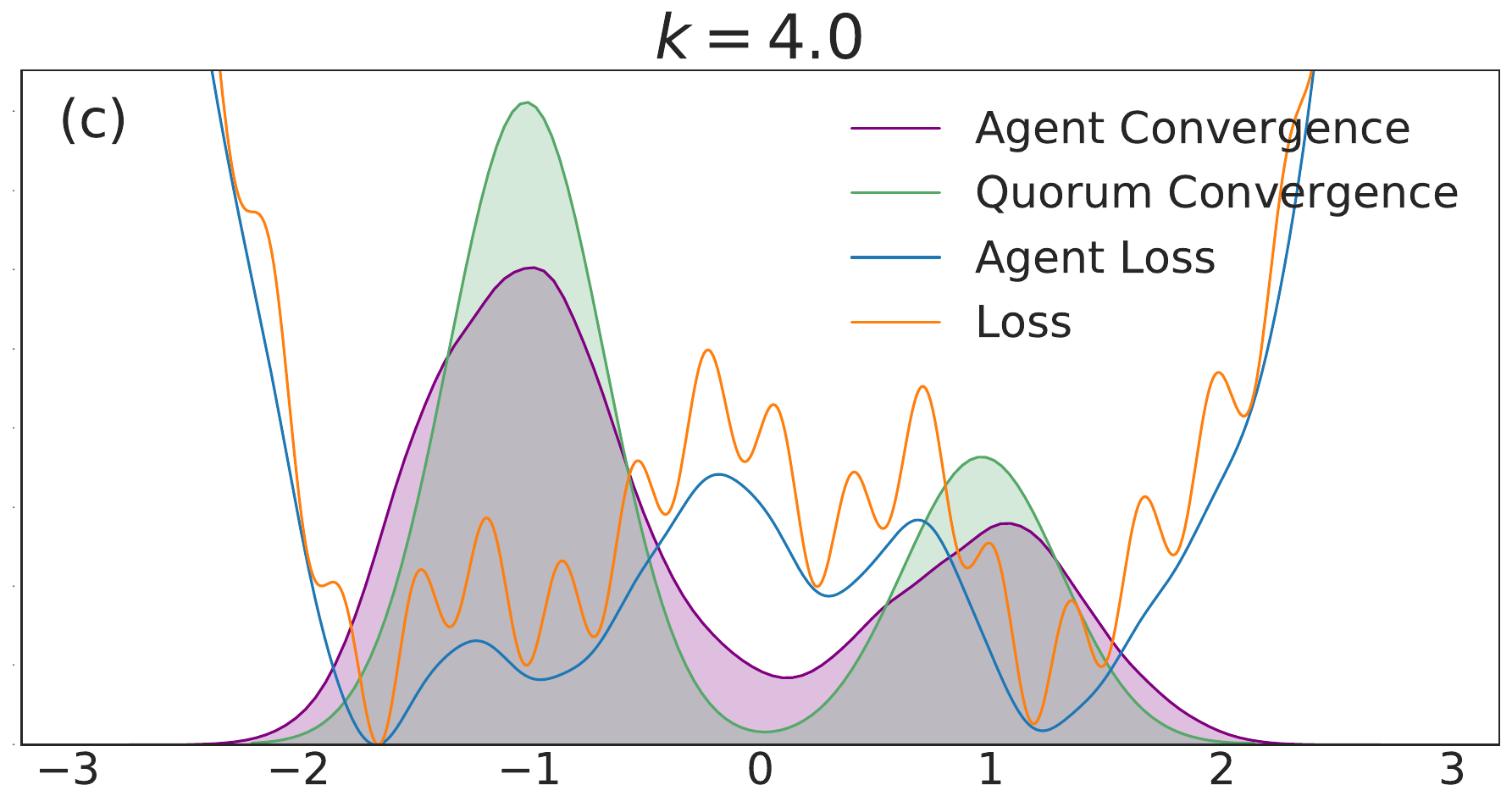}
        \end{subfigure}&
        
        \begin{subfigure}{.5\textwidth}
            \centering
            \includegraphics[width=\textwidth]{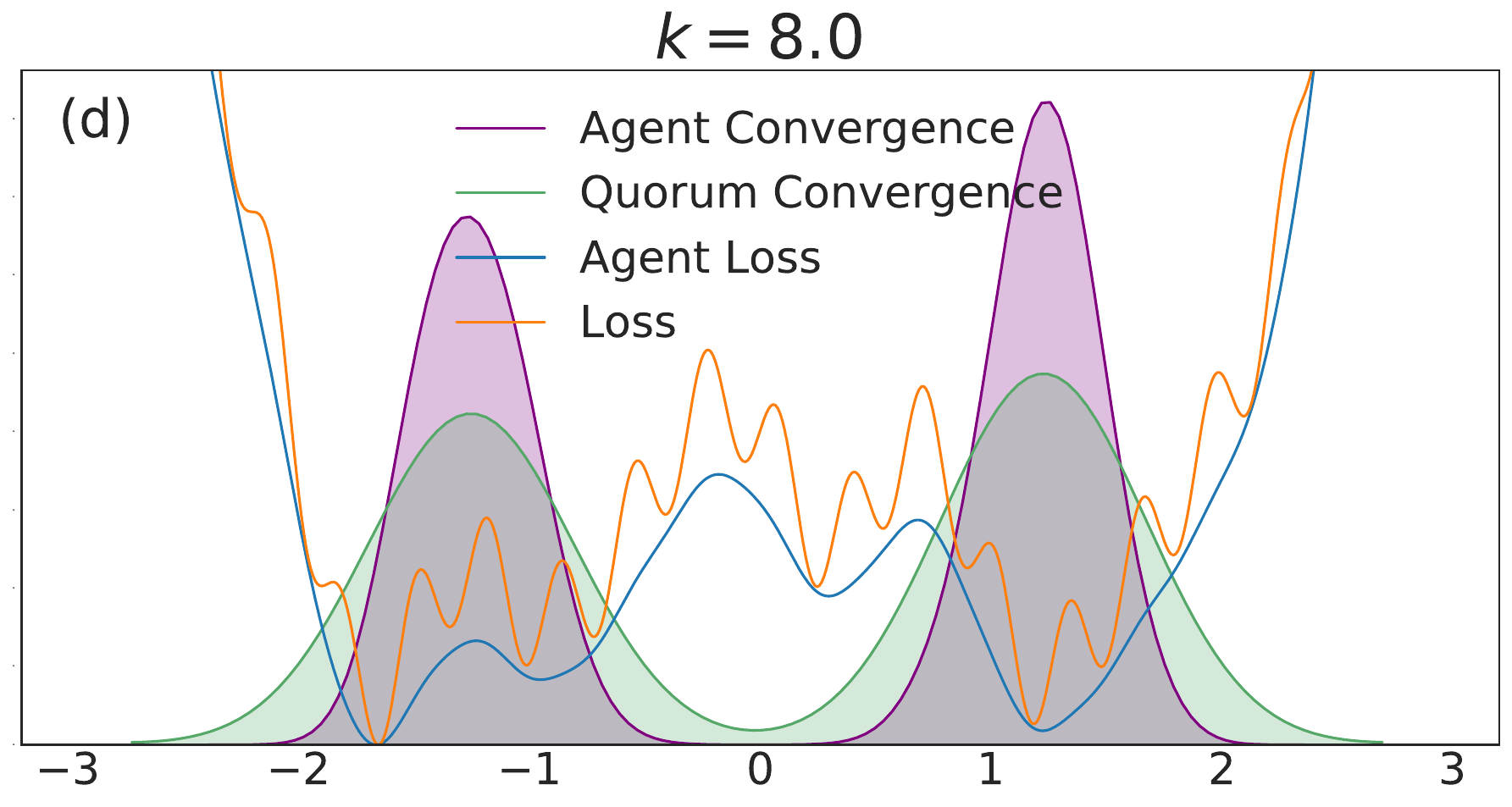}
        \end{subfigure}\\
        
        \begin{subfigure}{.5\textwidth}
            \centering
            \includegraphics[width=\textwidth]{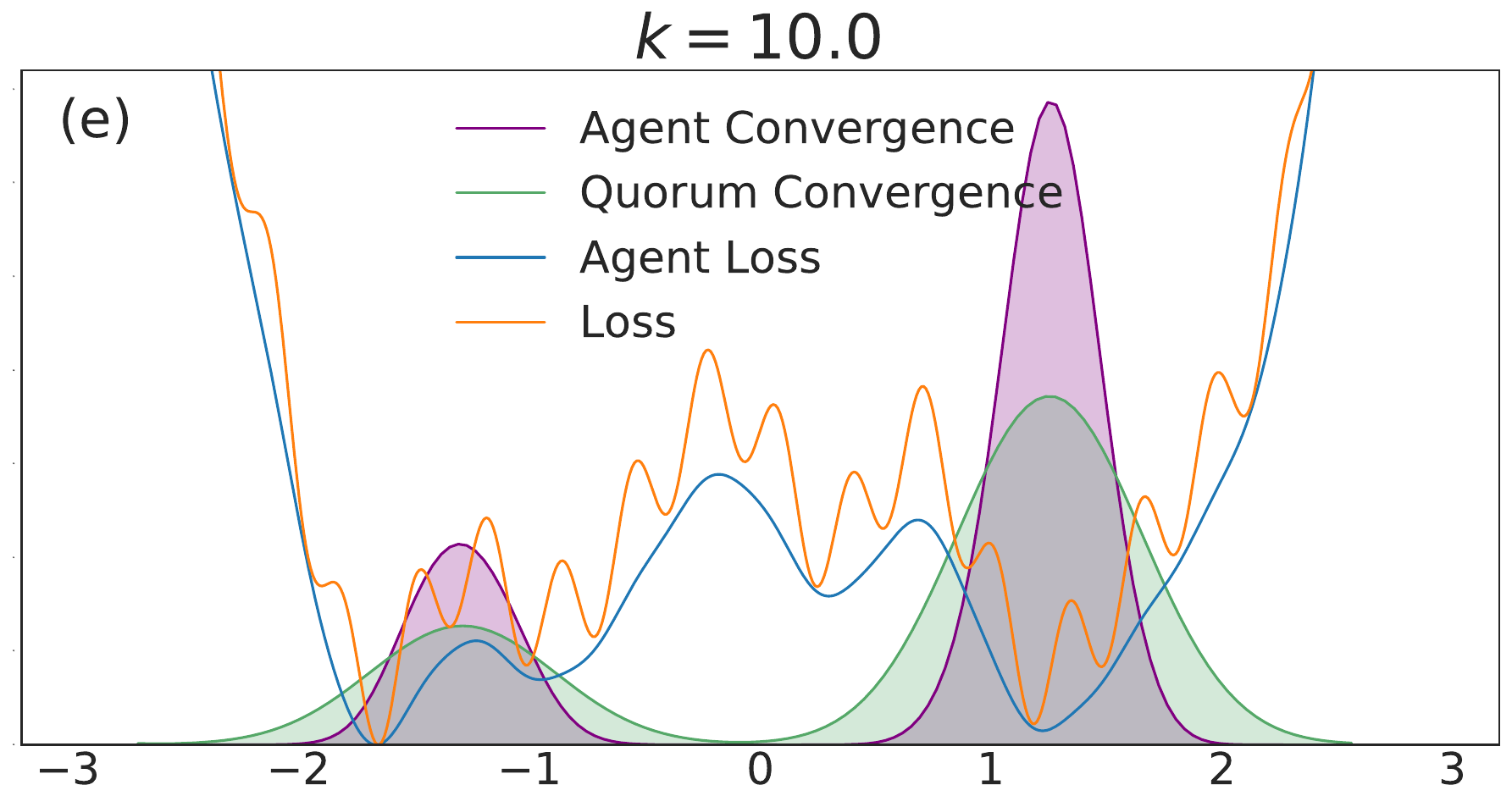}
        \end{subfigure}&
    
        \begin{subfigure}{.5\textwidth}
            \centering
            \includegraphics[width=\textwidth]{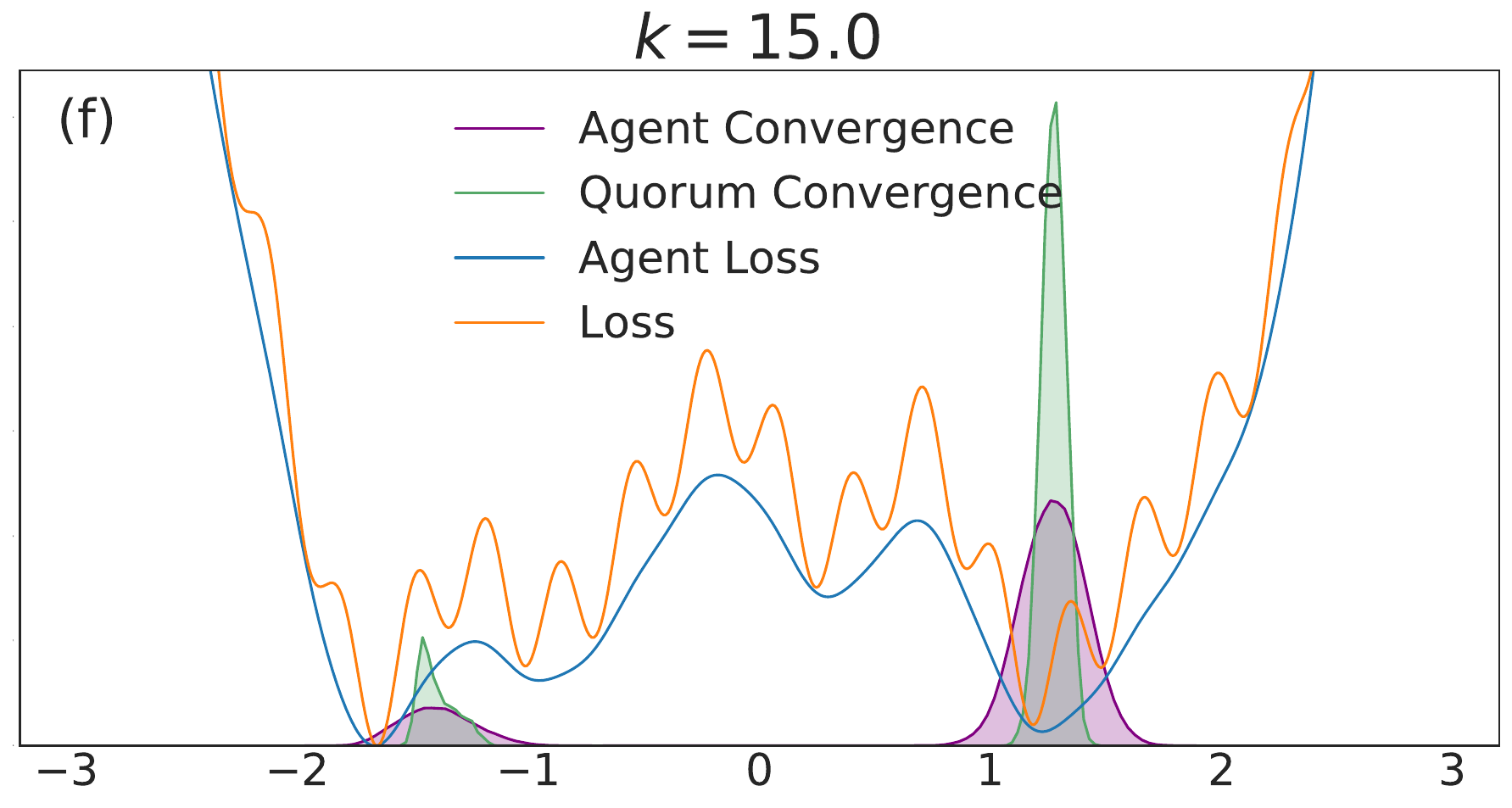}
        \end{subfigure}
    \end{tabular}
    \caption{\normalsize Simulations for the momentum method iteration given by (\ref{eqn:mom_disc_1}) and (\ref{eqn:mom_disc_2}) with $\eta = 0.1$ and $\delta = 0.9$. The true loss is shown in orange, the smoothed loss is shown in blue, and the distributions of final iterates for the agents and the quorum variable are shown in purple and green respectively. The results are qualitatively similar to QSGD without momentum, except that higher $k$ values are tolerated without degradation of performance. Each plot contains the final iterates over 250 simulations with 20,000 iterations each and 1000 agents per simulation. Best viewed in color.}
    \label{fig:klein_mom}
\end{figure}

To demonstrate that these qualitative results also hold in higher dimensions, we now consider a $d$-dimensional objective function inspired by the one-dimensional objective function (\ref{eqn:double_well}). The loss function is given by
\begin{align}
    f(\bx) &= \left(\sum_{i=1}^d x_i^4 - 4 x_i^2 + \frac{1}{5} x_i \right.\nonumber\\
    &+ \frac{2}{5}\left(\left.\sum_{i, j = 1}^d 3\sin(20x_i)\sin(20x_j) + \cos(\frac{10 e}{3}x_i)\cos(\frac{10e}{3}x_j) - \frac{7}{2}\sin(2\pi x_i)\sin(2\pi x_j)\right)\right)/F.
    \label{eqn:nd_loss}
\end{align}
Equation (\ref{eqn:nd_loss}) represents a separable sum of double well loss functions with pairwise sinusoidal coupling between all parameters. We include $1000$ agents in each of $250$ simulations per $k$ value with $d=250$. Each simulation is allowed to run for $10,000$ steps with $1000$ agents per simulation. The parameters are updated according to the vector forms of (\ref{eqn:mom_disc_1}) and (\ref{eqn:mom_disc_2}) with $\eta = .15$ and $\delta = .9$. No learning schedule is used. The agents are all randomly initialized uniformly in $[-4, 4]\times [-4, 4]$ and each experiences an i.i.d. noise term $\zeta^i_t \sim U(-.75, .75)$. $F$ is fixed at $50$.

For visualization purposes, we plot the contours of a two-dimensional cross section of the loss function by evaluating the last $d-2$ coordinates at the value $-1.2$. This value was chosen to represent the bottom-left cluster apparent in Figs.~\ref{fig:klein_agent_nd} and \ref{fig:klein_quorum_nd}; it also lies close to the global minimum of the uncorrupted loss function $(-1.426, -1.426, \hdots, -1.426)^T \in \mathbb{R}^d$. Visualization of high-dimensional loss functions is difficult, and using such a crossz section has its drawbacks; in particular, a saddle point may show up as a local minimum, correctly as a saddle point, or as a local maximum depending on the cross section taken. Nevertheless, the employed cross sections enable     qualitative visualization of the clustering of the quorum variable and the individual agents, and provide assurance that the general phenomena seen in one dimension in Figs.~\ref{fig:kleinberg}-\ref{fig:klein_mom} generalize naturally to higher dimensions.

\begin{figure}
    \centering
    \includegraphics[width=\textwidth]{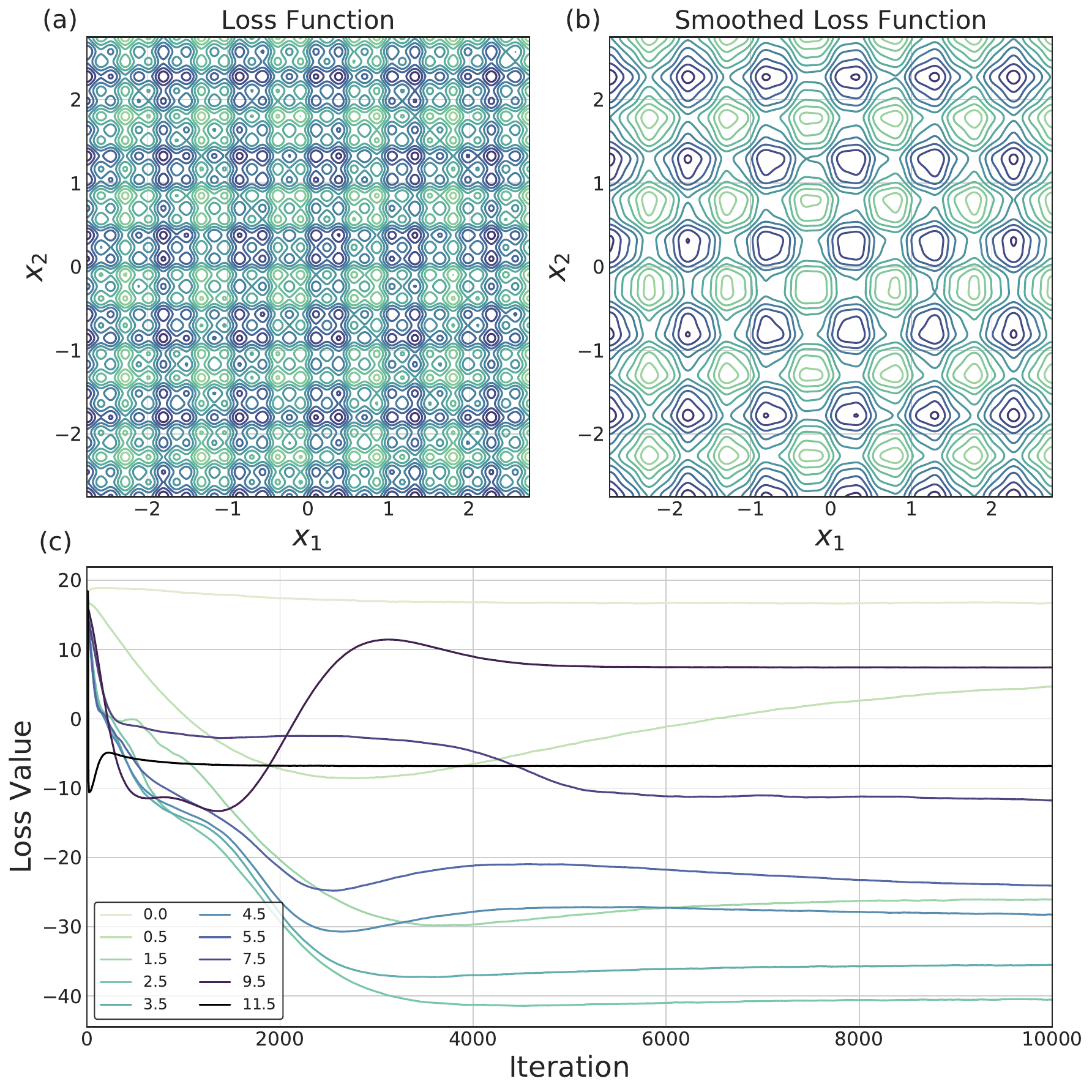}
    \caption{\normalsize (a) A cross section of the loss function (\ref{eqn:nd_loss}), evaluated at $x_i = -1.2$ for $i = 3, \hdots, d$. (b) The same cross section as (a), now of the smoothed loss function given by (\ref{eqn:nd_loss}) convolved with the $\eta$-scaled uniform noise distribution. (c) The loss function value over time for the quorum variable, averaged over all simulations (see text), for a range of $k$ values. The curves demonstrate that there is an optimum value of coupling, in this case around $k = 2.5$, for minimizing the loss function. Best viewed in color.}
    \label{fig:klein_losses_nd}
\end{figure}

The loss function itself is shown in Fig.~\ref{fig:klein_losses_nd}(a) and the smoothed loss is shown in Fig.~\ref{fig:klein_losses_nd}(b), which has significantly reduced complexity. Fig.~\ref{fig:klein_losses_nd}(c) displays the loss value of the quorum variable, averaged over all simulations, as a function of iteration number for a set of possible $k$ values. The results are much the same as was described qualitatively in one dimension. Low values of $k$ such as $k=0$ and $k=0.5$ do not successfully minimize the loss function as the agents are too spread out. Despite a significant ability to explore the loss landscape with such small coupling, the agents are not concentrated enough for $\comb$ to represent a meaningful average. As $k$ increases, the ability to optimize the loss function at first significantly improves. While better than $k=0$ and $k=0.5$, $k=1.5$ still represents the regime of too little coupling. $k=2.5$ and $k=3.5$ obtain much lower loss values than $k=0$ and $k=0.5$, with $k=2.5$ achieving the lowest loss of the displayed $k$ values. As $k$ is increased further, performance starts to degrade. $k=4.5$ performs worse than $k=2.5$, and $k=3.5$ obtains similar performance to $k=1.5$. Increasing $k$ to $k=7.5$, $k=9.5$, and $k=11.5$ continues to deteriorate the ability of the algorithm to minimize the loss. The optimum $k$ value represents, for the given noise level and loss function, the correct balance of exploration and resistance to noise.

\begin{figure}
    \centering
    \includegraphics[width=\textwidth]{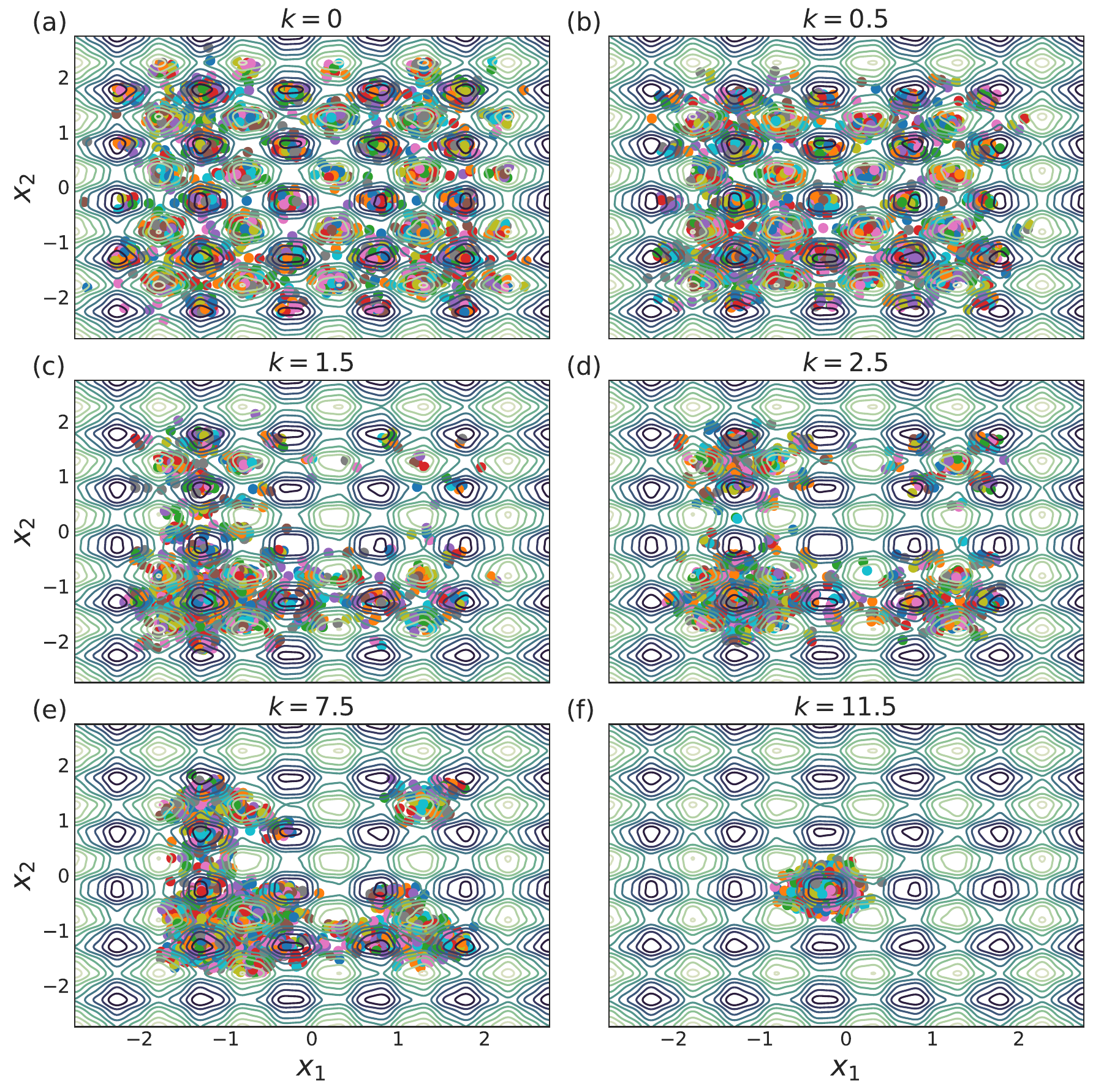}
    \caption{\normalsize Contour plots displaying the location of $25$ agents per simulation (multicolored dots) at the final iteration on top of the smoothed loss. See text for overall simulation setup. Best viewed in color.}
    \label{fig:klein_agent_nd}
\end{figure}

\begin{figure}
    \centering
    \includegraphics[width=\textwidth]{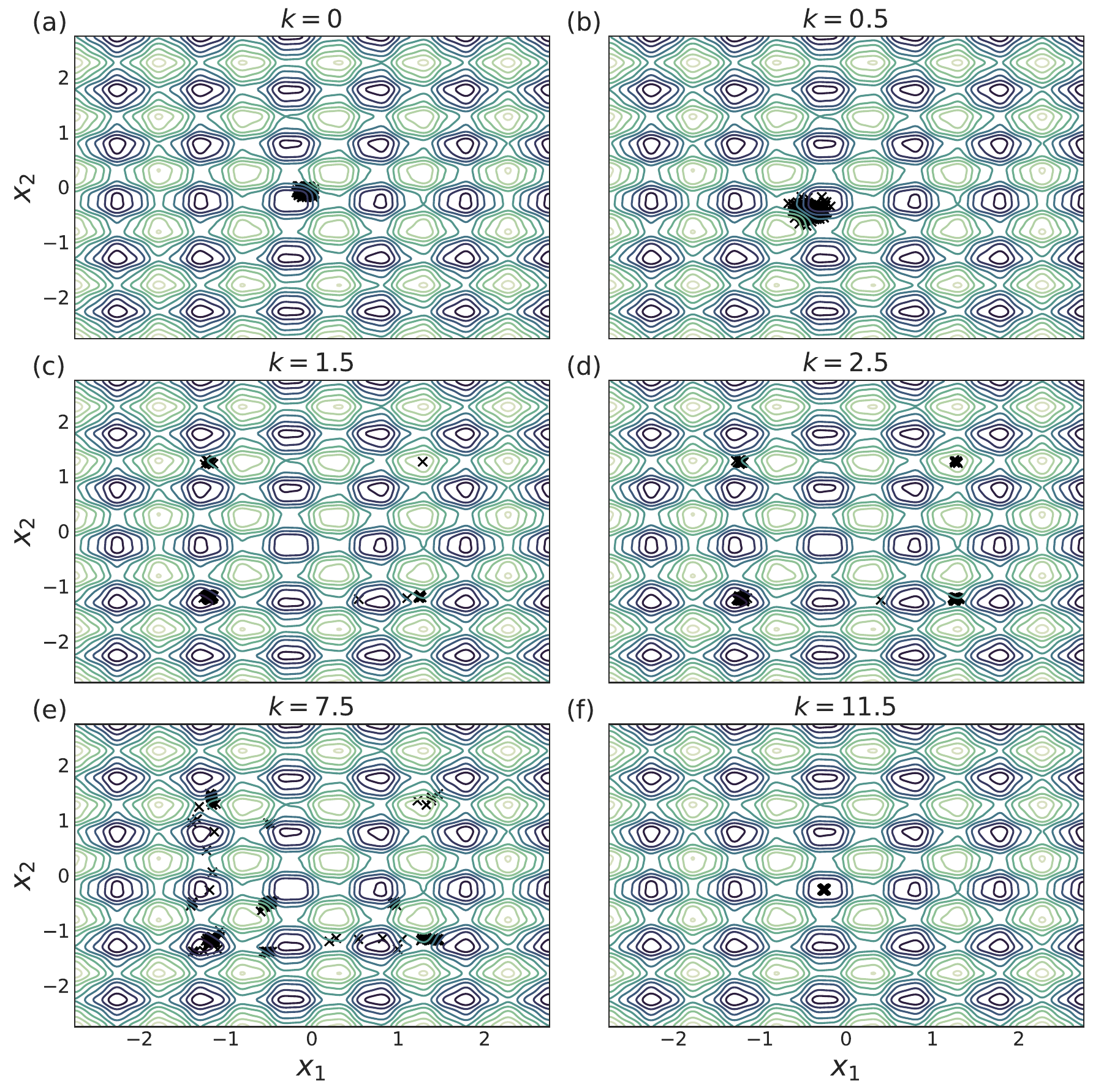}
    \caption{\normalsize Contour plots displaying the location of the quorum variable (black x) in each simulation at the final iteration on top of the smoothed loss. See text for overall simulation setup. Best viewed in color.}
    \label{fig:klein_quorum_nd}
\end{figure}

As in the case of any algorithmic hyperparameter, it is natural to expect that there will be an optimum value of $k$. To see that the manifestation of this optimum is precisely a high-dimensional analogue of the qualitative behavior observed in the one-dimensional simulations in Figs.~\ref{fig:kleinberg}-\ref{fig:klein_mom}, we visualize the final points found by the quorum variable and a random selection of $25$ agents per simulation in Figs.~\ref{fig:klein_agent_nd} and \ref{fig:klein_quorum_nd} respectively for a representative subset of the $k$ values seen in Fig.~\ref{fig:klein_losses_nd}(c).

Fig.~\ref{fig:klein_agent_nd}(a) shows that $k=0$ results in essentially uniform convergence of the agents across parameter space to local minima and saddle points, and hence the quorum variable simply converges near the origin in Fig.~\ref{fig:klein_quorum_nd}(a). The small amount of coupling $k=0.5$ in Fig.~\ref{fig:klein_agent_nd}(b) leads to increased, but still insufficient, clustering of the agents. This manifests itself in Fig.~\ref{fig:klein_quorum_nd}(b) as a shift of the ball of quorum convergence points towards the bottom left corner. $k=1.5$ and $k=2.5$ in Figs.~\ref{fig:klein_agent_nd}(c) and (d) have significantly improved convergence, with strong clustering of the agents in four balls around $(\pm1.2, \pm1.2)^T$. These clusters are located near the minima of the uncorrupted loss function, which occur at $(\pm1.426, \pm1.426, \hdots, \pm1.426)^T$. 

$k=1.5$ and $k=2.5$ have similar quorum convergence plots in Figs.~\ref{fig:klein_quorum_nd}(c) and (d), though the value of the loss in Fig.~\ref{fig:klein_losses_nd}(c) is noticeably different at iteration $10,000$. The difference in the loss function values for the quorum variables are likely hidden by the low-dimensional visualization method. Figs.~\ref{fig:klein_agent_nd}(c) and (d) show that $k=1.5$ has more ``straggler'' agents between the four corner clusters than $k=2.5$, which may shift the quorum convergence points uphill. From a qualitative perspective, both are good choices for tracking minima of the uncorrupted or the non-smoothed loss functions, and could be combined with a learning rate schedule to improve convergence from the cloud of ``starting points'' in Figs.~\ref{fig:klein_agent_nd}(c) and (d).

As $k$ is increased further to $k=7.5$, the coupling begins to grow too strong. The distinct agent clusters attempt to merge, as seen in Fig.~\ref{fig:klein_agent_nd}(e). The result of this is seen in Fig.~\ref{fig:klein_quorum_nd}(e), where there are scattered quorum convergence points between the clusters. Finally, for $k=11.5$, the coupling is too great, and convergence of both the agents and the quorum variables in Figs.~\ref{fig:klein_agent_nd}(f) and \ref{fig:klein_quorum_nd}(f) respectively are both near the origin.

Taken together, Figs.~\ref{fig:kleinberg}-\ref{fig:klein_quorum_nd} provide significant qualitative insight into the convergence of distributed SGD algorithms, both with and without momentum. In one-dimension and in high-dimensional simulations, there is an optimum level of coupling which represents an ideal balance between a) the ability of the agents to explore the loss function, and b) concentration of the distribution of final iterates. Pushing $k$ too high will lead to convergence near the initialization of $\comb$ and ultimately to reduced smoothing of the loss function, while setting $k$ too low will lead to poor convergence of the quorum variable due to a lack of clustering of the agents. Intermediate values of $k$ lead to concentration of the agents around deep and wide minima of the smoothed loss, which will generally lie close to the minima of the uncorrupted loss; convergence can be improved from here with a learning rate schedule.

The optimum value of $k$ is set by the size of the gradients in comparison to the noise level. In the simulation setup used here, this corresponds to a tradeoff between the value of $F$, which sets the gradient magnitudes, and the width of the noise distribution. By setting the width of the noise distribution very high, the optimum $k$ value can be shifted to a large value, so that numerical stability issues arise before performance begins to degrade. Similarly, with small width and small $F$, the optimum value of $k$ can be very small. In Sec.~\ref{sec:dnn}, we will see a manifestation of a similar phenomenon in deep networks for the testing loss.

\section{Convergence analysis}
\label{sec:conv}
We now provide contraction-based convergence proofs for QSGD and EASGD in the strongly convex setting. In the original work on EASGD, rigorous bounds were found for multivariate quadratic objectives in discrete-time, and the analysis for a general strongly convex objective was restricted to an inequality on the iteration for several relevant variances \citep{EASGD}. The results in this section thus extend previously available convergence results for EASGD, and contain new results for QSGD. We furthermore present convergence results for QSGD with momentum.

A significant theme of this section is that the general methodology of Thm.~\ref{thm:sync_noise} can be applied to produce bounds on the expected distance of the quorum variable from the global minimizer of a strongly convex function, again split into a sum of two terms, one based on the averaged noise and one based on bounding the distortion vector $\beps$. We also demonstrate in this section that an optimality result obtained for EASGD in discrete-time in \citet{EASGD} can be obtained through a straightforward application of stochastic calculus in continuous-time, and that the same result applies for QSGD.
\subsection{QSGD convergence analysis}
We first present a simple lemma describing convergence of deterministic distributed gradient descent with arbitrary coupling.
\begin{lem}
    \label{lem:sync}
    Consider the all-to-all coupled system of ordinary differential equations
    \begin{equation}
        \dot{\bx}^i = -\nabla f(\bx^i) + \sum_{j \neq i}\left(\bu\left(\bx^j\right) - \bu\left(\bx^i\right)\right),
        \label{eqn:gen_coup}
    \end{equation}
    with $\bx^i \in \mathbb{R}^n$ for $i = 1, \hdots, p$. Assume that $-\nabla f - p\bu$ is contracting in some metric with rate $\lambda_1$, and that $-\nabla f$ is contracting in some (not necessarily the same) metric with rate $\lambda_2$. Then all $\bx^i$ globally exponentially converge to a critical point of $f$.
\end{lem}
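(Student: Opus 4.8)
The plan is to split the argument into a \emph{synchronization} step and a \emph{mean-convergence} step, exactly as in the proof of Thm.~\ref{thm:sync_noise}. First I would rewrite the coupling: since $\sum_{j\neq i}\left(\bu(\bx^j)-\bu(\bx^i)\right)=\sum_{j=1}^{p}\bu(\bx^j)-p\,\bu(\bx^i)$, the system (\ref{eqn:gen_coup}) reads $\dot{\bx}^i=-\nabla f(\bx^i)-p\,\bu(\bx^i)+\bs(t)$ with $\bs(t):=\sum_{j=1}^{p}\bu(\bx^j(t))$ common to all agents. Treating $\bs(t)$ as an exogenous input, the virtual system $\dot{\by}=-\nabla f(\by)-p\,\bu(\by)+\bs(t)$ has Jacobian $\nabla\!\left(-\nabla f-p\bu\right)(\by)$, which does not see the additive term and is therefore contracting with rate $\lambda_1$ by hypothesis. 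Each $\bx^i$ is a particular solution of this virtual system, so Thm.~\ref{thm:part_cont} together with the contraction estimate (\ref{eqn:contr_forget}) gives $\Vert\bx^i-\bx^j\Vert\to 0$ exponentially with rate $\lambda_1$ (passing from the $\lambda_1$-metric to the Euclidean norm costs only that metric's condition number). Using the identity $\sum_{i<j}\Vert\bx^i-\bx^j\Vert^2=p\sum_i\Vert\bx^i-\comb\Vert^2$ employed in \citet{tab-slot}, this yields $\sum_i\Vert\bx^i-\comb\Vert^2\le C_1 e^{-2\lambda_1 t}$ with $C_1$ depending on the initial spread.

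Next I would sum (\ref{eqn:gen_coup}) over $i$; the coupling terms cancel identically, since each $\bu(\bx^j)$ enters the double sum $p-1$ times with a plus and $p-1$ times with a minus sign. This leaves $\dot{\comb}=-\tfrac1p\sum_i\nabla f(\bx^i)$, and adding and subtracting $\nabla f(\comb)$ gives $\dot{\comb}=-\nabla f(\comb)+\beps$ with $\beps:=\nabla f(\comb)-\tfrac1p\sum_i\nabla f(\bx^i)$. The Taylor-with-integral-remainder estimate of Thm.~\ref{thm:sync_noise}, together with Assumption~\ref{asmpt:hess}, then gives $\Vert\beps\Vert\le\frac{\sqrt n\,Q}{2p}\sum_i\Vert\bx^i-\comb\Vert^2\le A\,e^{-2\lambda_1 t}$ for some constant $A$.

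It remains to treat $\dot{\comb}=-\nabla f(\comb)+\beps$ as an exponentially decaying perturbation of the noise-free flow $\dot{\bx}_{\text{nf}}=-\nabla f(\bx_{\text{nf}})$. Because $-\nabla f$ is autonomous and contracting with rate $\lambda_2>0$, it has a unique, globally exponentially stable equilibrium $\bx^\ast$ (its time-$T$ flow map is a contraction of $\mathbb{R}^n$ for $T$ large, and the resulting fixed point generates a $T$-periodic trajectory, which under a contracting autonomous flow must be constant); this $\bx^\ast$ satisfies $\nabla f(\bx^\ast)=0$ and is thus a critical point of $f$ --- indeed its unique minimizer. Applying Lemma~\ref{lem:pert} with $\bx^\ast$ as the reference trajectory of the unperturbed system and the bound $\Vert\beps\Vert\le A e^{-2\lambda_1 t}$ (so that $B=0$ in (\ref{eqn:rob})) gives $\Vert\comb(t)-\bx^\ast\Vert\to 0$ exponentially, after transients of rates $2\lambda_1$ and $\lambda_2$. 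Combining with the synchronization bound via $\Vert\bx^i(t)-\bx^\ast\Vert\le\Vert\bx^i(t)-\comb(t)\Vert+\Vert\comb(t)-\bx^\ast\Vert$ shows that every $\bx^i$ converges globally exponentially to the critical point $\bx^\ast$, at rate $\min(\lambda_1,\lambda_2)$.

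The step I expect to be the main obstacle is the last one: justifying that the contracting noise-free gradient flow actually possesses an equilibrium (the ``critical point'' the statement refers to), and then keeping track of the two distinct metrics --- one certifying contraction of $-\nabla f-p\bu$, the other of $-\nabla f$ --- and their condition-number prefactors, so that the synchronization estimate, the bound on $\Vert\beps\Vert$, and the convergence of $\comb$ compose into a single clean global rate. The remaining steps are a direct transcription of the apparatus already assembled in Sec.~\ref{sec:sync_noise}.
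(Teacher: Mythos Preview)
Your argument is correct, but it takes a different and more laborious route than the paper, and in doing so imports an assumption the lemma does not make.

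The paper's proof has the same synchronization step as yours (the virtual system $\dot{\by}=-\nabla f(\by)-p\,\bu(\by)+\sum_j\bu(\bx^j)$), but then finishes in one line: once the agents synchronize, the coupling term $\sum_{j\neq i}(\bu(\bx^j)-\bu(\bx^i))$ vanishes identically on the diagonal $\bx^1=\cdots=\bx^p$, so the reduced-order dynamics on the synchronized subspace is simply $\dot{\by}=-\nabla f(\by)$, which is contracting with rate $\lambda_2$ and admits any critical point $\bx^\ast$ as a particular solution. No center-of-mass computation, no $\beps$ bound, no Taylor remainder.

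Your route --- summing to get $\dot{\comb}=-\nabla f(\comb)+\beps$ and bounding $\Vert\beps\Vert$ via Assumption~\ref{asmpt:hess} --- works, but Assumption~\ref{asmpt:hess} (the uniform bound $Q$ on the Hessians of the gradient components) is \emph{not} among the hypotheses of the lemma. The lemma only assumes contraction of $-\nabla f-p\bu$ and of $-\nabla f$; neither gives you a two-sided bound on $\nabla^2 f$ or a third-derivative bound. So your proof, as written, establishes a slightly weaker statement than the one claimed. The paper's reduced-order argument sidesteps this entirely because on the synchronized manifold there is no disturbance term to bound.

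What your approach does buy is a more explicit accounting of the transient rates and a more careful justification of the existence of the equilibrium $\bx^\ast$ (the paper simply asserts it as a particular solution). But the machinery you invoke --- the $\beps$ decomposition and Lemma~\ref{lem:pert} --- is the apparatus the paper reserves for the \emph{stochastic} setting (Thm.~\ref{thm:qsgd}), where the coupling does not vanish after synchronization and a genuine perturbation argument is needed. In the deterministic lemma here, that machinery is overkill.
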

\begin{proof}
    Consider the virtual system
    \begin{equation}
        \dot{\by} = -\nabla f(\by) - p\bu(\by) + \sum_{j=1}^p \bu(\bx^j)\nonumber.
    \end{equation}
    This system is contracting by assumption, and each of the individual agents is a particular solution. The agents therefore globally exponentially synchronize with rate $\lambda_1$. After this exponential transient, the dynamics of each agent is described by the reduced-order virtual system
    \begin{equation}
        \dot{\by} = - \nabla f(\by)\nonumber.
    \end{equation}
    By assumption, this system is contracting in some metric with rate $\lambda_2$, and has a particular solution at any critical point $\bx^*$ such that $\nabla f(\bx^*) = 0$.
\end{proof}
\begin{rmk}
    This simple lemma demonstrates that any form of coupling can be used so long as the quantity $-\nabla f(\by) - p\bu(\by)$ is contracting to guarantee exponential convergence to a critical point. A simple choice is $\bu(\bx^j) = \frac{k}{p}\bx^j$ where $k$ is the coupling gain, corresponding to balanced and equal-strength all-to-all coupling. Then (\ref{eqn:gen_coup}) can be simplified to
    \begin{align}
        \dot{\bx}^i &= -\nabla f(\bx^i) + k \left(\comb - \bx^i\right),
        \label{eqn:com_coup}
    \end{align}
    which is QSGD without noise. Note that all-to-all coupling can thus be implemented with only $2 p$ directed connections by communicating with the center of mass variable.
\end{rmk}
\begin{rmk}
    If $f$ is $l$-strongly convex, $-\nabla f$ will be contracting in the identity metric with rate $l$.
\end{rmk}
\begin{rmk}
    If $f$ is locally $l$-strongly convex, $-\nabla f$ will be locally contracting in the identity metric with rate $l$. For example, for a non-convex objective with initializations $\bx^i(0)$ in a strongly convex region of parameter space, we can conclude exponential convergence to a local minimizer for each agent.
\end{rmk}
If $f$ is strongly convex, the coupling between agents provides no advantage in the deterministic setting, as they would individually contract towards the minimum regardless. For stochastic dynamics, however, coupling can improve convergence. We now demonstrate the ramifications of the results in Sec.~\ref{sec:sync_noise} in the context of QSGD agents with the following theorem.

\begin{thm}
    \label{thm:qsgd}
    Consider the QSGD algorithm
    \begin{equation}
        d\bx^i = \left(-\nabla f(\bx^i) + k(\comb - \bx^i)\right) dt + \sqrt{\frac{\eta}{b}}\bB(\bx^i) d\bW\nonumber,
        \label{eqn:stoch_com_coup}
    \end{equation}
    with $\bx^i \in \mathbb{R}^n$ for $i = 1, \hdots, p$. Assume that the conditions in Assumption \ref{asmpt:hess} hold, that $\bB \bB^T = \bSig$ is bounded such that $Tr(\bSig) \leq C$ uniformly, and that $f$ is $\lambda$-strongly convex. Then, after exponential transients of rate $\lambda$ and $\lambda + k$, the expected difference between the center of mass trajectory $\comb$ and the global minimizer $\bx^*$ of $f$ is given by
    \begin{equation}
        \Exp\left[\Vert \bx^* - \comb \Vert\right] \leq \frac{Q (p-1) C \sqrt{n}\eta}{4 p b\lambda(\lambda + k)} + \sqrt{\frac{\eta C}{2 b p \lambda}}.
        \label{eqn:qsgd_conv}
    \end{equation}
\end{thm}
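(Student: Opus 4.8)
The plan is to bound $\Exp[\Vert \bx^* - \comb\Vert]$ by splitting it through an intermediate deterministic trajectory and exploiting the same decomposition used in Thm.~\ref{thm:sync_noise}. Specifically, I would write the center of mass dynamics (\ref{eqn:com_ito_3}) as a perturbed, noisy gradient flow
\begin{equation}
    d\comb = \left[-\nabla f(\comb) + \beps\right]dt + \sqrt{\tfrac{\eta}{bp^2}}\bT d\bW, \nonumber
\end{equation}
and compare it in two stages: (i) compare $\comb$ to the trajectory $\bx_d$ of the \emph{deterministic, perturbed} flow $\dot{\bx}_d = -\nabla f(\bx_d) + \beps$, using a stochastic contraction argument (Cor.~\ref{cor:stoch_det}); and (ii) compare $\bx_d$ to the true noise-free flow $\dot{\bx}_{\text{nf}} = -\nabla f(\bx_{\text{nf}})$ whose unique equilibrium is $\bx^*$, using the deterministic robustness result Lemma~\ref{lem:pert}. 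Since $f$ is $\lambda$-strongly convex, $-\nabla f$ is contracting in the identity metric with rate $\lambda$ (the remark following Lemma~\ref{lem:sync}), so $\bx^* $ is a particular solution and everything is available.

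For stage (ii): Lemma~\ref{lem:pert} with $\bThet = \bI$, $\chi = 1$, applied to the perturbed system $\dot{\bx}_d = -\nabla f(\bx_d) + \beps$ versus the unperturbed $\dot{\bx}_{\text{nf}} = -\nabla f(\bx_{\text{nf}})$ with equilibrium $\bx^*$, gives $\dot{R} + \lambda R \leq \Vert\beps\Vert$ where $R = \Vert \bx_d - \bx^*\Vert$; taking expectations and using the bound $\Exp[\Vert\beps\Vert] \leq \frac{(p-1)\sqrt{n}QC\eta}{4p(k-\bal)b}$ from (\ref{eqn:eps_bound}) — here with the contraction rate $k-\bal$ replaced by $\lambda + k$, since for a $\lambda$-strongly convex $f$ the maximum eigenvalue of $-\nabla^2 f$ is $-\lambda$, so the synchronization rate in Lemma~\ref{lem:sync_nomom} becomes $\lambda + k$ and the corresponding synchronization bound (\ref{eqn:sync_com}) has $(k-\bal)$ replaced by $(\lambda+k)$ — yields, after transients of rate $\lambda$, the first term $\frac{Q(p-1)C\sqrt{n}\eta}{4pb\lambda(\lambda+k)}$. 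For stage (i): Cor.~\ref{cor:stoch_det} applied to $\comb$ (noisy, with diffusion $\sqrt{\eta/(bp^2)}\,\bT$) and $\bx_d$ (deterministic, same drift $-\nabla f + \beps$, which is contracting with rate $\lambda$ — note $\beps$ has zero Jacobian contribution since it does not depend on $\comb$ once we freeze the off-center-of-mass degrees of freedom, or more carefully one treats $\beps$ as an exogenous input identical in both systems) gives $\Exp[\Vert\comb - \bx_d\Vert^2] \leq \frac{C'}{2\lambda}$ after transients of rate $2\lambda$, where $C' = \sup \mathrm{Tr}\!\left(\frac{\eta}{bp^2}\bT\bT^T\right) = \frac{\eta}{bp^2}\sup\mathrm{Tr}\!\left(\sum_i \bSig(\bx^i)\right) \leq \frac{\eta}{bp^2}\cdot pC = \frac{\eta C}{bp}$; hence $\Exp[\Vert\comb - \bx_d\Vert] \leq \sqrt{\Exp[\Vert\comb - \bx_d\Vert^2]} \leq \sqrt{\frac{\eta C}{2bp\lambda}}$ by Jensen, which is the second term. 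Combining with the triangle inequality $\Exp[\Vert\bx^* - \comb\Vert] \leq \Exp[\Vert\bx^* - \bx_d\Vert] + \Exp[\Vert\bx_d - \comb\Vert]$ gives (\ref{eqn:qsgd_conv}).

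The main obstacle I anticipate is making stage (i) rigorous: the drift $-\nabla f(\comb) + \beps$ is contracting in $\comb$ only if $\beps$ is regarded as an external signal common to both the stochastic and deterministic copies, but $\beps$ itself depends on the full agent configuration $(\bx^1,\dots,\bx^p)$, which is noise-driven. The clean way around this is to fix a single realization of the agent trajectories (hence of the process $\beps_t$), run both the stochastic $\comb$-equation and the deterministic $\bx_d$-equation against that \emph{same} $\beps_t$, apply Cor.~\ref{cor:stoch_det} conditionally (its drift now genuinely being the contracting map $\by \mapsto -\nabla f(\by) + \beps_t$ with a time-varying but $\beps$-independent Jacobian $-\nabla^2 f$), and then take the outer expectation over the agent noise; the bound on $\mathrm{Tr}(\bT\bT^T)$ is uniform in the configuration so $C'$ is deterministic. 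One should also note that strictly speaking $\bx_d$ and $\bx_{\text{nf}}$ should be initialized appropriately (e.g. $\bx_d(0) = \comb(0)$) so that the decaying transients are of the stated rates; following the paper's convention we suppress the exponential transient terms and report only the asymptotic constants and their rates.
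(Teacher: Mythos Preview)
Your proposal is correct and mirrors the paper's proof essentially step for step: the paper introduces the same three-level hierarchy $\by^1 = \bx_{\text{nf}}$, $\by^2 = \bx_d$, $\by^3 = \comb$, bounds $\Exp[\Vert\by^2 - \bx^*\Vert]$ via Lemma~\ref{lem:pert} with the $\beps$-bound (\ref{eqn:eps_bound}) (replacing $k-\bal$ by $\lambda+k$ exactly as you note), bounds $\Exp[\Vert\by^3 - \by^2\Vert]$ via Cor.~\ref{cor:stoch_det} and Jensen, and finishes with the triangle inequality. Your anticipated obstacle is handled in the paper precisely as you suggest, by writing $\beps = \beps(\bx^1,\dots,\bx^p)$ as an exogenous input to the virtual systems so that the Jacobian in the $\by$-variable is just $-\nabla^2 f$.
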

\begin{proof}
    We first sum the dynamics of the individual agents to compute the dynamics of the center of mass variable. This leads to the SDE
    \begin{equation}
        d\comb = \left(-\nabla f(\comb) + \beps\right)dt + \sqrt{\frac{\eta}{bp^2}}\bT d\bW\nonumber,
    \end{equation}
    with $\beps = \nabla f(\comb) - \frac{1}{p}\sum_i \nabla f(\bx^i)$ and $\bT\bT^T = \sum_i \bSig(\bx^i)$ defined exactly as in Sec.~\ref{sec:sync_noise}. Consider the hierarchy of virtual systems
    \begin{align}
        \dot{\by}^1 &= - \nabla f(\by^1)\nonumber,\\
        \dot{\by}^2 &= - \nabla f(\by^2) + \beps(\bx^1, \hdots, \bx^p)\nonumber,\\
        d\by_t^3 &= \left(-\nabla f(\by^3) + \beps(\bx^1, \hdots, \bx^p)\right)dt + \sqrt{\frac{\eta}{bp^2}}\bT(\bx^1, \hdots, \bx^p) d\bW\nonumber.
    \end{align}
    The $\by^1$ system is contracting by assumption, and admits a particular solution $\by^1 = \bx^*$. As in the proof of Lemma \ref{lem:pert}, we can write with $R = \Vert \by^1 - \by^2 \Vert$,
    \begin{equation}
        \dot{R} + \lambda R \leq \Vert \beps \Vert
    \end{equation}
    which shows that $R$ is bounded. Hence, by dominated convergence,
    \begin{equation}
        \dot{\overline{\mathbb{E}[R]}} + \lambda \mathbb{E}[R] \leq \mathbb{E}[\Vert \beps \Vert]]
    \end{equation}
    As shown in Sec.~\ref{sec:sync_noise}, $\Exp[\Vert \beps \Vert] \leq \frac{Q (p-1) C \eta \sqrt{n}}{4p(\lambda + k) b}$ after exponential transients of rate $\lambda + k$\footnote{\normalsize In Sec.~\ref{sec:sync_noise}, the denominator contained the factor $k - \bal$ rather than $k + \lambda$. Strong convexity of $f$ was not assumed, so that the contraction rate of the coupled system was $k - \bal$. In this proof, strong convexity of $f$ implies that the contraction rate of the coupled system is $k + \lambda$.}. Hence by Lemma \ref{lem:pert}, the difference between the $\by^1$ and $\by^2$ systems can be bounded as
    \begin{equation}
        \Exp[\Vert \by^2 - \bx^* \Vert] \leq \frac{Q (p-1) C \eta \sqrt{n}}{4p(\lambda + k)\lambda b}\nonumber
    \end{equation}
    after exponential transients of rate $\lambda$. The $\by^2$ system is contracting for any input $\beps$, and the $\by^3$ system is identical with the addition of an additive noise term. By Cor.~\ref{cor:stoch_det}, after exponential transients of rate $\lambda$,
    \begin{equation}
        \Exp[\Vert \by^3 - \by^2 \Vert^2] \leq \frac{\eta C}{2 b p \lambda}\nonumber
    \end{equation}
    By Jensen's inequality, and noting that $\sqrt{\cdot}$ is a concave, increasing function,
    \begin{equation}
        \Exp[\Vert \by^3 - \by^2 \Vert] \leq \sqrt{\Exp[\Vert \by^3 - \by^2 \Vert^2]} \leq \sqrt{\frac{\eta C}{2 b p \lambda}}\nonumber
    \end{equation}
    Finally, note that $\comb$ is a particular solution of the $\by^3$ virtual system. From these observations and an application of the triangle inequality, after exponential transients,
    \begin{equation}
        \Exp[\Vert \comb - \bx^* \Vert] \leq \frac{Q (p-1) C \sqrt{n}\eta}{4 pb \lambda(\lambda + k)} + \sqrt{\frac{\eta C}{2 b p \lambda}}\nonumber
    \end{equation}
    This completes the proof.
\end{proof}
As in Sec.~\ref{sec:sync_noise}, the bound (\ref{eqn:qsgd_conv}) consists of two terms. The first term originates from a lack of complete synchronization and can be decreased by increasing $k$. The second term comes from the additive noise, and can be decreased by increasing the number of agents. Both terms can be decreased by decreasing $\frac{\eta}{b}$, as this ratio sets the magnitude of the noise, and hence the size of both the disturbance and the noise term.

State- and time-dependent couplings of the form $k(\comb, t)$ are also immediately applicable with the proof methodology above. For example, increasing $k$ over time can significantly decrease the influence of the first term in (\ref{eqn:qsgd_conv}), leaving only a bound essentially equivalent to linear noise averaging. For non-convex objectives, this suggests choosing low values of $k(\comb, t)$ in the early stages of training for exploration, and larger values near the end of training to reduce the variance of $\comb$ around a minimum. By the synchronization and noise argument in Sec.~\ref{sec:sync_noise} and the considerations in Sec.~\ref{sec:kleinberg}, this will also have the effect of improving convergence to a minimum of the true loss function, rather than the smoothed loss. If accessible, local curvature information could be used to determine when $\comb$ is near a local minimum, and therefore when to increase $k$. Using state- and time-dependent couplings would change the duration of exponential transients, but the result in Thm.~\ref{thm:qsgd} would still hold.

It is worth comparing Eq.~\ref{eqn:stoch_com_coup} to a bound obtained with the same methodology for standard SGD. With the stochastic dynamics
\begin{equation}
    d\bx = -\nabla f(\bx)dt + \sqrt{\frac{\eta}{b}}\bB d\bW,\nonumber
\end{equation}
and the same assumptions as in Thm.~\ref{thm:qsgd}, the expected difference after exponential transients between a critical point of $f$ and the stochastic $\bx$ is given by Cor.~\ref{cor:stoch_det} and an application of Jensen's inequality as
\begin{equation}
    \Exp\left[\Vert \bx - \bx^*\Vert\right]  \leq \sqrt{\frac{\eta C}{2 b \lambda}}.\nonumber
\end{equation}
In the distributed, synchronized case described by Thm.~\ref{thm:qsgd}, the deviation is reduced by a factor of $\frac{1}{\sqrt{p}}$ in exchange for an additional additive term. This additive term is related to the noise strength $\frac{C \eta}{b}$, the bound $Q$, the number of parameters $n$, and is divided by $\lambda(\lambda + k)$ - i.e., is smaller for more strongly convex functions and with more synchronized dynamics.

\subsection{EASGD convergence analysis}
We now incorporate the additional dynamics present in the EASGD algorithm. First, we prove a lemma demonstrating convergence to the global minimum of a strongly convex function in the deterministic setting.

\begin{lem}
    \label{lem:easgd_determ}
    Consider the deterministic continuous-time EASGD algorithm
    \begin{align}
        \dot{\bx}^i &= -\nabla f(\bx^i) + k (\txb - \bx^i)\nonumber,\\
        \dot{\txb} &= k p \left(\comb - \txb\right)\nonumber,
    \end{align}
    with $\bx^i \in \mathbb{R}^n$ for $i = 1, \hdots, p$. Assume $f$ is $\lambda$-strongly convex. Then all agents and the quorum variable $\txb$ globally exponentially converge to the unique global minimum $\bx^*$ with rate
    \begin{equation}
        \gamma \geq \frac{\lambda + k + k p}{2} - \sqrt{\left(\frac{\lambda + k - kp}{2}\right)^2 + k^2 p}.
        \label{eqn:easgd_det_conv_rate}
    \end{equation}
\end{lem}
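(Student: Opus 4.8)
The plan is to use the same two-stage template as in Lemmas~\ref{lem:sync} and \ref{lem:qsgd_mom_sync}: first show that the individual agents synchronize exponentially fast, and then pass to a reduced-order system in the pair $(\comb,\txb)$ and exhibit a metric in which that reduced system contracts toward $(\bx^*,\bx^*)$ at the rate $\gamma$ of (\ref{eqn:easgd_det_conv_rate}).

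For the first stage I would treat $\txb(t)$ as an exogenous input and introduce the virtual system $\dot{\by}=-\nabla f(\by)+k(\txb-\by)$, whose Jacobian $-\nabla^2 f(\by)-k\bI$ is symmetric and, by $\lambda$-strong convexity, satisfies $-\nabla^2 f(\by)-k\bI\leq -(\lambda+k)\bI$ uniformly. Hence this system is contracting in the identity metric with rate $\lambda+k$, and since each agent $\bx^i$ is a particular solution, Thm.~\ref{thm:part_cont} gives that all agents converge to one another — equivalently to $\comb$ — exponentially at rate at least $\lambda+k$. Averaging the agent equations, the pair $(\comb,\txb)$ then obeys
\begin{equation}
    \dot{\comb}=-\nabla f(\comb)+k(\txb-\comb)+\beps,\qquad \dot{\txb}=kp(\comb-\txb),\nonumber
\end{equation}
with $\beps=\nabla f(\comb)-\tfrac{1}{p}\sum_i\nabla f(\bx^i)$; since $\nabla f$ is Lipschitz along the (bounded) trajectory, $\Vert\beps\Vert$ is controlled by $\max_i\Vert\bx^i-\comb\Vert$ and therefore decays exponentially at rate $\lambda+k$.

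The heart of the argument is the second stage: showing the reduced system $\dot{\comb}=-\nabla f(\comb)+k(\txb-\comb)$, $\dot{\txb}=kp(\comb-\txb)$ contracts to its equilibrium $(\bx^*,\bx^*)$, which exists and is unique because $\nabla f(\bx^*)=0$ and $f$ is strongly convex. Its Jacobian is the block matrix
\begin{equation}
    \bJ=\begin{pmatrix}-\nabla^2 f(\comb)-k\bI & k\bI\\ kp\,\bI & -kp\,\bI\end{pmatrix}.\nonumber
\end{equation}
I would take the constant metric transformation $\bThet=\mathrm{diag}(\bI,\,p^{-1/2}\bI)$, which makes the off-diagonal blocks of $\bThet\bJ\bThet^{-1}$ both equal to $k\sqrt{p}\,\bI$; because $\bThet$ acts as a scalar on each block, the top-left block is unchanged, and using $\nabla^2 f\geq\lambda\bI$ the symmetric part of $\bThet\bJ\bThet^{-1}$ is bounded above by the matrix with diagonal blocks $-(\lambda+k)\bI$, $-kp\,\bI$ and off-diagonal blocks $k\sqrt{p}\,\bI$. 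A coordinate permutation turns this into $n$ copies of $\left(\begin{smallmatrix}-(\lambda+k) & k\sqrt{p}\\ k\sqrt{p} & -kp\end{smallmatrix}\right)$, whose largest eigenvalue is precisely $-\gamma$ with $\gamma$ as in (\ref{eqn:easgd_det_conv_rate}) — equivalently, this is the contraction rate supplied by \citet{slot-modular}, Example~3.8. Thus the reduced system is contracting with rate $\gamma$, and checking $\gamma>0$ reduces to the inequality $\lambda>0$. Since $(\bx^*,\bx^*)$ is a particular solution it attracts all trajectories of the reduced system; feeding in the exponentially decaying $\beps$ through Lemma~\ref{lem:pert} does not change the asymptotic rate, because $\beps$ decays at rate $\lambda+k>\gamma$. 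Combining with the first stage gives global exponential convergence of every $\bx^i$ and of $\txb$ to $\bx^*$ at rate $\gamma$; uniqueness of $\bx^*$ is immediate from strong convexity.

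\textbf{Main obstacle.} The delicate step is the metric construction for the reduced system: identifying the weighting $p^{-1/2}$ that symmetrizes the asymmetric cross-coupling (gain $k$ into the $\comb$ channel versus $kp$ into the $\txb$ channel), verifying that the strong-convexity bound $\nabla^2 f\geq\lambda\bI$ passes cleanly through the block-diagonal scaling, and carrying out the $2\times2$ eigenvalue computation to recognize the bound $-\gamma$ and hence (\ref{eqn:easgd_det_conv_rate}). A more routine secondary point is justifying the asymptotic reduction to the $(\comb,\txb)$ system — i.e., confirming that trajectories remain bounded and that the synchronization defect $\beps$ decays strictly faster than $\gamma$, so that Lemma~\ref{lem:pert} yields the claimed rate rather than a slower one.
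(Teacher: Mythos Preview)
Your proposal is correct and follows essentially the same approach as the paper: synchronize the agents via the virtual system with rate $\lambda+k$, then exhibit contraction of the reduced $(\comb,\txb)$ system using a diagonal metric that symmetrizes the asymmetric cross-coupling, and read off $\gamma$ from the resulting $2\times 2$ block eigenvalue (which is exactly the \citet{slot-modular}, Example~3.8 computation the paper cites). Your metric $\bThet=\mathrm{diag}(\bI,p^{-1/2}\bI)$ differs from the paper's $\bThet=\mathrm{diag}(\sqrt{p}\,\bI,\bI)$ only by an overall scalar and yields the identical generalized Jacobian; your explicit tracking of the synchronization defect $\beps$ via Lemma~\ref{lem:pert} is a slightly more careful version of the paper's informal ``on the synchronized subspace'' reduction.
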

\begin{proof}
    By Thm.~\ref{thm:sync} and strong convexity of $f$, the individual $\bx^i$ trajectories globally exponentially synchronize with rate $\lambda + k$. On the synchronized subspace, the system can be described by the reduced-order virtual system
    \begin{align}
        \dot{\by} &= - \nabla f(\by) + k (\txb - \by)\nonumber,\\
        \dot{\txb} &= k p \left(\by - \txb\right)\nonumber.
    \end{align}
    The system Jacobian is then given by
    \begin{equation}
        \mathbf{J} = \begin{pmatrix} -\nabla^2 f(\by) - k \bI & k\bI \\ k p\bI & - k p \bI \end{pmatrix}\nonumber.
    \end{equation}
    Choosing a metric transformation $\bThet = \begin{pmatrix} \sqrt{p} \bI & 0 \\ 0 & \bI \end{pmatrix}$, the generalized Jacobian becomes
    \begin{equation}
        \bThet \mathbf{J} \bThet^{-1} = \begin{pmatrix} -\nabla^2 f(\by) - k \bI & \sqrt{p} k\bI \\ \sqrt{p}k\bI & - k p \bI \end{pmatrix}\nonumber,
    \end{equation}
    which is clearly symmetric. A sufficient condition for negative definiteness of this matrix is that $\left(\lambda + k\right)k p > k^2 p$ \citep{Wang2005, HornJohn}. Rearranging leads to the condition $\lambda > 0$, which is satisfied by strong convexity of $f$. The virtual system is therefore contracting. Finally, note that $\by = \txb = \bx^*$ where $\bx^*$ is the unique global minimum is a particular solution. All trajectories thus globally exponentially converge to this minimum. The lower bound on the contraction rate in the statement of the theorem can be found by applying the result in \citet{slot-modular}, Example 3.8.
\end{proof}
Just as in Thm. \ref{thm:qsgd}, we now turn to a convergence analysis for the EASGD algorithm using the results of Lemma \ref{lem:easgd_determ}.

\begin{thm}
    \label{thm:easgd_converge}
    Consider the continuous-time EASGD algorithm
    \begin{align}
        d\bx^i &= \left(-\nabla f\left(\bx^i\right) + k \left(\txb - \bx^i\right)\right)dt + \sqrt{\frac{\eta}{b}}\bB(\bx^i) d \bW^i \nonumber\\
        d\txb &= k p \left(\comb - \txb\right)dt\nonumber
    \end{align}
    for $i = 1, \hdots, p$. Assume that $f$ is $\lambda$-strongly convex and that the conditions in Assumption \ref{asmpt:hess} are satisfied. Let $\gamma$ denote the contraction rate of the deterministic, fully synchronized EASGD system in the metric $\bM = \bThet^T \bThet$ with $\bThet$ the metric transformation from Lemma \ref{lem:easgd_determ}, as lower bounded in (\ref{eqn:easgd_det_conv_rate}). Further assume that $Tr(\bB^T \bM \bB) \leq C(p)$ with $C$ a positive constant potentially dependent on $p$ through the dependence of $\bM$ on $p$. Then, after exponential transients of rate $\gamma$ and $\lambda + k$,
    \begin{equation}
        \Exp\left[\Vert \bz - \bz^*\Vert\right] \leq \frac{Q (p-1) C(p) \sqrt{n}\eta}{4 b\sqrt{p} \gamma (\lambda + k)} + \sqrt{\frac{\eta C(p)}{2 b p \gamma}}
    \end{equation}
    where $\bz = \left(\comb, \txb\right)$ and $\bz^* = \left(\bx^*, \bx^*\right)$.
\end{thm}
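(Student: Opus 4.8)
The plan is to run the argument of Thm.~\ref{thm:qsgd} again, but in the metric $\bM = \bThet^T\bThet$ of Lemma~\ref{lem:easgd_determ} rather than the Euclidean one and with $\lambda$ replaced by the reduced-order contraction rate $\gamma$ of (\ref{eqn:easgd_det_conv_rate}). As there, I would split the error $\Exp[\Vert\bz - \bz^*\Vert]$ into a piece coming from incomplete synchronization of the agents and a piece coming from the averaged additive noise, and bound each with a short hierarchy of virtual systems in the combined variable $\bz = (\comb, \txb)$.

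First I would record the synchronization of the individual agents. Since $f$ is $\lambda$-strongly convex the maximum eigenvalue $\bal$ of $-\nabla^2 f$ obeys $\bal \le -\lambda$, so $k > \bal$ holds for every $k > 0$ and Thm.~\ref{thm:sync} gives global exponential synchronization of the $\bx^i$ at rate $\lambda + k$; Lemma~\ref{lem:sync_nomom} then supplies the synchronization condition bounding $\Exp[\sum_i \Vert \bx^i - \comb\Vert^2]$. Summing the agent equations over $i$ and combining the independent Gaussian increments exactly as in the proof of Thm.~\ref{thm:sync_noise} yields $d\comb = (-\nabla f(\comb) + k(\txb - \comb) + \beps)\,dt + \sqrt{\eta/(bp^2)}\,\bT\,d\bW$ with $\beps = \nabla f(\comb) - \tfrac1p\sum_i \nabla f(\bx^i)$ and $\bT\bT^T = \sum_i \bSig(\bx^i)$, while $d\txb = kp(\comb - \txb)\,dt$ is unchanged; these are precisely the dynamics of the reduced virtual system of Lemma~\ref{lem:easgd_determ}, perturbed on its first block by the disturbance $\beps$ and driven by additive noise on the first block only. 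The Taylor-with-integral-remainder estimate of Thm.~\ref{thm:sync_noise}, together with Assumption~\ref{asmpt:hess} and the synchronization condition, bounds $\Exp[\Vert\beps\Vert]$ by a constant of order $\tfrac{Q(p-1)C(p)\sqrt{n}\,\eta}{4p(\lambda+k)b}$ after transients of rate $\lambda + k$.

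Next I would set up the hierarchy consisting of the deterministic reduced EASGD field $\dot{\bu}^1 = \mathbf{G}(\bu^1)$ of Lemma~\ref{lem:easgd_determ}, the same field perturbed by the disturbance, $\dot{\bu}^2 = \mathbf{G}(\bu^2) + (\beps,\mathbf{0})$, and $\bu^3$, which obeys the $\bu^2$ drift together with the additive noise $\sqrt{\eta/(bp^2)}\,\bT\,d\bW$ on its first block. Here $\bu^1$ is contracting in $\bM$ with rate $\gamma$ and has the particular solution $\bu^1 \equiv \bz^* = (\bx^*, \bx^*)$; a state-independent forcing does not change the Jacobian, so $\bu^2$ and $\bu^3$ are likewise contracting in $\bM$ at rate $\gamma$; and $\bz = (\comb, \txb)$ is a particular solution of $\bu^3$. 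Comparing $\bu^1$ and $\bu^2$ via Lemma~\ref{lem:pert} in the metric $\bM$, with $R = \Vert\bThet(\bu^1 - \bu^2)\Vert$, gives $\dot R + \gamma R \le \Vert\bThet(\beps,\mathbf{0})\Vert = \sqrt{p}\,\Vert\beps\Vert$; taking expectations (dominated convergence, as in Thm.~\ref{thm:qsgd}) and inserting the $\Exp[\Vert\beps\Vert]$ bound gives $\Exp[R] \le \sqrt{p}\,\Exp[\Vert\beps\Vert]/\gamma$ after transients, and since $\Vert\bThet^{-1}\Vert = 1$ this also bounds $\Exp[\Vert\bu^1 - \bu^2\Vert]$, producing the first term $\tfrac{Q(p-1)C(p)\sqrt{n}\,\eta}{4b\sqrt{p}\,\gamma(\lambda+k)}$. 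Comparing $\bu^2$ and $\bu^3$ via Cor.~\ref{cor:stoch_det} in the metric $\bM$ — with $\beta = 1$ the smallest eigenvalue of $\bM$ and the hypothesized noise-trace bound $C(p)$ — gives $\Exp[\Vert\bu^3 - \bu^2\Vert^2] \le \eta C(p)/(2bp\gamma)$, and Jensen's inequality yields the second term. A final triangle inequality, using $\bu^1 \equiv \bz^*$ and that $\bz$ solves the $\bu^3$ system, assembles the claimed bound.

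The step I expect to cause the most friction is the metric bookkeeping. The $\comb$-block of $\bThet$ equals $\sqrt{p}\,\bI$, so $\beps$ enters the $\bu^1$--$\bu^2$ comparison with an extra factor of $\sqrt{p}$, while $\Vert\bThet^{-1}\Vert = 1$ provides no compensating re-amplification when passing back to the Euclidean norm — this is exactly what turns the $1/p$ of Thm.~\ref{thm:qsgd} into the $1/\sqrt{p}$ appearing in the first term here. One must also keep straight that $C(p)$ is the noise-trace constant measured in $\bM$ (whence its $p$-dependence through the $p\bI$ block), check that the synchronization condition of Lemma~\ref{lem:sync_nomom} can be phrased through the same constant, and remember that the reduction to the field $\mathbf{G}$ is legitimate only after the rate-$(\lambda+k)$ synchronization transient — which is why two transient rates appear in the statement.
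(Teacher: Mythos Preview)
Your proposal is correct and follows essentially the same route as the paper: sum the agent dynamics to obtain the perturbed reduced-order $(\comb,\txb)$ system, set up the three-level hierarchy of virtual systems, bound the $\beps$-driven gap via Lemma~\ref{lem:pert} in the metric $\bM$ (picking up the condition-number factor $\sqrt{p}$), bound the noise-driven gap via Cor.~\ref{cor:stoch_det} with $\beta = 1$, and finish with the triangle inequality. Your metric bookkeeping is exactly what the paper does, and your observation that the Euclidean synchronization constant must be reconciled with $C(p)$ is handled there simply by noting $C(p) \geq Tr(\bSig)$ so that $C(p)$ may be used throughout.
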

\begin{proof}
Adding up the agent dynamics, the center of mass trajectory follows
    \begin{equation}
        d\comb = \left(- \nabla f(\comb) + \beps + k\left(\txb - \comb\right)\right)dt + \sqrt{\frac{\eta}{bp^2}}\bT d\bW\nonumber,
    \end{equation}
    with the usual definitions of $\beps$ and $\bT$. Consider the hierarchy of virtual systems,
    \begin{align*}
        \dot{\by}^1  &= - \nabla f(\by^1) + k \left(\tyb^1 - \by^1\right)\nonumber,\\
        \dot{\tyb}^1 &= k p \left(\by^1 - \tyb^1\right)\nonumber,\\\\
        \dot{\by}^2  &= - \nabla f(\by^2) + k \left(\tyb^2 - \by^2\right) + \beps(\bx^1, \hdots, \bx^p)\nonumber,\\
        \dot{\tyb}^2 &= k p \left(\by^2 - \tyb^2\right)\nonumber,\\\\
        d\by^3  &= \left(- \nabla f(\by^3) + k \left(\tyb^3 - \by^3\right)\right)dt + \beps(\bx^1, \hdots, \bx^p) + \sqrt{\frac{\eta}{bp^2}}\bT(\bx^1, \hdots, \bx^p) d\bW\nonumber,\\
        d\tyb^3 &= k p \left(\by^3 - \tyb^3\right)dt\nonumber.
    \end{align*}
    The first system is contracting towards the unique global minimum with rate $\gamma$ by the assumptions of the theorem and Lemma \ref{lem:easgd_determ}. The second system is contracting for any external input $\beps$, and we have already bounded $\Exp[\Vert \beps \Vert]$ in Sec.~\ref{sec:sync_noise} (the application of the bound is independent of the dynamics of the quorum variable - see App.~\ref{ssec:sync_noise_analysis_EASGD} for details). Let $\bz^i = \begin{pmatrix}\by^i, \tyb^i \end{pmatrix}$ and $\bz^* = \begin{pmatrix} \bx^*, \bx^* \end{pmatrix}$. By an identical argument as in the proof of Thm.~\ref{thm:qsgd} and noting that the condition number of $\bThet$ is $\sqrt{p}$,
    \begin{equation}
        \Exp\left[\left\Vert \bz^2 - \bz^* \right\Vert\right] \leq \frac{\sqrt{p}}{\gamma}\Exp\left[\Vert \beps \Vert\right] \leq \frac{Q(p-1)C(p)\eta\sqrt{n}}{4 \sqrt{p} (\lambda + k)\gamma b}\nonumber
    \end{equation}
    after exponential transients of rate $\gamma$ and $\lambda + k$. Note that $\lambda_{\text{min}}(\bM) = 1$. Hence we can take $\beta = 1$ in Cor.~\ref{cor:stoch_det}, and
    \begin{equation}
        \Exp\left[\Vert \bz^3 - \bz^2\Vert\right] \leq \sqrt{\frac{\eta C(p)}{2b p \gamma}}\nonumber
    \end{equation}
    after exponential transients of rate $\gamma$. Combining these results via the triangle inequality and noting that $\comb, \txb$ is a solution to the $\by^3, \tyb^3$ virtual system, we find that after exponential transients of rate $\gamma$,
    \begin{equation}
        \Exp\left[\Vert \bz - \bz^*\Vert\right] \leq \frac{Q (p-1) C(p)\sqrt{n}\eta}{4 b\sqrt{p} \gamma(\lambda + k)} + \sqrt{\frac{\eta C(p)}{2 b p \gamma}}\nonumber,
    \end{equation}
    where $\bz = \begin{pmatrix} \comb, \txb \end{pmatrix}$.
\end{proof}
Thm.~\ref{thm:easgd_converge} demonstrates an explicit bound on the expected deviation of both the center of mass variable $\comb$ and the quorum variable $\txb$ from the global minimizer of a strongly convex function. As in the discussion after Thm.~\ref{thm:qsgd}, the results will still hold with state- and time-dependent couplings of the form $k = k(\txb, t)$, and the same ideas suggested for QSGD based on increasing $k$ over time can be used to eliminate the effect of the first term in the bound.

Thm.~\ref{thm:easgd_converge} is strictly weaker than Thm.~\ref{thm:qsgd}. The metric transformation used adds a factor of $\sqrt{p}$ to the first quantity in the bound, and the assumption $Tr(\bB^T\bM\bB) \leq C(p)$ now depends on $p$ through the factor of $p$ in the top-left block of $\bM$. Indeed, writing the matrix $\bB$ in $n\times n$ block form, $Tr(\bB^T\bM \bB) = C + (p-1)Tr(\bB_{11}^T\bB_{11} + \bB_{12}^T\bB_{12})$ where $C = Tr(\bB^T\bB)$ as in Thm.~\ref{thm:qsgd}. Thus, the dependence of $C(p)$ on $p$ is in general linear. 

Because of this linear dependence on $p$, the first term in the bound scales like $p^{3/2}$, while the second is asymptotically independent of $p$. This is not the case in Thm.~\ref{thm:qsgd}, where the first term is asymptotically independent of $p$, and the second term scales like $\frac{1}{p}$. The unfavorable scaling of the bound in Thm.~\ref{thm:easgd_converge} with $p$ implies that higher values of $p$ do not improve convergence for EASGD as they do for QSGD. These issues can be avoided by reformulating Lemma \ref{lem:easgd_determ} in the Euclidean metric, but this leads to the fairly strong restriction $k < \frac{4 \lambda p}{(p-1)^2}$. 

These observations highlight potential convergence issues for EASGD with large $p$ which are not present with QSGD. In line with these theoretical conclusions, we will empirically find stricter stability conditions on $k$ for EASGD when compared to QSGD for training deep networks in Sec.~\ref{sec:dnn}. Nevertheless, in the context of nonconvex optimization, higher values of $p$ can still lead to improved performance by affording increased parallelization of the problem and exploration of the landscape

Less significantly, unlike in Thm.~\ref{thm:qsgd}, the bound in Thm.~\ref{thm:easgd_converge} is applied to the combined vector $\bz$ rather than the quorum variable $\txb$ itself, and the contraction rate $\gamma$ is used rather than $\lambda$ in the virtual system bounds\footnote{\normalsize The factor of $\lambda + k$ in the first term remains, as this factor originates in the derivation of the bound on $\mathbb{E}\left[\Vert\beps\Vert\right]$, where the synchronization rate is $\lambda + k$.}. Both of these facts weaken the result when compared to Thm.~\ref{thm:qsgd}. $\gamma$ will in general be less than $\lambda$, as exemplified by the lower bound (\ref{eqn:easgd_det_conv_rate}).

\subsection{QSGD with momentum convergence analysis}
We now present a proof of convergence for the QSGD algorithm with momentum. We first prove a lemma demonstrating convergence to the global minimum of a strongly convex, $\bal$-smooth function. We consider the case of coupling only in the position variables; coupling additionally through the momentum variables is similar. We also restrict to the case of constant momentum coefficient for simplicity.
\begin{lem}
  \label{lem:qsgd_mom_det_conv}
  Consider the deterministic continuous-time QSGD with momentum algorithm
  \begin{align*}
    \dot{\bx}^i_1 &= \bx^i_2 + k\left(\comb - \bx^i_1\right),\\
    \dot{\bx}^i_2 &= -\nabla f(\bx^i_1) - \mu \bx_2,
  \end{align*}
  with $\bx^i_j \in \mathbb{R}^n$ for $i = 1, \hdots, p$. Assume that $f$ is $\ubal$-strongly convex and $\bal$-smooth. For $\mu > 2 \sqrt{\ubal + \bal - 2 \sqrt{\ubal\bal}}$ and $k > \frac{1}{4\mu}\max\left((1 - \bal)^2, (1 - \ubal)^2\right)$, all agents globally exponentially converge to the unique minimum with zero velocity, $(\bx^i_1, \bx^i_2) \rightarrow (\bx^*, 0)$ for all $i$. The exponential convergence rate $\kappa$ can be lower bounded as
  \begin{equation}
      \label{eqn:qsgd_mom_contr_bound}
      \kappa \geq \frac{\delta \mu + (1-\delta)\mu}{2} - \sqrt{\left(\frac{\delta \mu - (1-\delta)\mu}{2}\right)^2 + \frac{1}{4}\left(\frac{(\bal-\ubal)^2}{2\left(\ubal+\bal+2(\delta-1)\delta\mu^2\right)}\right)}
  \end{equation}
  with $\delta = \delta(\mu) \in (0, 1)$.
\end{lem}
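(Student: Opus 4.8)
The plan is to reuse the two-stage template of Lemmas~\ref{lem:sync}, \ref{lem:easgd_determ}, and \ref{lem:qsgd_mom_sync}: first synchronize the agents by partial contraction, then analyze the reduced-order momentum flow that lives on the synchronized subspace.

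\textbf{Stage 1 (synchronization).} The position-only coupling in the statement is exactly the $k_2 = 0$, constant-$\mu$ instance of Lemma~\ref{lem:qsgd_mom_sync}. I would introduce the virtual system $\dot{\by}_1 = \by_2 + k(\comb - \by_1)$, $\dot{\by}_2 = -\nabla f(\by_1) - \mu\by_2$, which admits each $(\bx^i_1, \bx^i_2)$ as a particular solution. Its Jacobian $\bJ = \begin{pmatrix} -k\bI & \bI \\ -\nabla^2 f(\by_1) & -\mu\bI\end{pmatrix}$ is contracting by the coupled-systems criterion of \citet{Wang2005} as soon as $\mu k > \sup_{\bx}\sigma^2\!\left(\tfrac12(\bI - \nabla^2 f(\bx))\right) = \tfrac14 \max\!\left((1-\bal)^2, (1-\ubal)^2\right)$, which is precisely the hypothesis on $k$. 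Hence the agents converge to one another exponentially, and after this transient every agent obeys the reduced flow $\dot{\by}_1 = \by_2$, $\dot{\by}_2 = -\nabla f(\by_1) - \mu\by_2$.

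\textbf{Stage 2 (contraction of the reduced momentum flow).} The heart of the proof is to exhibit a constant invertible metric transformation for which $\bJ_{\mathrm{red}} = \begin{pmatrix} 0 & \bI \\ -\nabla^2 f(\by_1) & -\mu\bI\end{pmatrix}$ is contracting at the claimed rate $\kappa$. I would look for $\bThet$ whose four blocks are scalar multiples of $\bI$ (equivalently $\bThet = T\otimes\bI$ for a numerical $2\times2$ matrix $T$), so that each block commutes with $\nabla^2 f(\by_1)$; conjugating $\bJ_{\mathrm{red}}$ into an eigenbasis of the Hessian then decouples the $2n$-dimensional requirement $(\bThet\bJ_{\mathrm{red}}\bThet^{-1})_s \preceq -\kappa\bI$ into the scalar condition $(T J_\ell T^{-1})_s \preceq -\kappa I_2$ for every $J_\ell = \begin{pmatrix} 0 & 1 \\ -\ell & -\mu \end{pmatrix}$ with $\ell$ ranging over $[\ubal,\bal]$. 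Equivalently, with $\bM = T^T T = \begin{pmatrix} m_{11} & m_{12} \\ m_{12} & m_{22}\end{pmatrix} \succ 0$ one needs $J_\ell^T\bM + \bM J_\ell \preceq -2\kappa\bM$ for all such $\ell$; choosing $m_{11} - \mu m_{12} = \tfrac12(\ubal+\bal)m_{22}$ centers the off-diagonal entry of $J_\ell^T\bM + \bM J_\ell$ so that over the entire Hessian spectrum it is bounded in magnitude by $\tfrac12(\bal-\ubal)m_{22}$ — i.e.\ the \emph{spread} $\bal-\ubal$, not the individual bounds, controls it. Parametrizing the remaining scaling freedom by $\delta = \delta(\mu) \in (0,1)$ — which apportions the damping $\mu$ between the two coordinates, so that the negated, normalized symmetric part has diagonal $(\delta\mu,(1-\delta)\mu)$ — and estimating the smallest eigenvalue of this $2\times2$ matrix via Example~3.8 of \citet{slot-modular} yields exactly the bound (\ref{eqn:qsgd_mom_contr_bound}). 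Strict positivity of that lower bound together with positivity of the denominator $\ubal+\bal+2(\delta-1)\delta\mu^2$ is what forces the hypothesis $\mu > 2\sqrt{\ubal+\bal-2\sqrt{\ubal\bal}}$ and pins down the admissible range of $\delta(\mu)$; under the stated hypotheses one verifies that such a $\delta$ exists. With this metric, $(\by_1,\by_2) = (\bx^*,0)$ — with $\bx^*$ the unique minimizer guaranteed by $\ubal$-strong convexity, at which $\nabla f(\bx^*) = 0$ — is a particular solution of the reduced flow, so contraction forces every trajectory, hence every agent, to $(\bx^*,0)$ exponentially at rate $\kappa$.

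\textbf{Expected main obstacle.} Stage 1 and the particular-solution step are routine given the earlier lemmas; the delicate part is Stage 2 — arranging that a single constant $\bThet$ works uniformly over the whole Hessian eigenvalue interval $[\ubal,\bal]$, and carrying the $2\times2$ eigenvalue bookkeeping so that the off-diagonal term is governed by the spread $\bal-\ubal$ and the positivity requirement collapses exactly to $\mu > 2\sqrt{\ubal+\bal-2\sqrt{\ubal\bal}}$ with the correct $\delta$-dependence in (\ref{eqn:qsgd_mom_contr_bound}).
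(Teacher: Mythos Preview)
Your two-stage plan---synchronize via Lemma~\ref{lem:qsgd_mom_sync}, then prove contraction of the reduced momentum flow in a $T\otimes\bI$ metric whose off-diagonal is governed by the Hessian spread $\bal-\ubal$---is exactly the paper's route. The paper makes Stage~2 concrete by taking the explicit lower-triangular $\bThet = \begin{pmatrix} a\bI & 0 \\ \delta\mu\bI & \bI \end{pmatrix}$ and computing $(\bThet\bJ\bThet^{-1})_s$ directly: this is what produces the diagonal $(-\delta\mu,-(1-\delta)\mu)$, and the choice $a^2 = \tfrac12\bigl(\ubal+\bal+2(\delta-1)\delta\mu^2\bigr)$ equalizes the off-diagonal at the two endpoints $\ell=\ubal,\bal$, after which the condition on $\mu$ and the existence of $\delta(\mu)$ drop out as you anticipate.

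One small caution on your bookkeeping: the centering condition you write, $m_{11}-\mu m_{12} = \tfrac12(\ubal+\bal)m_{22}$, zeroes the off-diagonal of $J_\ell^T\bM+\bM J_\ell$, which is \emph{not} the same as zeroing the off-diagonal of $(\bThet\bJ\bThet^{-1})_s$---the two matrices are related by a congruence, not a similarity, so their off-diagonals do not vanish together. The bound~(\ref{eqn:qsgd_mom_contr_bound}) (and the Example~3.8 eigenvalue estimate you cite) applies to the generalized Jacobian, whose diagonal is $\ell$-independent; by contrast the $(1,1)$ entry of $J_\ell^T\bM+\bM J_\ell$ is $-2\ell m_{12}$ and varies with $\ell$. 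If you stay in the generalized-Jacobian picture throughout, everything lines up with the paper.
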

\begin{proof}
    By Lemma \ref{lem:qsgd_mom_sync} and according to the assumption on $k$, the agents will globally exponentially synchronize with rate $\xi$, where $\xi$ may be lower bounded as in (\ref{eqn:qsgd_mom_sync_bound}). On the synchronized subspace, the overall system can be described by the virtual system
    \begin{align*}
        \dot{\bx}_1 &= \bx_2,\\
        \dot{\bx}_2 &= -\nabla f(\bx_1) - \mu \bx_2,
    \end{align*}
    where the superscript has been omitted and the coupling term vanishes. Note that this system admits the particular solution $(\bx_1, \bx_2) = (\bx^*, 0)$. This system has Jacobian
    \begin{equation*}
        \bJ = \begin{pmatrix} 0 & \bI \\ -\nabla^2f & -\mu \bI \end{pmatrix},
    \end{equation*}
    which is clearly not contracting. Define the metric transformation $\bThet = \begin{pmatrix} a \bI & 0 \\ \delta \mu \bI & \bI \end{pmatrix}$ with $0 < \delta < 1$ and $a \in \mathbb{R}$. The resulting symmetric part of the generalized Jacobian is given by
    \begin{equation*}
        \left(\bThet \bJ \bThet^{-1}\right)_s = \begin{pmatrix} -\delta \mu\bI & \frac{1}{2}\left(a \bI - \frac{1}{a}\nabla^2f - \frac{(\delta - 1)\delta}{a} \mu^2 \bI\right) \\ \frac{1}{2}\left(a \bI - \frac{1}{a}\nabla^2f - \frac{(\delta - 1)\delta}{a} \mu^2 \bI\right) & (\delta-1)\mu \end{pmatrix}.
    \end{equation*}
    For contraction, we require that
    \begin{equation*}
        \delta(1-\delta)\mu^2 > \frac{1}{4}\max\left(\left(a - \frac{\delta(\delta-1)}{a}\mu^2 - \frac{\ubal}{a}\right)^2, \left(a-\frac{\delta(\delta-1)}{a}\mu^2 - \frac{\bal}{a}\right)^2\right).
    \end{equation*}
    Choosing 
    \begin{equation}
        a = \sqrt{\frac{1}{2}\left(\ubal + \bal + 2(\delta-1)\delta\mu^2\right)}
        \label{eqn:qsgd_mom_a}
    \end{equation} 
    ensures that the two arguments of the max are equal. For $a$ to be real, we require that $\mu < \sqrt{\frac{\ubal+\bal}{2(1-\delta)\delta}}$. The condition for contraction then reads that
    \begin{equation*}
        4\delta(1-\delta)\mu^2 > \left(\frac{(\bal-\ubal)^2}{2\left(\ubal+\bal+2(\delta-1)\delta\mu^2\right)}\right)
    \end{equation*}
    leading to the condition on $\mu$,
    \begin{equation*}
        \frac{1}{2}\sqrt{\frac{\ubal + \bal - 2 \sqrt{\ubal\bal}}{\delta(1-\delta)}} < \mu < \min\left(\frac{1}{2}\sqrt{\frac{\ubal + \bal + 2 \sqrt{\ubal\bal}}{\delta(1-\delta)}}, \sqrt{\frac{\ubal + \bal}{2(1-\delta)\delta}}\right).
    \end{equation*}
    The lower bound is always real and positive by the arithmetic-geometric mean inequality. There is always a gap between the lower and upper bound, regardless of which argument of the $\min$ is chosen in the upper bound. The lower bound is minimized for $\delta=\frac{1}{2}$, leading to the condition that $\mu > 2 \sqrt{\ubal + \bal - 2\sqrt{\bal\ubal}}$. With $\mu$ satisfying this minimal lower bound, the valid range of $\mu$ can be shifted arbitrarily large by choosing
    \begin{equation*}
        \delta(\mu) = \left(\frac{\sqrt{\mu ^2+ 4\left(2 \sqrt{\bar{\lambda} \underline{\lambda}} - (\bar{\lambda} + \underline{\lambda})\right)} + \mu}{2 \mu } - \alpha\right) \in (0, 1)
    \end{equation*}
    with $\alpha > 0$ an arbitrarily small positive constant, thus eliminating the upper bound. The lower bound on the contraction rate $\kappa$ of the system can be obtained by application of the result in \citet{slot-modular}, Example 3.8.
\end{proof}
Note that in general, so long as $\mu$ is chosen to satisfy the lower bound of the preceding lemma, the QSGD with momentum system will be contracting in \emph{some} metric. The given metric will depend on the value of $\delta(\mu)$, for example chosen as suggested in the proof.

With Lemma \ref{lem:qsgd_mom_det_conv} in hand, we can now state a convergence result for QSGD with momentum.
\begin{thm}
    Consider the continuous-time QSGD with momentum algorithm,
    \begin{align*}
        d\bx_1^i &= \left(\bx_2^i + k(\comb - \bx_1^i)\right)dt,\\
        d\bx_2^i &= \left(-\nabla f(\bx^i_1) - \mu \bx_2\right)dt + \sqrt{\frac{\eta}{b}}\bB(\bx_1^i)d\bW^i,
    \end{align*}
    for $i = 1, \hdots, p$. Assume that the conditions of Lemma \ref{lem:qsgd_mom_det_conv} are satisfied, and that the conditions of Assumption \ref{asmpt:hess} are met. Let $\kappa$ denote the contraction rate of the deterministic fully synchronized QSGD with momentum system as lower bounded in (\ref{eqn:qsgd_mom_contr_bound}) and let $\xi$ denote the synchronization rate of the QSGD with momentum system as lower bounded in (\ref{eqn:qsgd_mom_sync_bound}). Further assume that $Tr(\bB^T \bM \bB)\leq C$ with $C > 0$ where $\bM = \bThet^T \bThet$ and $\bThet$ is the metric transformation from Lemma \ref{lem:qsgd_mom_det_conv}. Let $\psi = \frac{a + \delta^2 \mu^2 - 1 - \sqrt{(a - 1)^2 + 2 (a + 1) \delta^2 \mu^2 + \delta^4 \mu^4}}{2 \delta \mu}$ denote the minimum eigenvalue of $\bM$ with $a$ given by (\ref{eqn:qsgd_mom_a}). Then, after exponential transients of rate $\kappa$ and $\xi$, with $\bz = \begin{pmatrix} \comb_1, \comb_2 \end{pmatrix}$ and $\bz^* = \begin{pmatrix} \bx^*, 0 \end{pmatrix}$
    \begin{equation}
        \Exp\left[\Vert \bz - \bz^* \Vert \right] \leq \frac{Q a (p-1)C \sqrt{n} \eta}{4 b p \kappa \xi} + \sqrt{\frac{\eta C}{2 b p \psi \kappa}}
        \label{eqn:qsgd_conv_bound}
    \end{equation}
\end{thm}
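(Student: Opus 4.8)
The plan is to run the three-virtual-system argument used in the proofs of Thm.~\ref{thm:qsgd} and Thm.~\ref{thm:easgd_converge}, now in the position--velocity state space and in the metric $\bM = \bThet^T\bThet$ supplied by Lemma \ref{lem:qsgd_mom_det_conv}. First I would sum the agent SDEs over $i$ and divide by $p$: the coupling terms cancel in the position line, giving $\dot{\comb}_1 = \comb_2$, while the velocity line becomes $d\comb_2 = \left(-\nabla f(\comb_1) + \beps - \mu\comb_2\right)dt + \sqrt{\tfrac{\eta}{bp^2}}\,\bT\,d\bW$ with $\beps = \nabla f(\comb_1) - \tfrac1p\sum_i \nabla f(\bx_1^i)$ and $\bT\bT^T = \sum_i\bSig(\bx_1^i)$, exactly as in Sec.~\ref{sec:sync_noise}. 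The key structural observation is that both the distortion $\beps$ and the (already $1/\sqrt p$--reduced) Wiener term enter only the velocity block.

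Next I set up the hierarchy. Let $\by^1$ be the noise-free synchronized momentum flow $\dot{\by}^1_1 = \by^1_2$, $\dot{\by}^1_2 = -\nabla f(\by^1_1) - \mu\by^1_2$, which by Lemma \ref{lem:qsgd_mom_det_conv} is contracting in $\bM$ with rate $\kappa$ and admits $\bz^* = (\bx^*,0)$ as a particular solution; let $\by^2$ be $\by^1$ with $\beps$ added in the velocity equation (same Jacobian, hence still $\kappa$-contracting in $\bM$ for any input); and let $\by^3$ be $\by^2$ with the noise term restored, so that $\bz = (\comb_1,\comb_2)$ is a particular solution of $\by^3$. For the distortion, Lemma \ref{lem:qsgd_mom_sync} replaces Thm.~\ref{thm:sync} and gives global exponential synchronization of the agents at rate $\xi$; running the argument of Lemma \ref{lem:sync_nomom} in the synchronization metric yields $\Exp[\sum_i \Vert\bx^i_1 - \comb_1\Vert^2] \le \tfrac{(p-1)C\eta}{2b\xi}$ after transients of rate $2\xi$, and the integral-remainder Taylor estimate of Thm.~\ref{thm:sync_noise} under Assumption \ref{asmpt:hess} then gives $\Exp[\Vert\beps\Vert] \le \tfrac{Q(p-1)C\sqrt n\,\eta}{4p\xi b}$. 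Comparing $\by^1$ and $\by^2$ as in Thm.~\ref{thm:qsgd}: with $R = \Vert\bThet(\by^2-\bz^*)\Vert$ one has $\dot R + \kappa R \le \Vert\bThet\,(0,\beps)\Vert \le \sqrt{\Psi}\,\Vert\beps\Vert$, so by dominated convergence $\Exp[R] \le \sqrt{\Psi}\,\Exp[\Vert\beps\Vert]/\kappa$ asymptotically, and multiplying by $\Vert\bThet^{-1}\Vert = 1/\sqrt\psi$ gives $\Exp[\Vert\bz^2 - \bz^*\Vert] \le \tfrac{Q\sqrt\Psi\,(p-1)C\sqrt n\,\eta}{\sqrt\psi\,4bp\kappa\xi}$, the first term.

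For the averaged-noise term I apply Cor.~\ref{cor:stoch_det} to $(\by^2,\by^3)$ in $\bM$. Embedding the diffusion as $\bsig = \begin{pmatrix} 0 \\ \sqrt{\eta/(bp^2)}\,\bT \end{pmatrix}$ gives $Tr(\bsig^T\bM\bsig) = \tfrac{\eta}{bp^2}\sum_i Tr\big(\bB(\bx^i_1)^T\bM\bB(\bx^i_1)\big) \le \tfrac{\eta C}{bp}$ by the hypothesis, and $\bx^T\bM\bx \ge \psi\Vert\bx\Vert^2$ lets me take $\beta = \psi$, so $\Exp[\Vert\bz^3-\bz^2\Vert^2] \le \tfrac{\eta C}{2bp\psi\kappa}$ after transients of rate $\kappa$; Jensen's inequality turns this into $\Exp[\Vert\bz^3-\bz^2\Vert]\le\sqrt{\eta C/(2bp\psi\kappa)}$. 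Adding the two bounds via the triangle inequality, and noting that $\bz$ solves the $\by^3$ system, yields the claimed bound (\ref{eqn:qsgd_conv_bound}).

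The hard part is the metric bookkeeping. The synchronization rate $\xi$ comes from the metric of Lemma \ref{lem:qsgd_mom_sync}, whereas $\kappa$, $\psi$, $\Psi$ and the trace constant $C$ refer to the different metric of Lemma \ref{lem:qsgd_mom_det_conv}, so one must check that the constant governing $\Exp[\Vert\beps\Vert]$ is compatible with the assumed bound $Tr(\bB^T\bM\bB)\le C$, and --- because $\beps$ and the noise occupy only the velocity block --- that the prefactors emerge as $\sqrt\Psi$ (from $\Vert\bThet\Vert$ acting on the perturbation, equivalently $\beta=\psi$ in the noise bound) and $1/\sqrt\psi$ (from $\Vert\bThet^{-1}\Vert$), not their squares, exactly as in the displayed bound. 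The rest is a routine transcription of the QSGD and EASGD convergence proofs.
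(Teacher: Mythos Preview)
Your proposal is correct and follows essentially the same three-virtual-system argument as the paper, with the same use of Lemma~\ref{lem:qsgd_mom_det_conv} for the $\kappa$-contraction in $\bM$, Lemma~\ref{lem:qsgd_mom_sync} for the synchronization rate $\xi$, the Taylor-remainder bound on $\Exp[\Vert\beps\Vert]$, Lemma~\ref{lem:pert} with the condition number $\sqrt{\Psi/\psi}$ for the first term, and Cor.~\ref{cor:stoch_det} with $\beta=\psi$ for the second. Your concern about metric compatibility for the constant $C$ is well-placed but resolves cleanly here: since $\bThet=\begin{pmatrix} aI & 0\\ \delta\mu I & I\end{pmatrix}$ has bottom-right block $I$, the embedded trace $Tr(\bsig^T\bM\bsig)$ with $\bsig=\begin{pmatrix}0\\\bB\end{pmatrix}$ equals $Tr(\bB^T\bB)$, so the assumed bound coincides with the Euclidean trace bound needed for the synchronization estimate.
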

\begin{proof}
    Summing the agent dynamics, the center of mass trajectory follows
    \begin{align*}
        d\comb_1 &= \comb_2 dt\\
        d\comb_2 &= \left(-\nabla f(\comb_1) + \beps - \mu \comb_2\right)dt + \sqrt{\frac{\eta}{bp^2}}\bT(\bx^1, \hdots, \bx^p)d\bW
    \end{align*}
    with the usual definition of $\beps$ and $\bT$. Consider an analogous hierarchy of virtual systems as in Thms.~\ref{thm:qsgd} and \ref{thm:easgd_converge},
    \begin{align*}
        \dot{\by}_1^1 &= \by_2^1,\\
        \dot{\by}_2^1 &= -\nabla f(\by_1^1) - \mu \by_2^1,\\\\
        \dot{\by}_1^2 &= \by_2^2,\\
        \dot{\by}_2^2 &= -\nabla f(\by_1^2) - \mu \by_2^2 + \beps(\bx_1^1, \hdots, \bx_1^p),\\\\
        d\by_1^3 &= \by_2^3 dt,\\
        d\by_2^3 &= \left(-\nabla f(\by_1^3) + \beps(\bx_1^1, \hdots, \bx_1^p) - \mu \by_2^3\right)dt + \sqrt{\frac{\eta}{bp^2}}\bT(\bx^1, \hdots, \bx^p)d\bW.
    \end{align*}
    The first system is contracting towards the global minimum with zero velocity and will arrive after exponential transients of rate $\kappa$ by the assumptions of the theorem and by Lemma \ref{lem:qsgd_mom_det_conv}. The second system is contracting for any external input $\beps$, and as argued in Sec.~\ref{sec:sync_noise}, the bound on $\Exp\left[\Vert \beps \Vert\right]$ can be applied as-is to the momentum system with a suitable replacement of contraction rates. As in Thm.~\ref{thm:qsgd}, and noting that the condition number of $\bThet$ is $a$ as given in (\ref{eqn:qsgd_mom_a}),
    \begin{equation*}
        \Exp\left[\Vert \bz^2 - \bz^*\Vert\right] \leq \frac{(p-1) C \eta \sqrt{n} Q a}{4 p \xi \kappa b}
    \end{equation*}
    after exponential transients of rate $\kappa$ and $\xi$. Similarly, an application of Cor.~\ref{cor:stoch_det} gives
    \begin{equation*}
        \Exp\left[\Vert \bz^3 - \bz^2\Vert\right] \leq \sqrt{\frac{\eta C}{2 b p \psi \kappa}}
    \end{equation*}
    after exponential transients of rate $\kappa$, where we have noted that $\bx^T \bM \bx \geq \psi \Vert \bx \Vert^2$. An application of the triangle inequality leads to the result.
\end{proof}
(\ref{eqn:qsgd_conv_bound}) is similar to the results for EASGD and QSGD. The bound is closer in spirit to the bound for QSGD without momentum, in that the two terms do not have poor dependencies on $p$ as they do for EASGD. However, the statement of the theorem is complicated by the expressions for the contraction rates $\kappa$ and $\xi$, the expression for the minimum eigenvalue of the metric $\psi$, and the expression for $a$ in the metric transformation. Together, these four quantities create a more complicated dependence of the bound on hyperparameters such as $\mu$ and $k$. Nevertheless, the spirit is still the same as Thm.~\ref{thm:qsgd}, in that the first term originates from the $\beps$ disturbance and can be eliminated with synchronization, while the second term originates from the additive noise and can be eliminated by including additional agents.

\subsection{Extensions to other distributed structures}
Similar results can be derived for many other possible distributed structures in an identical manner. We present one general formalism here, involving local state- and time-dependent couplings.
\begin{lem}
The state-dependent all-to-all coupled system
\begin{equation}
    \dot{\bx}^i = - \nabla f (\bx^i) + \sum_j k_j(\bx^j, t) (\bx^j - \bx^i) \ \ \ \ \ \ \ \ \ \ \ \ \ \ \ \ \ \ \ \ \ i = 1, \hdots, p,
     \label{eqn:new_alg}
\end{equation}
will globally exponentially synchronize with rate
\begin{equation}
    \inf_{\bx^1, \hdots, \bx^p, t}\left\{\sum_j k_j(\bx^j, t)\right\} \ - \   \sup_{\by}\left\{\lambda_{\max}\left(-\nabla^2 f(\by)\right)\right\}
    \label{eqn:sd_qsgd_sync}
\end{equation}
whenever this value is positive.
\end{lem}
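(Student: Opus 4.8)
The plan is to follow the partial-contraction strategy used in the proof of Thm.~\ref{thm:sync}: exhibit a single \emph{virtual system} that admits each agent trajectory $\bx^i(t)$ as a particular solution and whose Jacobian is uniformly negative definite in the Euclidean metric. Then Thm.~\ref{thm:part_cont}, together with the incremental-stability estimate~(\ref{eqn:contr_forget}), forces all agents to converge to one another exponentially at the corresponding rate.

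Concretely, I would regard the $p$ agent trajectories $\bx^1(t), \hdots, \bx^p(t)$ generated by~(\ref{eqn:new_alg}) as known external signals and consider the virtual system
\begin{equation*}
    \dot{\by} = -\nabla f(\by) + \sum_{j} k_j(\bx^j, t)\left(\bx^j - \by\right).
\end{equation*}
Setting $\by = \bx^i$ recovers~(\ref{eqn:new_alg}) exactly, for every $i$, so all $p$ agents are particular solutions of this one system. The crucial observation is that the gains $k_j$ depend on $\bx^j$ and $t$ only, not on the virtual state $\by$, so the term $\sum_j k_j(\bx^j,t)\bx^j$ is an exogenous, state-independent input; only $-\big(\sum_j k_j(\bx^j,t)\big)\by$ contributes to the Jacobian with respect to $\by$. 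Hence
\begin{equation*}
    \bJ = -\nabla^2 f(\by) - \Big(\textstyle\sum_j k_j(\bx^j, t)\Big)\bI,
\end{equation*}
which is symmetric, with largest eigenvalue bounded above by $\sup_{\by}\lambda_{\max}\!\left(-\nabla^2 f(\by)\right) - \inf_{\bx^1,\hdots,\bx^p,t}\{\sum_j k_j(\bx^j,t)\}$, uniformly in $\by$ and in the external signals. Whenever the quantity in~(\ref{eqn:sd_qsgd_sync}) is positive, this bound equals minus that quantity, so the virtual system is contracting in the identity metric with rate~(\ref{eqn:sd_qsgd_sync}); by~(\ref{eqn:contr_forget}) applied to the pair of solutions $\bx^i, \bx^j$, $\Vert \bx^i(t) - \bx^j(t)\Vert \to 0$ exponentially at that rate for all $i,j$, which is the asserted global exponential synchronization.

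There is essentially no hard step here — the argument is a direct transcription of Thm.~\ref{thm:sync}. The only point I would flag explicitly in the write-up is the separation of the $\bx^j$-dependence of the coupling from the $\by$-dependence: the state-dependent gains $k_j(\bx^j,t)$ do \emph{not} enter the virtual Jacobian, in contrast to the situation in the footnote after Thm.~\ref{thm:part_cont}, where a state-dependent matrix multiplying the virtual state itself would have produced extra terms of indefinite sign. It is also worth remarking that the individual $k_j$ need not be nonnegative; only $\inf_{\bx^1,\hdots,\bx^p,t}\{\sum_j k_j(\bx^j,t)\}$ must exceed $\sup_{\by}\lambda_{\max}(-\nabla^2 f(\by))$, which is exactly the stated hypothesis.
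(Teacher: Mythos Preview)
Your proposal is correct and follows essentially the same approach as the paper: the paper rewrites (\ref{eqn:new_alg}) as $\dot{\bx}^i = -\nabla f(\bx^i) - \big(\sum_j k_j(\bx^j,t)\big)\bx^i + \sum_j k_j(\bx^j,t)\bx^j$ and then introduces precisely your virtual system $\dot{\by} = -\nabla f(\by) - \big(\sum_j k_j(\bx^j,t)\big)\by + \sum_j k_j(\bx^j,t)\bx^j$, concluding synchronization from contraction of that system. Your write-up is in fact more explicit than the paper's about the Jacobian computation and about why the $\bx^j$-dependent gains do not contribute to it.
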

\begin{proof}
The weighted sum $\sum_j k_j(\bx^j, t) \bx^j$ now plays the role of the quorum variable, so that one has
\begin{equation}
    \dot{\bx}^i = - \nabla f (\bx^i) - \sum_j k_j(\bx^j, t) \ \bx^i + \sum_j k_j(\bx^j, t) \bx^j \ \ \ \ \ \ \ \ \ \ \ \ \ \ \ i = 1, \hdots, p.
    \label{eqn:new_alg_quorum}
\end{equation}
The virtual system
\begin{equation}
    \dot{\by} = -\nabla f(\by) - \sum_j k_j(\bx^j, t) \ \by + \sum_j k_j(\bx^j, t) \bx^j
    \nonumber
\end{equation}
shows that the individual $\bx^i$ trajectories globally exponentially synchronize if the conditions of the theorem are met.
\end{proof}
We note that the condition (\ref{eqn:sd_qsgd_sync}) is independent of the number of agents. With noise, the center of mass of (\ref{eqn:new_alg}) satisfies
\begin{equation}
    d\comb = (-\nabla f(\comb) + \beps)dt + \sqrt{\frac{\eta}{p b}}\bB d\bW,
    \nonumber
\end{equation}
where now $\beps = \nabla f(\comb) - \frac{1}{p}\sum_i \nabla f(\bx^i) + \sum_j k_j(\bx^j, t)\bx^j - \comb \sum_j k_j(\bx^j, t)$. As usual, $\beps \rightarrow 0$ in the fully synchronized state.

Individually state-dependent couplings of the form (\ref{eqn:new_alg}) or its quorum-mediated equivalent (\ref{eqn:new_alg_quorum}) allow for individual gain schedules that depend on local cost values or other local performance measures. This can allow each agent to broadcast its current measure of success and shape the quorum variable accordingly. For example, the classification accuracy on a validation set for each $\bx^i$ could be use to select the current best parameter vectors, and to increase the corresponding $k_i$ values to pull other agents towards them.

\subsection{Specialization to a multivariate quadratic objective}
\label{ssec:quad}
In the original discrete-time analysis of EASGD in \citet{EASGD}, it was proven that iterate averaging \citep{pol-jud} of $\txb$ leads to an optimal variance around the minimum of a quadratic objective. We now derive an identical result in continuous-time for the QSGD algorithm, demonstrating that this optimality is independent of the additional dynamics in the EASGD algorithm.

For a multivariate quadratic $f(\bx) = \bx^T \bA \bx$ with $\bA$ symmetric and positive definite, the stochastic dynamics of each agent can be written
\begin{equation}
    d\bx^i = \left(-\bA \bx^i + k \left(\comb - \bx^i\right)\right)dt + \bB d\bW^i,
    \nonumber
\end{equation}
To make the optimal result more clear, we group the factor of $\sqrt{\frac{\eta}{b}}$ into the definition of $\bB$, unlike in previous sections. We furthermore relax the state-dependence of $\bB$ in this section, and assume it to be a constant matrix; this matches the case handled in \citet{EASGD}. 

The assumption of state-independence can be justified in several ways. Theoretical analyses have demonstrated that the specific form of positive semi-definite $\bB$ does not affect the $\mathcal{O}(\eta)$ weak accuracy of the approximating SDE (\ref{eqn:sgd_cont_typical}) for SGD \citep{Li2018, Feng2018, Hu2017a}, though it does affect the constant\footnote{\normalsize The state-dependent version used earlier in this work has been empirically shown to have a lower constant \citep{Li2018}, and is closer to the $\mathcal{O}(\eta^2)$ approximating SDE, which is why it has been utilized up to this point.}. For relevance to general nonconvex optimization, we can assume that all agents have arrived sufficiently close to a minimum of the loss function that it can be approximately represented as a quadratic, and that the noise covariance is approximately constant \citep{Mandt2016, Mandt2017}. For deep networks, the noise covariance has been empirically shown to align with the Hessian of the loss \citep{Sagun2017, Zhu2018}, with theoretical justification for when this is valid provided in Appendix A of \citet{3fac}. For all agents in an approximately quadratic basin of a local minimum of a deep network, $\bB$ can then be taken to be constant such that $\bB\bB^T = \bA$, where $\bA$ is the approximately state-independent Hessian.

With this assumption, $\comb$ satisfies
\begin{equation}
    d\comb_t = -\bA \comb_t dt + \frac{1}{\sqrt{p}}\bB d\bW.
    \nonumber
\end{equation}
This is a multivariate Ornstein-Uhlenbeck process with solution
\begin{equation}
    \comb(t) = e^{-\bA t}\comb(0) + \frac{1}{\sqrt{p}}\int_0^t e^{-\bA(t-s)}\bB d\bW_s.
    \label{eqn:OU_sol}
\end{equation}
By assumption, $-\bA$ is negative definite, so that the stationary expectation $\tlim \Exp[\comb(t)] = 0$. The stationary variance $\mathbf{V}$ is given by
\begin{equation}
    \bA \mathbf{V} + \mathbf{V} \bA^T = \frac{1}{p}\bSig
    \nonumber
\end{equation}
(see, for example, \citet{Gardiner}, p.107). We now define
\begin{equation}
    \bz(t) = \frac{1}{t}\int_0^t \comb(t')dt',
    \nonumber
\end{equation}
and can immediately state the following lemma.
\begin{lem}
  The averaged variable $\bz(t)$ converges weakly to a normal distribution with mean zero and standard deviation $\frac{1}{p}\bA^{-1}\bSig\bA^{-T}$,
    \begin{equation}
        \lim_{t \rightarrow \infty }\sqrt{t}\left(\bz(t) - \bx^*\right) \rightarrow \mathcal{N}\left(0, \frac{1}{p}\bA^{-1}\bSig\bA^{-T}\right).
        \nonumber
    \end{equation}
    In particular, for the single-variable case with $\bA = h$ and $\bSig = \sigma^2$,  
    \begin{equation}
        \lim_{t \rightarrow \infty }\sqrt{t}\left(z(t) - x^*\right) \rightarrow \mathcal{N}\left(0, \frac{\sigma^2}{p h^2}\right).
        \nonumber
    \end{equation}
    \label{lem:avg}
\end{lem}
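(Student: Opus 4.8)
\textbf{Proof proposal for Lemma \ref{lem:avg}.}

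The plan is to work directly from the closed-form solution (\ref{eqn:OU_sol}), noting first that $\bx^* = \mathbf{0}$ is the minimizer of $f(\bx) = \bx^T\bA\bx$, so that the claim concerns the asymptotics of $\sqrt{t}\,\bz(t)$. Integrating (\ref{eqn:OU_sol}) over $[0,t]$ splits $\int_0^t \comb(t')\,dt'$ into a deterministic transient $\int_0^t e^{-\bA t'}\comb(0)\,dt' = \bA^{-1}(\bI - e^{-\bA t})\comb(0)$ and a stochastic double integral. First I would apply the stochastic Fubini theorem to the double integral, swapping the order of integration; since the integrand $e^{-\bA(t'-s)}\bB$ is deterministic and bounded on the triangle $\{0 \le s \le t' \le t\}$, the hypotheses hold, and carrying out the inner $dt'$ integral gives
\begin{equation}
\int_0^t \comb(t')\,dt' = \bA^{-1}\left(\bI - e^{-\bA t}\right)\comb(0) + \frac{1}{\sqrt{p}}\int_0^t \bA^{-1}\left(\bI - e^{-\bA(t-s)}\right)\bB\,d\bW_s.\nonumber
\end{equation}

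Next I would divide by $t$, multiply by $\sqrt{t}$, and expand the $\bI - e^{-\bA(t-s)}$ factor to write $\sqrt{t}\,\bz(t)$ as a sum of three pieces: (i) $\tfrac{1}{\sqrt{t}}\bA^{-1}(\bI - e^{-\bA t})\comb(0)$; (ii) $\bA^{-1}\bB\bigl(\tfrac{\bW_t}{\sqrt{tp}}\bigr)$; and (iii) $-\tfrac{1}{\sqrt{tp}}\int_0^t \bA^{-1}e^{-\bA(t-s)}\bB\,d\bW_s$. Piece (ii) is, for every finite $t$, exactly distributed as $\mathcal{N}\!\left(0, \tfrac{1}{p}\bA^{-1}\bB\bB^T\bA^{-T}\right) = \mathcal{N}\!\left(0, \tfrac{1}{p}\bA^{-1}\bSig\bA^{-T}\right)$, using $\bB\bB^T = \bSig$; this is the claimed limiting law. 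Piece (i) is deterministic of norm $O(1/\sqrt{t})$ and hence tends to $\mathbf{0}$. For piece (iii) I would invoke the It\^o isometry: its second moment equals $\tfrac{1}{tp}\int_0^t Tr\!\left(\bA^{-1}e^{-\bA u}\bSig\,e^{-\bA^T u}\bA^{-T}\right)du$ after the substitution $u = t-s$, and since $-\bA$ is negative definite the integrand decays exponentially in $u$, so the integral is bounded uniformly in $t$ and the whole expression is $O(1/t) \to 0$; thus piece (iii) $\to \mathbf{0}$ in $L^2$, hence in probability.

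Finally I would assemble these via Slutsky's theorem: $\sqrt{t}\,\bz(t)$ is the sum of a term with a fixed Gaussian law and two terms converging to zero in probability, so $\sqrt{t}\left(\bz(t) - \bx^*\right) \Rightarrow \mathcal{N}\!\left(0, \tfrac{1}{p}\bA^{-1}\bSig\bA^{-T}\right)$; the one-dimensional specialization follows by setting $\bA = h$, $\bSig = \sigma^2$. I expect the only genuine subtlety to be the justification of the stochastic Fubini interchange and, relatedly, bookkeeping of the mode of convergence --- the clean observation being that the dominant term (ii) is not merely asymptotically Gaussian but exactly Gaussian for all $t$ (indeed $\sqrt{t}\,\bz(t)$ is itself Gaussian for each $t$, being a linear functional of $\bW$), so no martingale central limit theorem is needed; what remains is the routine It\^o-isometry estimate showing the exponentially-weighted remainder washes out under the $1/t$ averaging.
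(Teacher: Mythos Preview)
Your proposal is correct and follows essentially the same route as the paper: integrate the explicit Ornstein--Uhlenbeck solution (\ref{eqn:OU_sol}), apply stochastic Fubini to the resulting double integral, and use the It\^o isometry to control the variance. The only cosmetic difference is that the paper keeps the stochastic integral $\int_0^t \bA^{-1}(\bI - e^{-\bA(t-s)})\bB\,d\bW_s$ intact and computes its full covariance (showing the exponential cross terms vanish in the limit), whereas you split it into the exactly-Gaussian piece $\bA^{-1}\bB\bW_t/\sqrt{tp}$ plus an $L^2$-negligible remainder and invoke Slutsky; both arrive at the same conclusion by the same mechanism.
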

\begin{proof}
    From (\ref{eqn:OU_sol}),
    \begin{equation}
        \sqrt{t}\bz(t) = \frac{1}{\sqrt{t}}\left(\bA^{-1}\left(1 - e^{-\bA t}\right)\comb(0)\right) + \frac{1}{\sqrt{t p}}\int_0^t dt' \int_0^{t'} e^{-\bA (t'-s)}\bB d\bW_s.
        \nonumber
    \end{equation}
    The mean of which is asymptotically zero. In computing the variance, only the stochastic integral remains. Interchanging the order of integration,
    \begin{align}
        \int_0^t \int_0^{t'} e^{-\bA (t'-s)}\bB d\bW_sdt' &= \int_0^t \int_s^t e^{-\bA (t'-s)}dt' \bB d\bW_s\nonumber,\\
        &= \bA^{-1}\int_0^t \left(1 - e^{-\bA (t -s)}\right) \bB d\bW_s\nonumber.
    \end{align}
    After an application of Ito's Isometry, the variance is given by
    \begin{equation}
        \mathbb{V}\left[\sqrt{t} \bz(t)\right] = \frac{\bA^{-1}}{t p} \left(\int_0^t \left(\bSig - e^{-\bA (t-s)}\bSig - \bSig e^{-\bA^T (t-s)} + e^{-\bA (t-s)}\bSig e^{-\bA^T (t-s)}\right)ds\right)\bA^{-T}\nonumber.
    \end{equation}
    In the limit, the only nonvanishing quantity after the computation of the integral is the linear term $\bSig t$. Then,
    \begin{equation}
        \tlim \mathbb{V}\left[\sqrt{t} \bz(t)\right] = \frac{1}{p}\bA^{-1} \bSig \bA^{-T}.
        \label{eqn:opt_var}
    \end{equation}
\end{proof}

As in the discrete-time EASGD analysis, (\ref{eqn:opt_var}) is optimal in the sense of achieving the Fisher information lower bound, and is independent of the coupling strength $k$ \citep{EASGD, pol-jud}. The lack of dependence on the coupling $k$ is less surprising in this case, as it is not present in the $\comb$ dynamics. The optimality of this result, together with the comparison of Thms.~\ref{thm:qsgd} and \ref{thm:easgd_converge}, suggests that the extra $\txb$ dynamics may not provide any benefit over coupling simply through the spatial average variable $\comb$ from the perspective of convex optimization. However, in Sec.~\ref{sec:dnn}, we will show through numerical experiments on deep networks that EASGD tends to find networks which generalize better than QSGD. The benefits of EASGD must then go beyond basic optimization, and the extra dynamics may have a regularizing effect.

We can also make a slightly stronger statement about (\ref{eqn:opt_var}), as in \citet{Mandt2017}\footnote{\normalsize A similar continuous-time analysis for the averaging scheme considered here was performed in \citet{Mandt2017} for the non-distributed case; the derivation here is simpler and provides asymptotic results.}. If we precondition the stochastic gradients for each agent by the \emph{same} constant invertible matrix $\bQ$, then the stationary variance remains optimal. To see this, note that we can account for this preconditioning simply by modifying the derivation so that $\bA \rightarrow \bQ \bA$ and $\bB \rightarrow \bQ \bB$. Then,
\begin{align}
    \tlim \mathbb{V}\left[\sqrt{t} \bz(t)\right] &= \frac{1}{p}(\bQ\bA)^{-1} \left(\bQ \bB\right)\left(\bQ \bB\right)^T \left(\bQ\bA\right)^{-T}\nonumber,\\
    &= \frac{1}{p}\bA^{-1}\bQ^{-1} \bQ \bB \bB^T \bQ^T \bQ^{-T}\bA^{-T}\nonumber,\\
    &= \frac{1}{p}\bA^{-1}\bSig \bA^{-T}\nonumber.
\end{align}
If different agents are preconditioned by different matrices $\bQ^i$, this result will not hold. Using adaptive algorithms based on past iterations for each agent such as AdaGrad \citep{adagrad} thus may eliminate the optimality, as each agent would compute a different preconditioner.

\section{Deep network simulations}
\label{sec:dnn}
We now turn to evaluate EASGD, QSGD, and one possible state-dependent variant of QSGD (\ref{eqn:new_alg_quorum}) as learning algorithms for training deep neural networks on the CIFAR-10 dataset. A significant goal of the section is to understand the role of synchronization and noise in training deep neural networks. We also seek to test the extensions proposed throughout the paper -- such as multiple learning rates, synchronization bounds allowing for independent initial conditions of the agents, and state-dependent coupling. 

We obtain two primary results. The first is that less synchronization, when it still leads to reliable convergence of the quorum variable, results in the best generalization capabilities of the learned network. This is similar to the results of the model experiments performed in Sec.~\ref{ssec:klein_sims}, though those experiments revealed this to be true for general optimization rather than generalization. The observation of better generalization with reduced synchronization is in line with the comments of Sec.~\ref{ssec:sync_noise_disc} regarding noise and generalization in deep networks. 

Our second primary result is the observation of an interesting regularizing property of EASGD, even in the single-agent case. Unlike QSGD, with a single agent EASGD does not reduce to standard SGD. We find that EASGD without momentum outperforms SGD with momentum and EASGD with momentum in the non-distributed setting.

\subsection{Experimental setup}
We utilize a three-layer convolutional neural network based on the experiments in \citet{EASGD}; each layer consists of a two-dimensional convolution, a ReLU nonlinearity, $2\times 2$ max-pooling with a stride of two, and BatchNorm \citep{batchnorm} with batch statistics in both training and evaluation. The first convolutional layer has kernel size nine, the second has kernel size five, and the third has kernel size three. All convolutions use a stride of one and zero padding. Following the three convolutional layers there is a single fully-connected layer to which we apply dropout with a probability of $0.5$. The input data is normalized to have mean zero and standard deviation one in each channel in both the training and test sets. Because we are interested in qualitative trends rather than state of the art performance, we do not employ any data augmentation strategies. We use an 80/20 training/validation set split, and we use the cross-entropy loss. The stochastic gradient is computed using mini-batches of size 128. The learning rate is set to $\eta = 0.05$ initially unless otherwise specified. This value was chosen as the highest initial value of $\eta$ that remained stable throughout training for most values of $p$, and the qualitative trends presented here were robust to the choice of learning rate (further simulations demonstrating this robustness are available in the SI). We decrease the learning rate three times when the validation loss stalls\footnote{\normalsize More precisely, we keep track of the validation loss for each agent at a reference point, beginning with the validation loss at the first epoch. If the validation loss at the next epoch changes by greater than $1\%$ of the reference point, the reference loss is set to the newly computed validation loss. If the validation loss changes by less than $1\%$, the reference point is unchanged. When the reference point has been unchanged for five epochs, we decrease the learning rate.}: first by a factor of five, then a factor of two the second and third times. This is done on an agent basis, i.e., the agents are allowed to maintain different learning rates. As we are focused on the behavior of the algorithms, rather than efficiency from the standpoint of a parallel implementation, the agents communicate with the quorum variable after each update.

In all methods, we use a Nesterov-based momentum scheme unless otherwise specified \citep{nest-mom, nesterov-conv} with a momentum parameter $\delta = 0.9$ unless otherwise specified and coupling only in the position variables. For EASGD, this takes the form \citep{EASGD},
\begin{align*}
    \bv^i_{t+1} &= \delta \bv^i_t - \eta_i \bg(\bx^i_t + \delta \bv^i_t),\\
    \bx^i_{t+1} &= \bx^i_t + \bv^i_{t+1} + \eta_i k (\txb_t - \bx^i_t),\\
    \txb_{t+1} &= \txb_t + k \sum_i \eta_i (\bx^i_t - \txb_t),
\end{align*}
where $\bg$ is the stochastic gradient. The equivalent form for QSGD can be obtained by the replacement $\txb_t \rightarrow \comb_t$ and by dropping the dynamics for $\txb$. The update for SD-QSGD is similar,
\begin{align*}
    \bv^i_{t+1} &= \delta \bv^i_t - \eta_i \bg(\bx^i_t + \delta \bv^i_t),\\
    \bx^i_{t+1} &= \bx^i_t + \bv^i_{t+1} + \eta_i \left(\sum_j k_j(\bx^j, t)\bx^j - \bx^i \sum_j k_j(\bx^j, t)\right).
\end{align*}
In SD-QSGD, we use state-dependent gains $k_j = k_j(\bx^j, t)$ inspired by a spiking winner-take-all formalism \citep{slot-spike, deneve}. At the start of each epoch, we find the agent with the current minimum validation loss value. Denoting the index of this agent by $j^*$, we define
\begin{align}
    \label{eqn:sd_spike_1}
    k_{j*} &= \frac{k}{p} + (M p -1)\frac{k}{p} e^{-t/\tau},\\
    k_j &= \frac{k}{p}\left( \frac{e^{t/\tau} - 1}{e^{t_f/\tau}-1} \right) \text{\ \ \ \ for $k_j \neq k_{j^*}$},
    \label{eqn:sd_spike_2}
\end{align}
for $t < t_f$, with $k$, $\tau$, $t_f$ and $M \geq 1$ fixed constants, and where $t$ is reset to zero at the start of each epoch. (\ref{eqn:sd_spike_1}) and (\ref{eqn:sd_spike_2}) shape the quorum variable to be entirely composed of the single best agent instantaneously at the start of an epoch. The constant $M$ is a magnification factor and sets the size of the force all other agents feel in the direction of the best agent. The gains relax exponentially back to the QSGD formalism, which is obtained when $k_j = k/p$ for all $j$. The constant $\tau$ sets the speed of relaxation and $t_f$ defines the duration of the spike. At $t = t_f$, all $k_j$ will have relaxed back to the original value $\frac{k}{p}$ for all $j \neq j^*$, and with proper choice of $\tau$, $k_{j^*}$ will be very close. We introduce a small discontinuity measured by the magnitude of $(Mp-1)\frac{k}{p}e^{-t_f/\tau}$ and simply set $k_{j^*} = \frac{k}{p}$ at $t = t_f$. We use a value of $M=10$, choose $t_f = N_b/4$ where $N_b$ is the number of batches in an epoch, and choose $\tau = t_f/16$, corresponding to a rather rapid spike.\footnote{\normalsize Another option would be to set $k_j = (k - k_{j^*})/(p-1)$ when this is positive, and zero otherwise. This ensures, outside of the initial spiking period, that the total sum of the $k_j$ is constant. We found similar empirical results with both choices.}

In each of the following simulations, the fully connected weights and biases are initialized randomly and uniformly $W_{ij},\ b_i \sim \mathcal{U}(-\frac{1}{\sqrt{m}}, \frac{1}{\sqrt{m}})$ where $m$ is the number of inputs. The convolutional weights use Kaiming initialization \citep{kaim_init}. In each comparison, the methods are initialized from the same points in parameter space, but the agents are not required to be initialized at the same location. In QSGD and SD-QSGD, the quorum variable is exponentially weighted $\bar{\mathbf{x}}_{t+1} = \gamma \comb_t + (1-\gamma) \bar{\bx}_t$ with $\gamma = .1$, and we test convergence of $\bar{\bx}$. Note that because this variable is not coupled to the dynamics of the individual agents, this is still distinct from EASGD. Because we use momentum in nearly all experiments, we will refer simply to QSGD and EASGD. The non-momentum variant of EASGD, when used, will be referred to as EASGD-WM (EASGD without momentum).

\subsection{Experimental Results}

We first analyze the effect of $k$ on classification performance. We find that the best performance is obtained for the lowest possible fixed values of $k$ that still lead to convergence of the quorum variable. This is demonstrated in Fig.~\ref{fig:k_vals} for the EASGD algorithm with $\eta = 0.05$ initially and $p=8$, where we observe the general trend that test accuracy improves as the coupling gain is decreased. Note that $k=0.01$ and $k=0.02$, as well as $k=0$ (not shown) have too little synchronization for the quorum variable to reflect a meaningful average, and hence do not lead to good performance. Similar results hold for QSGD (not shown). We found not only the best performance for low, fixed $k$, but also the best scaling with the number of agents\footnote{\normalsize The improvement in test accuracy and in the minimization of the test loss with increasing number of agents is demonstrated in later plots. We found that this trend was maximized with lower values of $k$.}. 

There are several plausible explanations for the observation of improved generalization with reduced coupling. Lower values of $k$ allow for greater exploration of the optimization landscape, which intuitively should lead to better performance. As the measure of synchronization in Fig.~\ref{fig:k_vals}(d) tends to zero, the $\beps$ term in the $\comb$ dynamics will also tend to zero, and synchronization will begin to reduce the amount of noise felt by the individual agents. In neural networks, it is expected that this noise reduction will favor convergence to minima that do not generalize as well as those obtained with higher amounts of noise, as is seen in Fig.~\ref{fig:k_vals}(c).

\begin{figure}
    \centering
    \includegraphics[width=\textwidth]{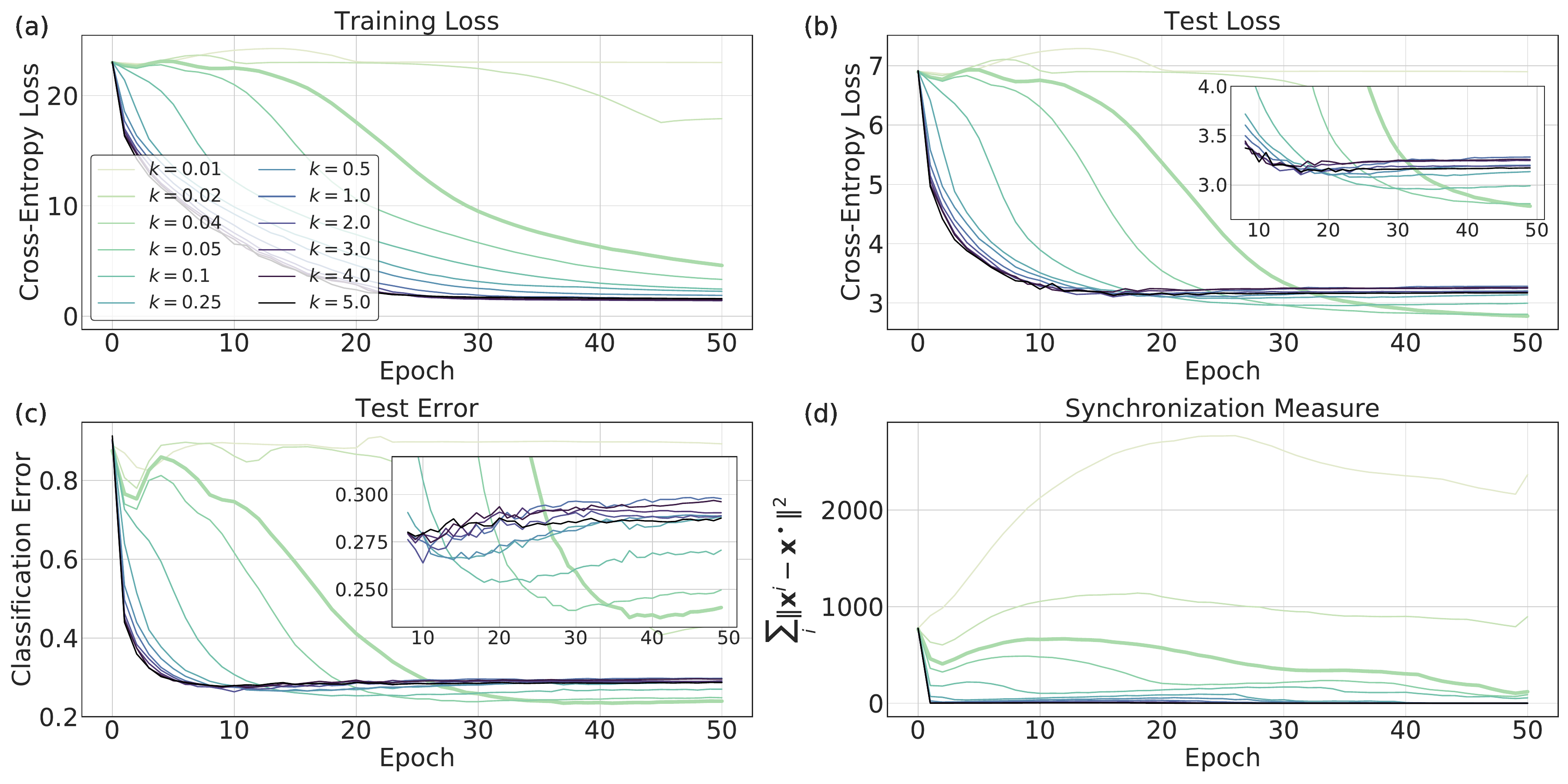}
    \caption{\normalsize The effect of varying $k$ on the learning procedure for the EASGD algorithm with $\eta = 0.05$ initially and $p=8$. In general, lower test errors and lower test loss values are seen for lower values of $k$, so long as convergence is still obtained. $k=0.01$ and $k=0.02$ have too little synchronization for the quorum variable to represent a meaningful average. Insets display a more fine-grained view near the end of learning. The best-performing curve is shown in bold. Best viewed in color}
    \label{fig:k_vals}
\end{figure}

Results for a comparison of QSGD and SD-QSGD are shown in Fig.~\ref{fig:sd} for $p=1, 4, 8, 16, 32$, and $64$ with $k=0.04$. QSGD is shown in solid lines while SD-QSGD is shown in dashed; color indicates the number of agents (see legend in Fig.~\ref{fig:sd}(a)). Note that $p=1$ simply corresponds to SGD for both SD-QSGD and QSGD, as the coupling term vanishes for a single agent. In both cases, we see significant improvement in accuracy as the number of agents increases, most likely due to an improved ability of the agents to explore the landscape along with a decrease in synchronization. The test loss and test error curves display interesting differences between the two algorithms: for $p =8$ and $p=16$, the state-dependent formalism obtains mildly improved generalization relative to QSGD, as expected by the bias towards minima with lower validation loss. QSGD performs better for $p=32$ and $p=64$; SD-QSGD does not converge for $p=64$.

\begin{figure}
    \centering
    \includegraphics[width=\textwidth]{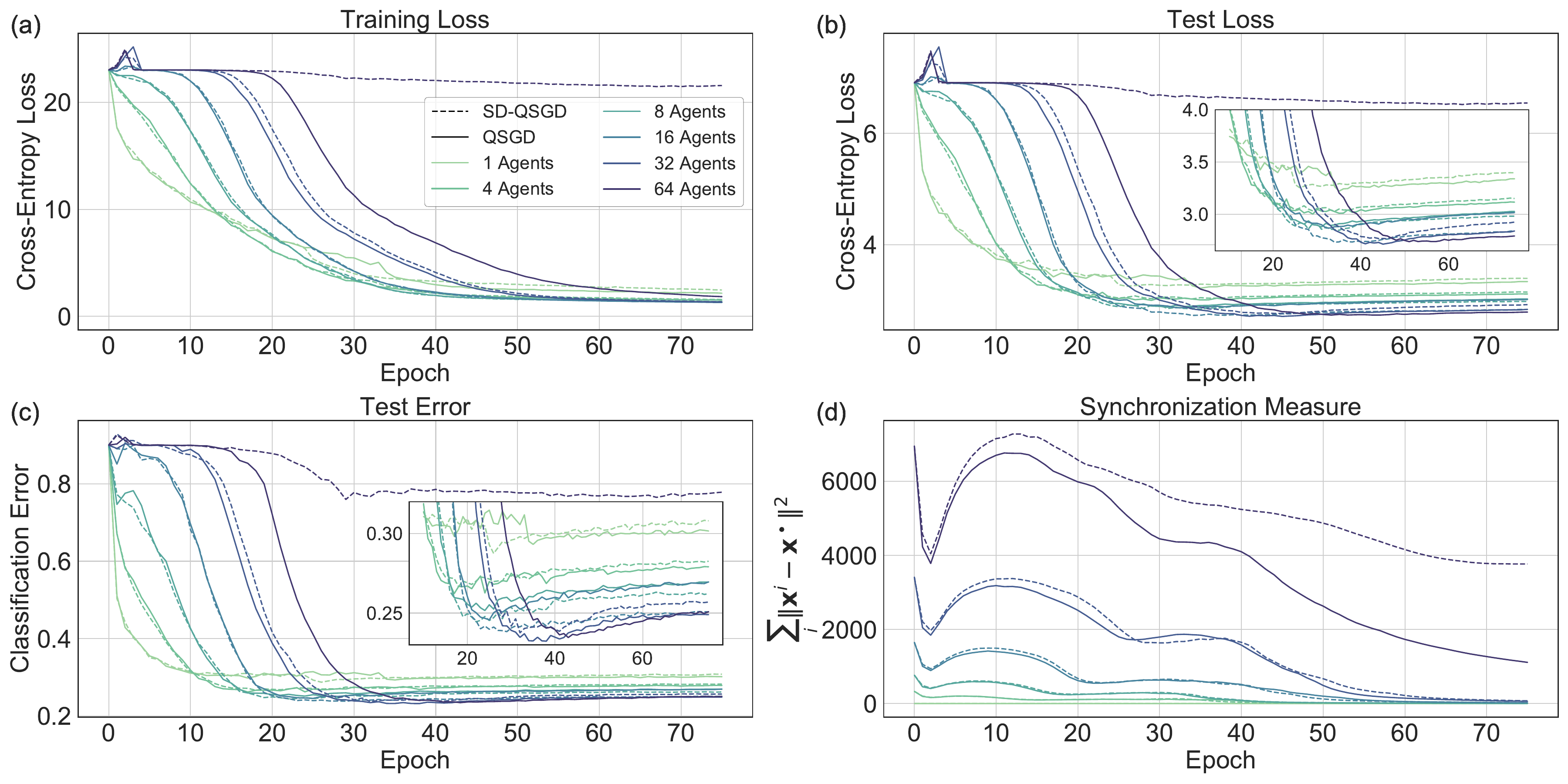}
    \caption{\normalsize A comparison of SD-QSGD using a spiking winner-take-all formalism (see text) to QSGD with a value of $k=0.04$. The state-dependent formalism obtains improved accuracy for the intermediate values of $p=8$ and $p=16$. QSGD and SD-QSGD perform similarly for $p=4$ and QSGD performs better for $p=32$. SD-QSGD does not converge for $p=64$ while QSGD does. Insets display a more fine-grained view near the end of learning. Best viewed in color.}
    \label{fig:sd}
\end{figure}

We display a comparison of QSGD and EASGD in Fig.~\ref{fig:qsgd_easgd_fac1}, again for $k=0.04$. QSGD tends to decrease the training loss further and more rapidly than EASGD; this is in line with earlier comments that, from an optimization perspective, the extra dynamics of the quorum variable offer no clear theoretical benefit. However, consistently across all experiments except for $p=16$ where it does not converge, EASGD generalizes better: the test loss is driven lower, and the test accuracy is higher than QSGD. A particularly interesting result is the single-agent case, where EASGD actually performs better than SGD with momentum\footnote{\normalsize Note that unlike QSGD with a single agent, EASGD with a single agent is a different algorithm than basic SGD. It can be seen as SGD coupled in feedback to a low-pass filter of its output.}. These observations suggest that the extra dynamics of the quorum variable may impose a form of implicit regularization which, to our knowledge, has not been observed before.

\begin{figure}
    \centering
    \includegraphics[width=\textwidth]{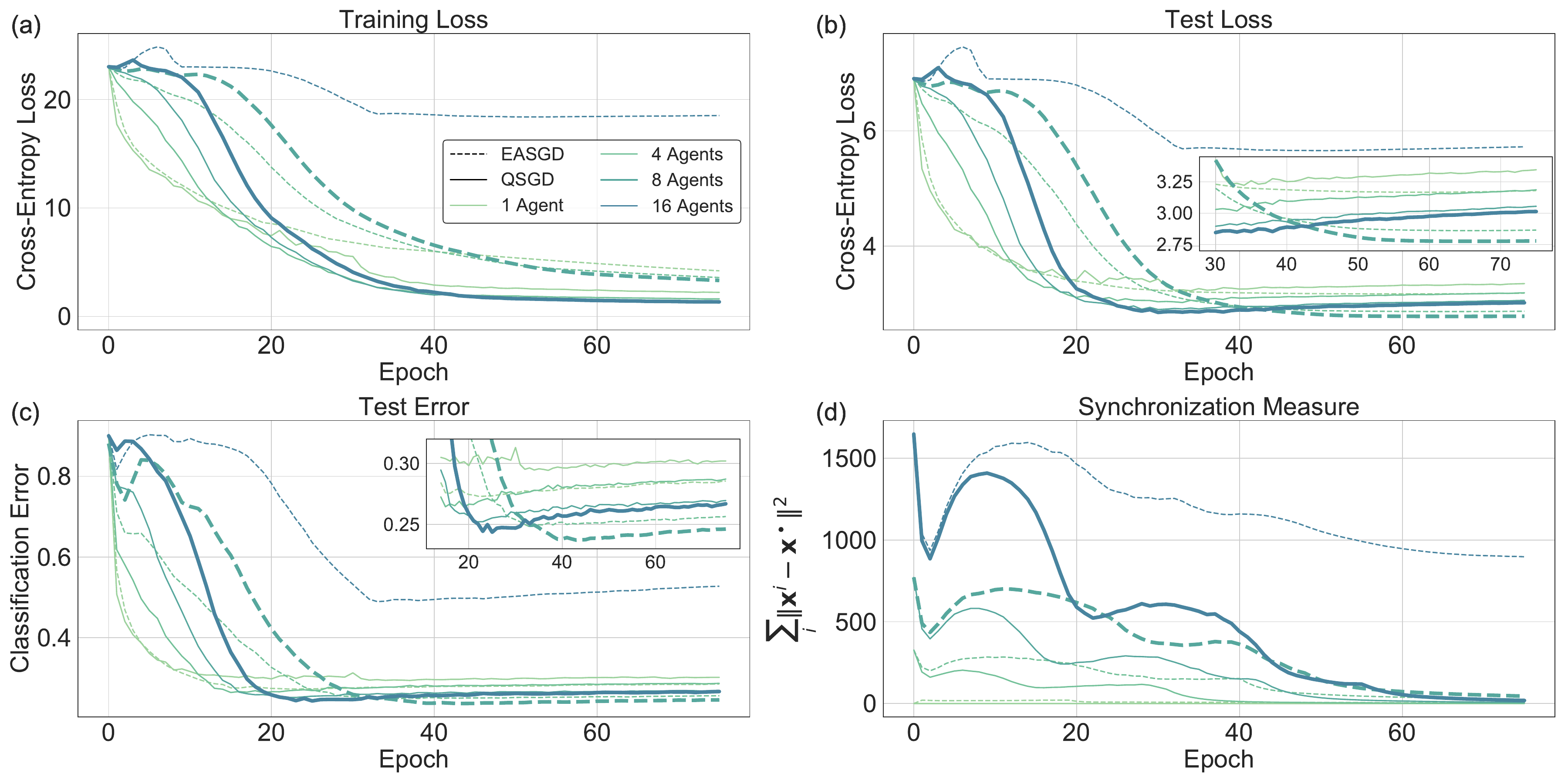}
    \caption{\normalsize A comparison of the performance of QSGD to EASGD with $k=0.04$ (see text). QSGD optimizes the training loss further and faster than EASGD, leading to overfitting. The two algorithms respond differently to fixed $k$ and have different levels of synchronization. For $p=16$, EASGD fails to converge, though QSGD continues to converge. Nevertheless, for fewer agents EASGD obtains improved performance. Insets display a more fine-grained view near the end of learning. The best-performing curves for each algorithm are shown in bold. Best viewed in color.}
    \label{fig:qsgd_easgd_fac1}
\end{figure}

Motivated by this observation, we now compare the $p=1$ EASGD algorithm with momentum, without momentum, and basic SGD with momentum in Fig.~\ref{fig:p1comp} across a range of initial learning rates. Each algorithm is initialized from the same location and each curve represents an average over three runs to eliminate stochastic variability. The momentum algorithms use $\delta = 0.9$ and the two EASGD variants use $k=0.054$. In general, EASGD with and without momentum (dashed and solid lines respectively) both achieve higher test accuracy than SGD with momentum (dotted lines). Surprisingly, EASGD without momentum often performs better than EASGD with momentum. 

To show that this trend is not an artifact of incorrectly choosing the momentum parameter, we have compiled additional data in Tab.~\ref{tab:delta_rslts} over a range of momentum parameters and learning rates. Each data point reported is again the result of an average over three independent runs, and each algorithm is initialized from the same location in each run. For simplicity, we simply report the testing loss and testing error, rather than the results on the training data. For all but one choice of $\eta$ and $\delta$, EASGD-WM outperforms both EASGD and MSGD in classification accuracy, demonstrating that the trend is robust to choice of learning rate and momentum value.

Much like SGD with momentum, single-agent EASGD-WM is a second-order system in-time. It also maintains a similar computational complexity, and only requires storing one extra set of parameters for the quorum variable.

\begin{figure}
        \begin{tabular}{cc}
        \begin{subfigure}{.46\textwidth}
            \centering
            \includegraphics[width=\textwidth]{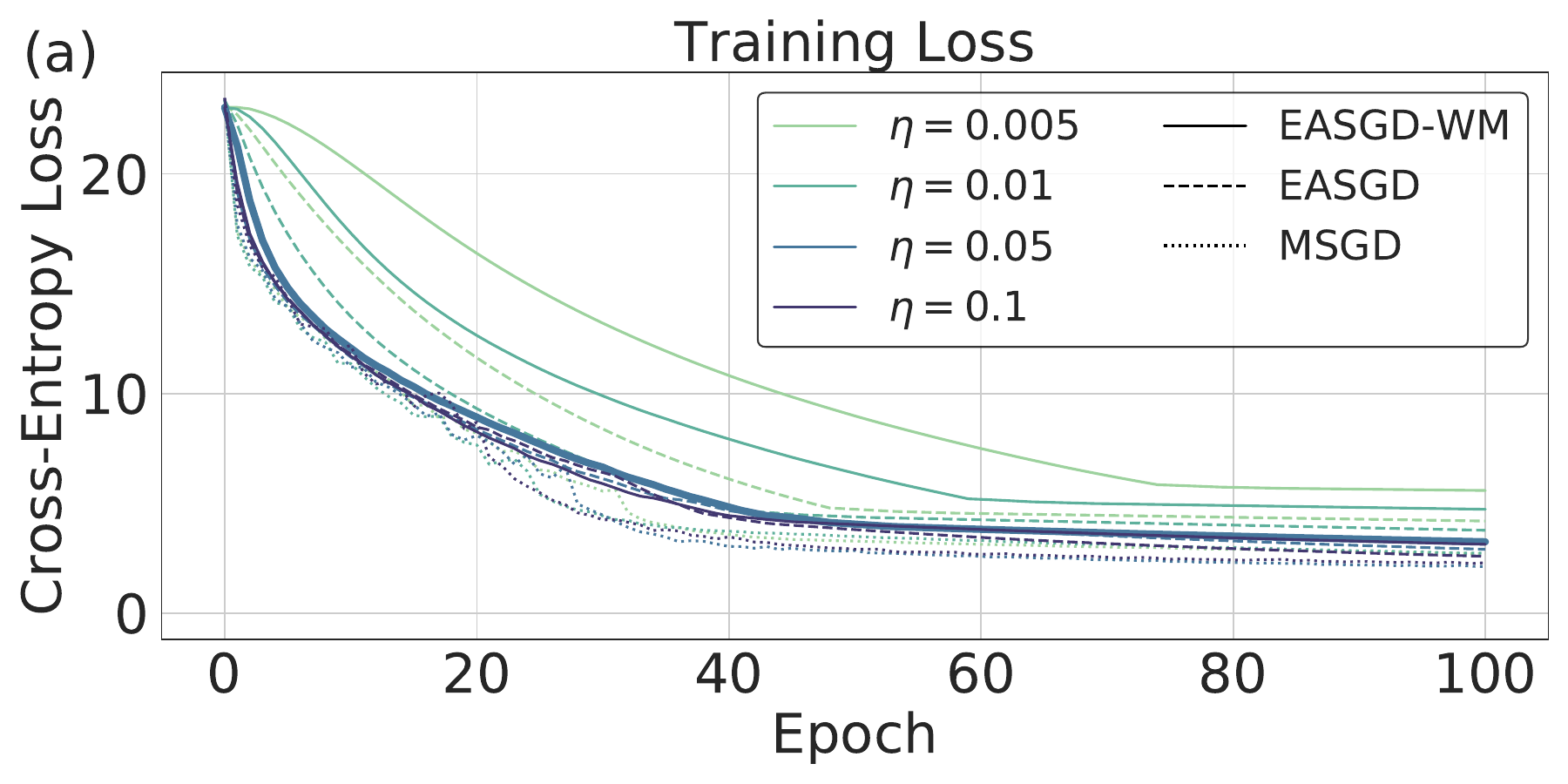}
        \end{subfigure} &
        \begin{subfigure}{.46\textwidth}
            \centering
            \includegraphics[width=\textwidth]{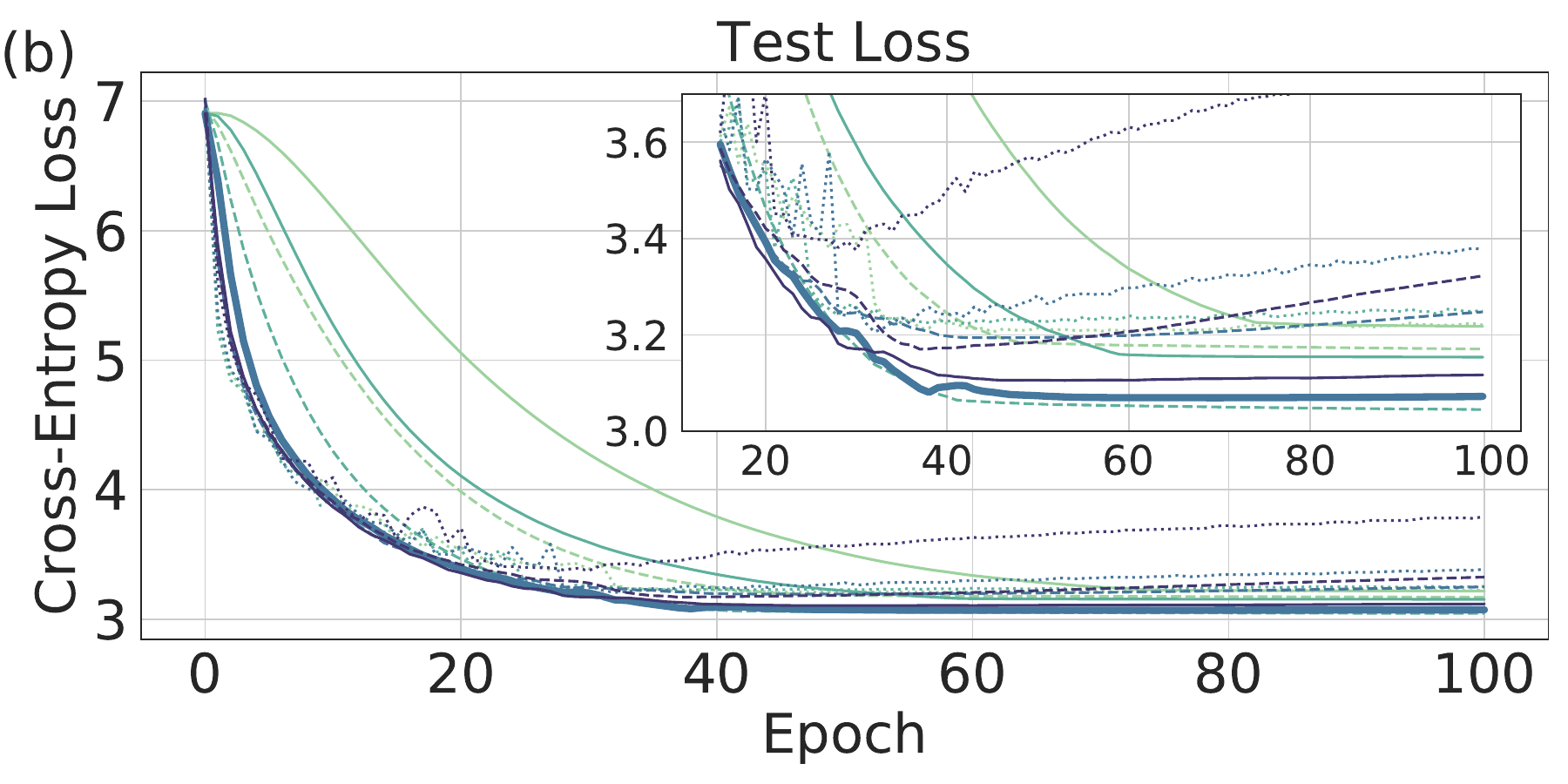}
        \end{subfigure} \\
        \multicolumn{2}{c}{
            \begin{subfigure}{.46\textwidth}
                \centering
                \includegraphics[width=\textwidth]{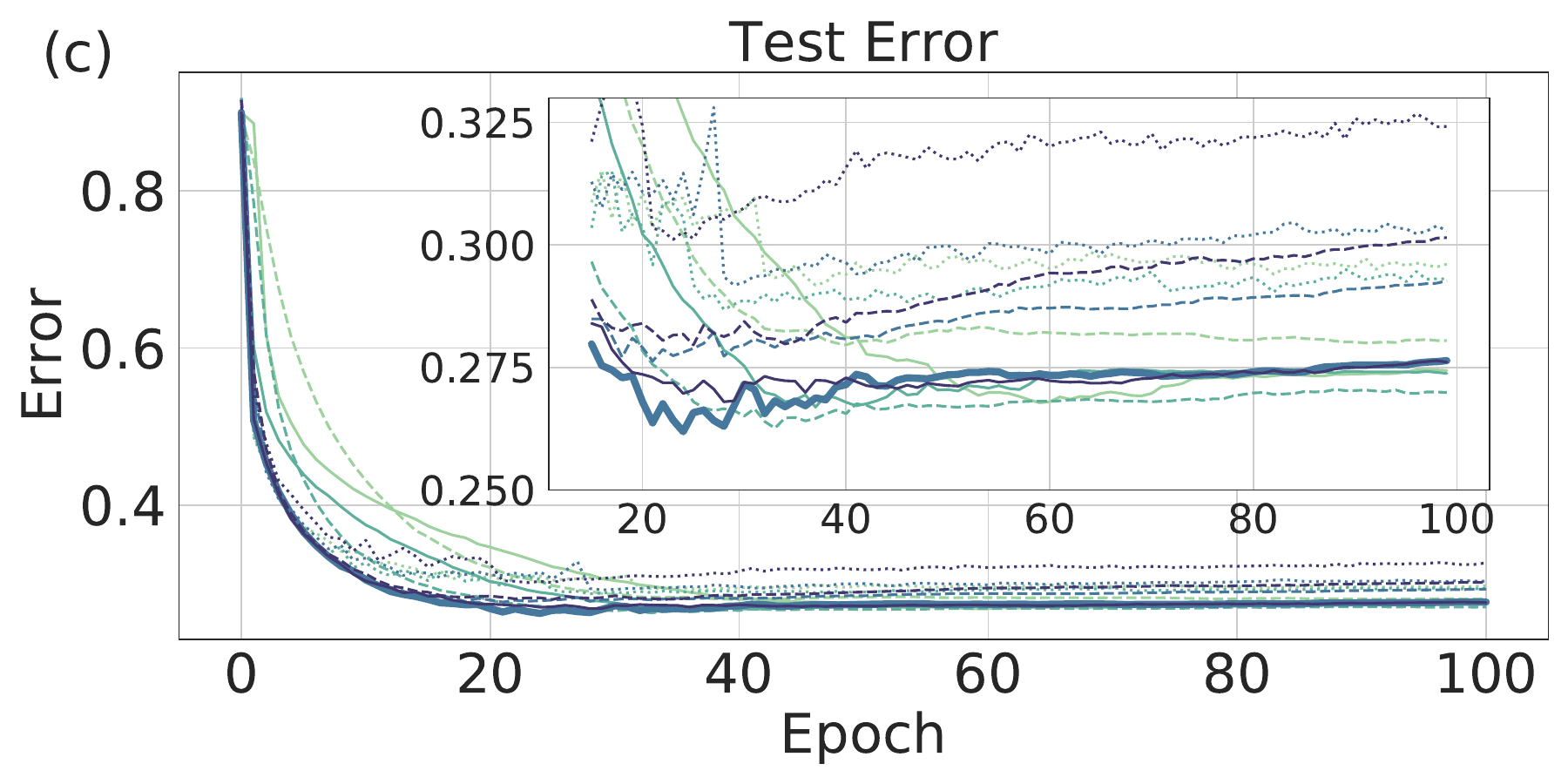}
            \end{subfigure}}\\
    \end{tabular}
    \caption{\normalsize A comparison of EASGD, EASGD without momentum (EASGD-WM) and SGD with momentum (MSGD) over a range of learning rates for momentum parameter $\delta = 0.9$ and coupling gain $k=0.054$. Surprisingly, EASGD and EASGD-WM perform better than MSGD in general, and in many cases, EASGD-WM performs better than EASGD. This motivates considering alternative dynamics for the quorum variable even for non-distributed optimization. Insets display a more fine-grained view near the end of learning. The best-performing curve is shown in bold. Best viewed in color.}
    \label{fig:p1comp}
\end{figure}

Indeed, this motivates a new class of second-order in-time algorithms for non-distributed optimization given by the feedback interconnection
\begin{align}
    \dot{\bx}  &= -\nabla f(\bx) + k (\txb - \bx),\\
    \dot{\txb} &= \mathbf{g}(\txb, \bx),
    \label{eqn:gen_filt}
\end{align}
where $\mathbf{g}$ represents arbitrary dynamics for the quorum variable \citep{russo-slot}, and in general might be chosen as a nonlinear filter. The simple linear filter $\mathbf{g}(\txb, \bx) = k(\bx - \txb)$ recovers EASGD. Fig.~\ref{fig:qsgd_easgd_fac1} shows that, while EASGD obtains better performance than QSGD, QSGD maintains better stability properties. Designing nonlinear filters $\mathbf{g}$ that can combine the regularization of EASGD with the stability of QSGD is an interesting direction of future research.

\begin{table}[]
\resizebox{\textwidth}{!}{%
\begin{tabular}{llllllllllllll}
\cline{3-14}
                                                & \multicolumn{1}{l|}{}         & \multicolumn{6}{c|}{Minimum Test Loss}                                                                                                                                       & \multicolumn{6}{c|}{Minimum Error}                                                                                                                                           \\ \cline{3-14} 
                                                & \multicolumn{1}{l|}{}         & \multicolumn{1}{l|}{$\delta=.1$} & \multicolumn{1}{l|}{$\delta=.25$} & \multicolumn{1}{l|}{$\delta=.5$} & \multicolumn{1}{l|}{$\delta=.75$} & \multicolumn{1}{l|}{$\delta=.9$} & \multicolumn{1}{l|}{$\delta=.99$} & \multicolumn{1}{l|}{$\delta=.1$} & \multicolumn{1}{l|}{$\delta=.25$} & \multicolumn{1}{l|}{$\delta=.5$} & \multicolumn{1}{l|}{$\delta=.75$} & \multicolumn{1}{l|}{$\delta=.9$} & \multicolumn{1}{l|}{$\delta=.99$} \\ \hline
\multicolumn{1}{|l|}{\multirow{3}{*}{$\eta=.005$}} & \multicolumn{1}{l|}{EASGD-WM} & \multicolumn{1}{l|}{\textbf{4.25}} & \multicolumn{1}{l|}{\textbf{4.26}}  & \multicolumn{1}{l|}{\textbf{4.28}} & \multicolumn{1}{l|}{4.29}  & \multicolumn{1}{l|}{3.22} & \multicolumn{1}{l|}{3.24}  & \multicolumn{1}{l|}{\textbf{.267}} & \multicolumn{1}{l|}{\textbf{.266}}  & \multicolumn{1}{l|}{\textbf{.264}} & \multicolumn{1}{l|}{\textbf{.269}}  & \multicolumn{1}{l|}{\textbf{.268}} & \multicolumn{1}{l|}{\textbf{.270}}  \\ \cline{2-14} 
\multicolumn{1}{|l|}{}                          & \multicolumn{1}{l|}{EASGD}    & \multicolumn{1}{l|}{4.72} & \multicolumn{1}{l|}{4.83}  & \multicolumn{1}{l|}{4.56} & \multicolumn{1}{l|}{\textbf{4.28}}  & \multicolumn{1}{l|}{\textbf{3.17}} & \multicolumn{1}{l|}{\textbf{3.11}}  & \multicolumn{1}{l|}{.304} & \multicolumn{1}{l|}{.313}  & \multicolumn{1}{l|}{.301} & \multicolumn{1}{l|}{.282}  & \multicolumn{1}{l|}{.280} & \multicolumn{1}{l|}{.277}  \\ \cline{2-14} 
\multicolumn{1}{|l|}{}                          & \multicolumn{1}{l|}{MSGD}     & \multicolumn{1}{l|}{4.75} & \multicolumn{1}{l|}{4.87}  & \multicolumn{1}{l|}{4.64} & \multicolumn{1}{l|}{4.33}  & \multicolumn{1}{l|}{3.21} & \multicolumn{1}{l|}{3.29}  & \multicolumn{1}{l|}{.310} & \multicolumn{1}{l|}{.323}  & \multicolumn{1}{l|}{.306} & \multicolumn{1}{l|}{.286}  & \multicolumn{1}{l|}{.292} & \multicolumn{1}{l|}{.295}  \\ \hline
                                                &                               &                           &                            &                           &                            &                           &                            &                           &                            &                           &                            &                           &                            \\ \hline
\multicolumn{1}{|l|}{\multirow{3}{*}{$\eta=.01$}}  & \multicolumn{1}{l|}{EASGD-WM} & \multicolumn{1}{l|}{\textbf{4.09}} & \multicolumn{1}{l|}{\textbf{5.15}}  & \multicolumn{1}{l|}{\textbf{4.03}} & \multicolumn{1}{l|}{\textbf{4.12}}  & \multicolumn{1}{l|}{3.15} & \multicolumn{1}{l|}{\textbf{3.10}}  & \multicolumn{1}{l|}{\textbf{.262}} & \multicolumn{1}{l|}{\textbf{.259}}  & \multicolumn{1}{l|}{\textbf{.253}} & \multicolumn{1}{l|}{\textbf{.261}}  & \multicolumn{1}{l|}{.267} & \multicolumn{1}{l|}{\textbf{.257}}  \\ \cline{2-14} 
\multicolumn{1}{|l|}{}                          & \multicolumn{1}{l|}{EASGD}    & \multicolumn{1}{l|}{4.57} & \multicolumn{1}{l|}{5.75}  & \multicolumn{1}{l|}{4.27} & \multicolumn{1}{l|}{4.14}  & \multicolumn{1}{l|}{\textbf{3.04}} & \multicolumn{1}{l|}{3.22}  & \multicolumn{1}{l|}{.297} & \multicolumn{1}{l|}{.300}  & \multicolumn{1}{l|}{.280} & \multicolumn{1}{l|}{.275}  & \multicolumn{1}{l|}{\textbf{.263}} & \multicolumn{1}{l|}{.283}  \\ \cline{2-14} 
\multicolumn{1}{|l|}{}                          & \multicolumn{1}{l|}{MSGD}     & \multicolumn{1}{l|}{4.59} & \multicolumn{1}{l|}{5.81}  & \multicolumn{1}{l|}{4.48} & \multicolumn{1}{l|}{4.46}  & \multicolumn{1}{l|}{3.22} & \multicolumn{1}{l|}{3.33}  & \multicolumn{1}{l|}{.300} & \multicolumn{1}{l|}{.307}  & \multicolumn{1}{l|}{.294} & \multicolumn{1}{l|}{.294}  & \multicolumn{1}{l|}{.287} & \multicolumn{1}{l|}{.301}  \\ \hline
                                                &                               &                           &                            &                           &                            &                           &                            &                           &                            &                           &                            &                           &                            \\ \hline
\multicolumn{1}{|l|}{\multirow{3}{*}{$\eta=.05$}}  & \multicolumn{1}{l|}{EASGD-WM} & \multicolumn{1}{l|}{\textbf{3.95}} & \multicolumn{1}{l|}{\textbf{3.96}}  & \multicolumn{1}{l|}{\textbf{3.86}} & \multicolumn{1}{l|}{\textbf{3.97}}  & \multicolumn{1}{l|}{\textbf{3.07}} & \multicolumn{1}{l|}{\textbf{3.00}}  & \multicolumn{1}{l|}{\textbf{.252}} & \multicolumn{1}{l|}{\textbf{.258}}  & \multicolumn{1}{l|}{\textbf{.250}} & \multicolumn{1}{l|}{\textbf{.255}}  & \multicolumn{1}{l|}{\textbf{.262}} & \multicolumn{1}{l|}{\textbf{.253}}  \\ \cline{2-14} 
\multicolumn{1}{|l|}{}                          & \multicolumn{1}{l|}{EASGD}    & \multicolumn{1}{l|}{4.41} & \multicolumn{1}{l|}{4.27}  & \multicolumn{1}{l|}{4.05} & \multicolumn{1}{l|}{4.06}  & \multicolumn{1}{l|}{3.19} & \multicolumn{1}{l|}{4.04}  & \multicolumn{1}{l|}{.286} & \multicolumn{1}{l|}{.283}  & \multicolumn{1}{l|}{.265} & \multicolumn{1}{l|}{.267}  & \multicolumn{1}{l|}{.276} & \multicolumn{1}{l|}{.417}  \\ \cline{2-14} 
\multicolumn{1}{|l|}{}                          & \multicolumn{1}{l|}{MSGD}     & \multicolumn{1}{l|}{4.46} & \multicolumn{1}{l|}{4.57}  & \multicolumn{1}{l|}{4.48} & \multicolumn{1}{l|}{4.43}  & \multicolumn{1}{l|}{3.21} & \multicolumn{1}{l|}{6.91}  & \multicolumn{1}{l|}{.294} & \multicolumn{1}{l|}{.307}  & \multicolumn{1}{l|}{.295} & \multicolumn{1}{l|}{.290}  & \multicolumn{1}{l|}{.292} & \multicolumn{1}{l|}{0.9}   \\ \hline
                                                &                               &                           &                            &                           &                            &                           &                            &                           &                            &                           &                            &                           &                            \\ \hline
\multicolumn{1}{|l|}{\multirow{3}{*}{$\eta=.1$}}   & \multicolumn{1}{l|}{EASGD-WM} & \multicolumn{1}{l|}{\textbf{4.08}} & \multicolumn{1}{l|}{\textbf{4.01}}  & \multicolumn{1}{l|}{\textbf{4.04}} & \multicolumn{1}{l|}{\textbf{4.05}}  & \multicolumn{1}{l|}{\textbf{3.11}} & \multicolumn{1}{l|}{\textbf{3.15}}  & \multicolumn{1}{l|}{\textbf{.267}} & \multicolumn{1}{l|}{\textbf{.264}}  & \multicolumn{1}{l|}{\textbf{.268}} & \multicolumn{1}{l|}{\textbf{.265}}  & \multicolumn{1}{l|}{\textbf{.268}} & \multicolumn{1}{l|}{\textbf{.269}}  \\ \cline{2-14} 
\multicolumn{1}{|l|}{}                          & \multicolumn{1}{l|}{EASGD}    & \multicolumn{1}{l|}{4.24} & \multicolumn{1}{l|}{4.23}  & \multicolumn{1}{l|}{4.14} & \multicolumn{1}{l|}{4.13}  & \multicolumn{1}{l|}{3.17} & \multicolumn{1}{l|}{6.91}  & \multicolumn{1}{l|}{.282} & \multicolumn{1}{l|}{.283}  & \multicolumn{1}{l|}{.277} & \multicolumn{1}{l|}{.272}  & \multicolumn{1}{l|}{.280} & \multicolumn{1}{l|}{0.9}   \\ \cline{2-14} 
\multicolumn{1}{|l|}{}                          & \multicolumn{1}{l|}{MSGD}     & \multicolumn{1}{l|}{4.62} & \multicolumn{1}{l|}{4.55}  & \multicolumn{1}{l|}{4.22} & \multicolumn{1}{l|}{4.47}  & \multicolumn{1}{l|}{3.38} & \multicolumn{1}{l|}{6.91}  & \multicolumn{1}{l|}{.288} & \multicolumn{1}{l|}{.307}  & \multicolumn{1}{l|}{.287} & \multicolumn{1}{l|}{.288}  & \multicolumn{1}{l|}{.301} & \multicolumn{1}{l|}{0.9}   \\ \hline
\end{tabular}}
\vspace{5mm}
\caption{\normalsize Comparison of minimum test loss achieved, minimum error achieved, and minimum training loss achieved for EASGD-WM, EASGD, and MSGD on the CIFAR-10 dataset (each with $p=1$, providing details on the effect of hyperparameter choices not seen in Fig.~\ref{fig:p1comp}). Each experiment was run three times and the minimum was taken over the average trajectory. In each run, the algorithms were initialized from the same starting location. Surprisingly, EASGD-WM consistently achieves the lowest test error (all but one setting) and the lowest test loss (all but four settings) in comparison to EASGD and MSGD. For high learning rate and high $\delta$, MSGD and EASGD eventually run into convergence issues, while EASGD-WM does not (error of $.9$ and test loss of $6.91$ indicate convergence issues).}
\label{tab:delta_rslts}
\end{table}

Returning to the distributed case, Fig.~\ref{fig:qsgd_easgd_fac1}(d) shows that EASGD and QSGD respond differently to the choice of $k$\footnote{\normalsize Fig.~\ref{fig:qsgd_easgd_fac1}(d) shows the distance from $\txb$ for EASGD. The distance from $\comb$ for EASGD is nearly identical.}. EASGD is less synchronized than QSGD in all cases. Hence, in the context of Fig.~\ref{fig:k_vals}, a possible explanation for the improved performance of EASGD when compared to QSGD is simply the observation that it tends to remain less synchronized.

To answer this question, we use a scaling factor $k_{EASGD} = r\times k_{QSGD}$ to roughly match the levels of synchronization between EASGD and QSGD. Results for $r = 1.35$ are shown in Fig.~\ref{fig:qsgd_easgd_fac1p35}, and the synchronization curves are either approximately equal or EASGD remains more synchronized across all values of $p$. Additional values of $p=32$ and $p=64$ are shown, and EASGD now converges for all attempted values of $p < 64$. QSGD continues to perform worse than EASGD on the test data due to an increased tendency to overfit. As the number of agents is increased, QSGD improves up to $p=32$; $p=64$ obtains roughly the same test performance. EASGD improves up to around $p=16$ and does not converge for $p=64$ (see Fig.~\ref{fig:qsgd_easgd_fac1p35}(a) -- the curves in (b) and (d) are covered by the insets, but EASGD obtains roughly 55\% testing accuracy). In general, EASGD with $p$ agents obtains roughly the same performance as QSGD with $2p$ agents. Interestingly, Fig.~\ref{fig:qsgd_easgd_fac1p35}(d) shows that the high $p$ stability issues for EASGD are not simply due to a lack of synchronization, as EASGD actually remains more synchronized than QSGD for $p=64$ for much of the training time. We offer a simple possible explanation for these stability issues in the SI by analyzing discrete-time optimization of a one-dimensional quadratic objective. Another explanation is afforded by Thms.~\ref{thm:qsgd} and \ref{thm:easgd_converge}, which reveal poor scaling with $p$ of both terms in the bound for EASGD when compared to QSGD. Together, these observations highlight stability issues in both continuous- and discrete-time.

\begin{figure}
    \centering
    \includegraphics[width=\textwidth]{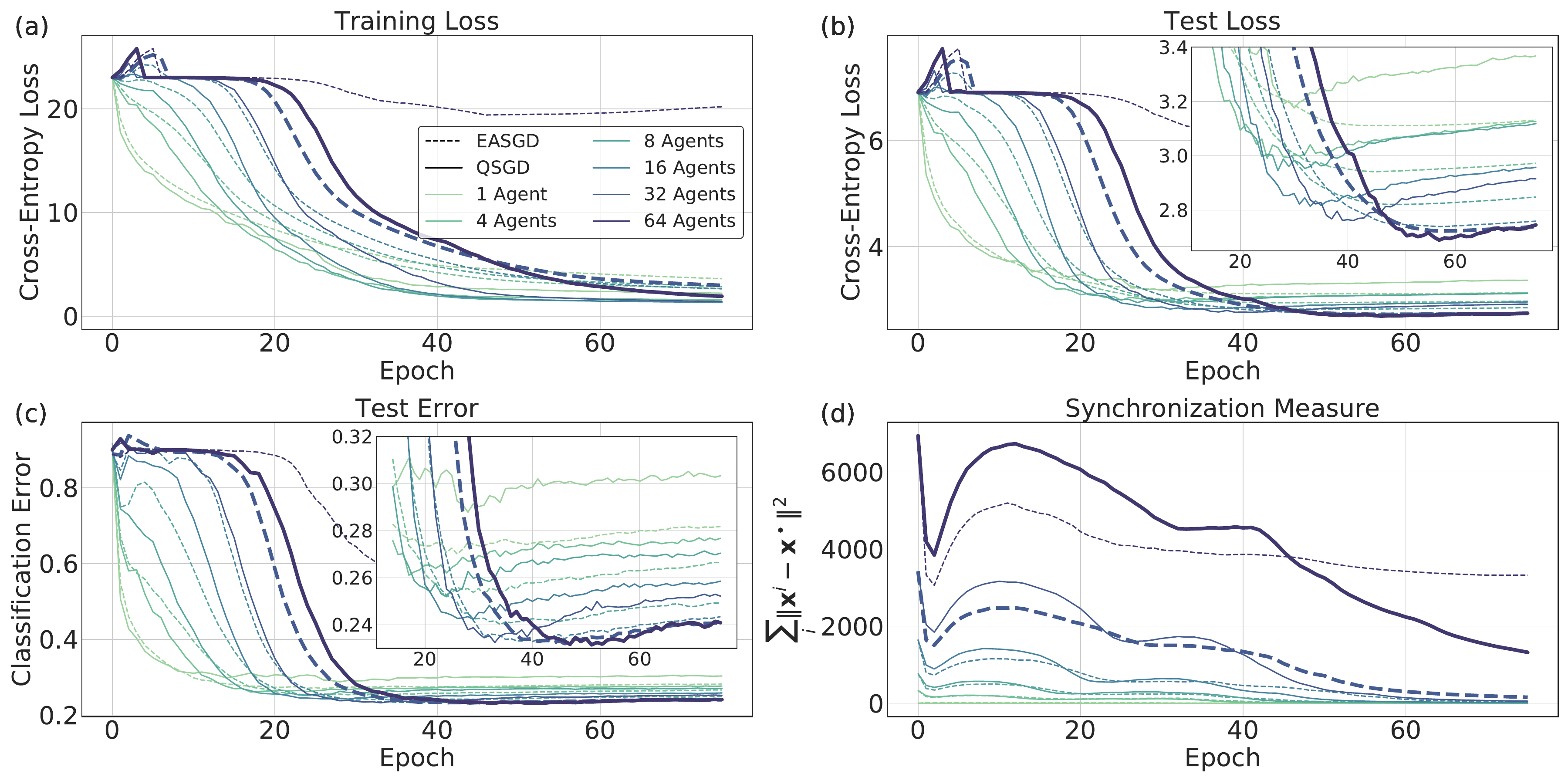}
    \caption{\normalsize A comparison of QSGD and EASGD with $k_{QSGD} = 0.04$ and $k_{EASGD} = r\times k_{QSGD}$ with $r=1.35$. In this case, EASGD converges and performs better for all values of $p$ up to $p=64$, where it again fails to converge. Nevertheless, performance for EASGD with $p=16$ and $p=32$ approximately matches that of QSGD with $p=64$. Insets display a more fine-grained view near the end of learning. The best-performing curves for each algorithm are shown in bold. Best viewed in color.}
    \label{fig:qsgd_easgd_fac1p35}
\end{figure}

As discussed in the text and the description of the experimental setup, our theory allows for the agents to be initialized in different locations, and for the agents to use distinct learning rates through individual learning rate schedules. In the original work on EASGD, it was postulated that starting the agents at different locations would break symmetry and lead to instability \citep{EASGD}. Similarly, a single learning rate was used for all agents. The above simulations demonstrate that starting from distinct locations and decreasing the learning rate on an individual basis is non-problematic. We show in Fig.~\ref{fig:one_loc} that starting from a single location leads to decreased performance. Surprisingly, Fig.~\ref{fig:one_loc} also highlights that initializing the agents from multiple locations is critical for optimal improvement as the number of agents is increased.

\begin{figure}
    \centering
    \includegraphics[width=\textwidth]{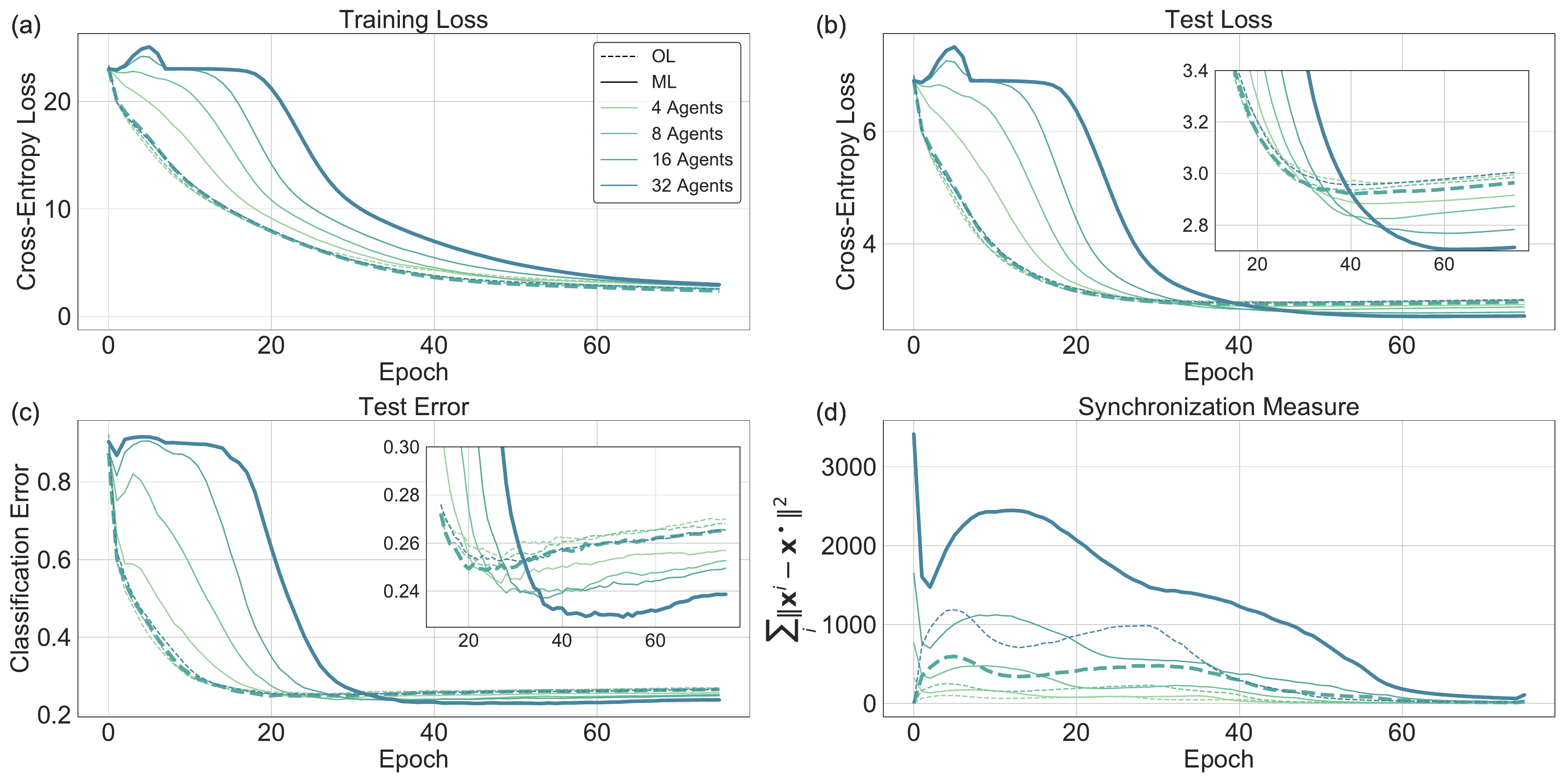}
    \caption{\normalsize A comparison between starting the agents from multiple locations (ML) and one location (OL) for EASGD with a value of $k=0.054$. Starting from multiple locations exhibits better test accuracy, lower test loss, and greater improvement as the number of agents is increased. Insets display a more fine-grained view near the end of learning. The best-performing curves for each setting are shown in bold. Best viewed in color.}
    \label{fig:one_loc}
\end{figure}

\section{Conclusion}
\label{sec:conc}
In this paper, we presented a continuous-time analysis of distributed stochastic gradient algorithms within the framework of stochastic nonlinear contraction theory. Through analogy with quorum sensing mechanisms, we analyzed the effect of synchronization of the individual SGD agents on the noise generated by their stochastic gradient approximations. We demonstrated that synchronization can effectively reduce the noise felt by each of the individual agents and by their spatial mean. We further demonstrated that synchronization can be seen to reduce the amount of smoothing imposed by SGD on the loss function. Through simulations on model non-convex optimization problems, we provided insight into how the distributed and coupled setting affects convergence to minima of the smoothed loss and the true loss. We introduced a new distributed algorithm, QSGD, and proved convergence results for a strongly convex objective for QSGD, QSGD with momentum, and EASGD. We further introduced a state-dependent variant of QSGD and constructed one specific example of the algorithm to show how the formalism can be used to bias exploration. We presented experiments on deep neural networks, and compared the properties of QSGD, SD-QSGD, and EASGD for generalization performance. We noted an interesting regularizing property of EASGD even in the single-agent case, and compared it to basic SGD with momentum, showing that it can lead to improved generalization. Research into similar higher-order in time optimization algorithms formed as coupled dynamical systems is an interesting direction of future work.

\section*{Acknowledgments}
N. M. Boffi was supported by a Department of Energy Computational Science Graduate Fellowship. We graciously thank the reviewers for helpful feedback and for suggestions to improve the work.

\appendix
\section{Interaction between synchronization and noise: extra quorum dynamics}
\label{ssec:sync_noise_analysis_EASGD}
We now provide a mathematical characterization of how synchronization reduces the noise felt by the agents with arbitrary quorum dynamics. This is a generalization of what was shown in Sec.~\ref{ssec:sync_noise_analysis}, and does not depend on the dynamics of the quorum variable. In addition to the assumptions stated in Sec.~\ref{ssec:assump}, we require that the gradient workers are stochastically contracting with rate $\lambda = k - \bal$ and bound $\frac{\eta}{b}C$, so that the synchronization condition (\ref{eqn:sync_com}) derived in Sec.~\ref{ssec:sync_measure} can be applied. For completeness, we consider,
\begin{align}
    d\bx^i &= \left(-\nabla f(\bx^i) + k \left(\txb - \bx^i\right)\right)dt + \sqrt{\frac{\eta}{b}}\bB(\bx^i) d\bW^i,\\
    \label{eqn:EASGD_cont_app_1}
    d\txb &= \mathbf{g}(\comb, \txb)dt.
\end{align}
As in the main text, we define $x^\bullet = \frac{1}{p}\sum_i x_i$. Adding up the stochastic dynamics in (\ref{eqn:EASGD_cont_app_1}), we find
\begin{equation}
    d\comb = \left[-\frac{1}{p}\sum_i \nabla f(\bx^i) + k (\txb - \comb)\right]dt + \sqrt{\frac{\eta}{bp^2}}\sum_i \bB(\bx^i) d\bW^i\nonumber.
    \label{eqn_app:com_ito_1}
\end{equation}
We then define
\begin{equation}
    \beps = -\frac{1}{p}\sum_i \nabla f(\bx^i) + \nabla f(\comb)\nonumber,
\end{equation}
so that we can rewrite
\begin{equation}
    d\comb = \left[-\nabla f(\comb) + \beps + k (\txb - \comb)\right]dt + \sqrt{\frac{\eta}{bp^2}}\sum_i \bB(\bx^i) d\bW^i\nonumber.
    \label{eqn_app:com_ito_2}
\end{equation}
Applying the Taylor formula with integral remainder to the components of the gradient $\left(-\nabla f(x)\right)_j$, we have, with $\bF_j$ denoting the gradient of $\left(-\nabla f(x)\right)_j$, and $\bH_j$ denoting its Hessian,
\begin{align}
    \left(-\nabla f(\bx^i)\right)_j &+ \left(\nabla f(\comb)\right)_j - \bF_j^T(\comb)(\bx^i - \comb) \nonumber\\
    &=\int_0^1 (1-s)\left(\bx^i - \comb\right)^T\bH_j\left((1-s)\bx^i + s \comb\right)\left(\bx^i - \comb\right)\nonumber.
    \label{eqn_app:bound}
\end{align}
Summing over $i$ and applying the assumed bound $\bH_j \leq Q\bI$ leads to the inequality
\begin{equation}
    \left|\sum_i \left[\left(-\nabla f(\bx^i)\right)_j + \left(\nabla f(\comb)\right)_j\right]\right| \leq \frac{Q}{2}\sum_i\Vert \bx^i - \comb\Vert^2\nonumber.
\end{equation}
The left-hand side of the above inequality is $p|\beps_j|$. Squaring both sides and summing over $j$ provides a bound on $p^2\Vert \beps\Vert^2$. Squaring both sides, performing this sum, noting that $j$ runs from $1$ to $n$, taking a square root, taking an expectation over the noise, and using the synchronization condition in (\ref{eqn:sync_com}), 
\begin{equation}
    \mathbb{E}\left[\Vert\beps\Vert\right] \leq \frac{(p-1) \eta Q C \sqrt{n}}{4 p b\left(k - \bal\right)}\nonumber.
    \label{eqn_app:eps_bound}
\end{equation}
As a sum of $p$ independent Gaussian random variables with mean zero and standard deviations $\frac{\eta}{b p^2}\bSig(\bx^i)$, the quantity
\begin{equation}
    \sqrt{\frac{\eta}{bp^2}}\sum_i \bB(\bx^i) d\bW^i = \sqrt{\frac{\eta}{bp^2}}\bT d\bW\nonumber
\end{equation}
can be rewritten as a single Gaussian random variable with $\bT\bT^T = \sum_i \bSig(\bx^i)$ as in the main text. Thus, for a given noise covariance $\bSig$ and corresponding bound $C$, the difference between the dynamics followed by $\comb$ and the noise-free dynamics 
\begin{align}
    \dot{\bx}^i_{nf} &= -\nabla f(\bx^i_{nf}) + k\left(\txb - \bx^i_{nf}\right)\nonumber,\\
    \dot{\txb} &= \mathbf{g}(\txb, \comb_{nf})\nonumber,
\end{align}
tends to zero almost surely as $k \rightarrow \infty$ and $p \rightarrow \infty$. The limit $k \rightarrow \infty$ is needed to increase the degree of synchronization to eliminate the effect of $\beps$ on $\comb$, while the limit $p \rightarrow \infty$ is needed to eliminate the effect of the additive noise.
\bibliography{refs}

\end{document}